\newtheorem{theorem}{Theorem}[section]
\newtheorem{lemma}[theorem]{Lemma}
\newtheorem{proposition}[theorem]{Proposition}
\newtheorem{corollary}[theorem]{Corollary}
\newtheorem{claim}[theorem]{Claim}
\newenvironment{proof}[1][Proof]{\noindent\textbf{#1.} }{\ \rule{0.5em}{0.5em}}
\newcommand{\R}{\mathbb{R}}
\newcommand{\Z}{\mathbb{Z}}
\newcommand{\N}{\mathbb{N}}
\newcommand{\M}{\mathcal{M}}
\newcommand{\K}{\mathcal{K}}
\begin{document}

\title{\bf Ground state solution for a class of indefinite variational problems with critical growth}

\author{Claudianor O. Alves\thanks{C. O. Alves was partially supported by CNPq/Brazil 301807/2013-2 and INCT-MAT, coalves@dme.ufcg.edu.br}\, , \, \ Geilson F. Germano \thanks{G. F. Germano was partially supported by CAPES, geilsongermano@hotmail.com}\vspace{2mm}
	\and {\small  Universidade Federal de Campina Grande} \\ {\small Unidade Acad\^emica de Matem\'{a}tica} \\ {\small CEP: 58429-900, Campina Grande - Pb, Brazil}\\}

\date{}
\maketitle

\begin{abstract}
In this paper we study the existence of ground state solution for an indefinite variational problem of the type  
$$
\left\{\begin{array}{l}
-\Delta u+(V(x)-W(x))u=f(x,u) \quad \mbox{in} \quad  \R^{N}, \\
u\in H^{1}(\R^{N}),
\end{array}\right. \eqno{(P)}
$$
where $N \geq 2$, $V,W:\mathbb{R}^N \to \mathbb{R}$ and $f:\mathbb{R}^N \times \mathbb{R} \to \mathbb{R}$ are continuous functions verifying some technical conditions and $f$ possesses  a critical growth. Here, we will consider the case where the problem is asymptotically periodic, that is, $V$ is $\mathbb{Z}^N$-periodic,  $W$ goes to 0 at infinity and $f$ is asymptotically periodic.

	\vspace{0.3cm}
	
	\noindent{\bf Mathematics Subject Classifications (2010):} 35B33, 35A15, 35J15 .
	
	\vspace{0.3cm}
	
	\noindent {\bf Keywords:}  critical growth, variational methods, elliptic equations, indefinite strongly functional.
\end{abstract}

\section{Introduction}

In this paper we study the existence of ground state solution for an indefinite variational problem  of the type
$$
\left\{\begin{array}{l}
-\Delta u+(V(x)-W(x))u=f(x,u),\;\; \mbox{in} \;\; \R^{N}, \\
u\in H^{1}(\R^{N}),
\end{array}\right. \eqno{(P)}
$$
where $N \geq 2$, $V,W:\mathbb{R}^N \to \mathbb{R}$ are continuous functions verifying some technical conditions and $f$ has a critical growth. Here, we will consider the case where the problem is asymptotically periodic, that is, $V$ is $\mathbb{Z}^N$-periodic, $W$ goes to 0 at infinity and $f$ is asymptotically periodic.

In \cite{Szulkin1}, Kryszewski and Szulkin  have studied the existence of ground state solution for an indefinite variational problem of the type 
$$
\left\{\begin{array}{l}
-\Delta u+V(x)u=f(x,u), \quad \mbox{in} \quad  \R^{N}, \\
u\in H^{1}(\R^{N}),
\end{array}\right. \eqno{(P_1)}
$$
where $V:\mathbb{R}^{N} \to \mathbb{R}$ is a $\mathbb{Z}^N$-periodic continuous function such that 
$$
0 \not\in \sigma(-\Delta + V),\; \mbox{the spectrum of} \; -\Delta + V. \eqno{(V_1)}
$$
Related to the function $f:\mathbb{R}^N \times \mathbb{R} \to \mathbb{R}$, they assumed that $f$ is continuous, $\mathbb{Z}^N$-periodic in $x$ with
$$
|f(x,t)| \leq c(|t|^{q-1}+|t|^{p-1}), \quad \forall t \in \mathbb{R} \quad \mbox{and} \quad x \in \mathbb{R}^N \eqno{(h_1)}
$$ 
and
$$
0<\alpha F(x,t) \leq tf(x,t) \quad \forall t \in \mathbb{R} , \quad F(x,t)=\int_{0}^{t}f(x,s)\,ds \eqno{(h_2)}
$$
for some $c>0$, $\alpha >2$ and $2<q<p<2^{*}$ where $2^{*}=\frac{2N}{N-2}$ if $N \geq 3$ and $2^{*}=+\infty$ if $N=2$. The above hypotheses guarantee that the energy functional associated with $(P_1)$  given by 
$$
J(u)=\frac{1}{2}\int_{\mathbb{R}^N}(|\nabla u|^{2}+V(x)|u|^{2}\,dx)-\int_{\mathbb{R}^N}F(x,u)\,dx, \,\, u \in H^{1}(\R^N),
$$
is well defined and belongs to $C^{1}(H^{1}(\mathbb{R}^N), \mathbb{R})$. By $(V_1)$, there is an equivalent inner product $\langle \;\; , \;\; \rangle $  in $H^{1}(\mathbb{R}^N)$ such that 
$$
J(u)=\frac{1}{2}\|u^+\|^{2}-\frac{1}{2}\|u^-\|^{2} -\int_{\mathbb{R}^N}F(x,u)\,dx,
$$
where $\|u\|=\sqrt{\langle u,u \rangle}$ and  $H^{1}(\mathbb{R}^N) = E^{+} \oplus E^-$ corresponds to the spectral decomposition of $- \Delta + V $ with respect to the
positive and negative part of the spectrum with $u = u^{+}+u^{-}$, where $u^{+} \in E^{+}$ and $u^{-} \in E^{-}$.  In order to show the existence of solution for $(P_1)$, Kryszewski and Szulkin  introduced a new and interesting generalized link theorem. In \cite{LiSzulkin},  Li and Szulkin have improved this generalized link theorem to prove the existence of solution for a class of indefinite problem with $f$ being asymptotically linear at infinity.   

The link theorems above mentioned have been used in a lot of papers, we would like to cite   Chabrowski and Szulkin \cite{CS}, do \'O and Ruf \cite{DORUF},   Furtado and Marchi \cite{furtado}, Tang \cite{tang, tang2} and their references.

Pankov and Pfl\"uger \cite{Pankov-Pfluger} also have considered the existence of solution for problem $(P_1)$ with the same conditions considered in \cite{Szulkin1}, however the approach is based on an approximation technique of periodic function together with the linking theorem due to Rabinowitz \cite{Rabinowitz}. After, Pankov \cite{Pankov} has studied the  existence of solution for problems of the type  
$$
\left\{\begin{array}{l}
-\Delta u+V(x)u=\pm f(x,u), \quad \mbox{in} \quad  \R^{N}, \\
u\in H^{1}(\R^{N}),
\end{array}\right. \eqno{(P_2)}
$$
by supposing $(V_1)$, $(h_1)-(h_2)$ and employing the same approach explored in \cite{Pankov-Pfluger}.  In   \cite{Pankov} and \cite{Pankov-Pfluger}, the existence of ground state solution has been established by supposing that $f$ is $C^{1}$ and there is $\theta \in (0,1)$ such that
$$
0<t^{-1}f(x,t)\leq \theta f'_t(x,t), \quad \forall t \not=0 \quad \mbox{and} \quad x \in \mathbb{R}^N. \eqno({h_3})
$$
However, in  \cite{Pankov}, Pankov has found a ground state solution by minimizing the energy functional  $J$ on the set
$$
\mathcal{O}=\left\{u\in H^{1}(\R^{N})\setminus E^{-}\ ;\ J'(u)u=0\text{ and }J'(u)v=0,\forall\ v\in E^{-}\right\}. 
$$ 
The reader is invited to see that if $J$ is definite strongly, that is, when $E^{-}=\{0\}$,  the set $\mathcal{O}$ is exactly the Nehari manifold associated with $J$. Hereafter, we say that  $u_0 \in H^{1}(\mathbb{R}^{N})$ is called a {\it ground state solution} if 
$$
J'(u_0)=0, \quad u_0 \in \mathcal{O} \quad \mbox{and} \quad J(u_0)=\inf_{w \in \mathcal{O}}J(w).
$$

In \cite{SW}, Szulkin and Weth have established the existence of ground state solution for problem $(P_1)$ by completing the study made in  \cite{Pankov}, in the sense that, they also minimize the energy function 
on  $\mathcal{O}$, however they have used more weaker conditions on $f$, for example $f$ is continuous, $\mathbb{Z}^N$-periodic in $x$ and satisfies 
$$
|f(x,t)|\leq C(1+|t|^{p-1}), \;\; \forall t \in \mathbb{R} \quad \mbox{and} \quad x \in \mathbb{R}^N  \eqno{(f_1)}
$$ 
for some $C>0$ and $p \in (2,2^{*})$.
$$
f(x,t)=o(t) \,\,\, \mbox{uniformly in } \,\, x \,\, \mbox{as} \,\, |t| \to 0 \eqno{(f_2)}
$$
$$
F(x,t)/|t|^{2} \to +\infty \,\,\, \mbox{uniformly in } \,\, x \,\, \mbox{as} \,\, |t| \to +\infty \eqno{(f_3)}
$$
and
$$
t\mapsto f(x,t)/|t| \,\,\, \mbox{is strictly increasing on} \,\,\, \mathbb{R} \setminus \{0\}. \eqno{(f_4)}
$$

The same approach has been used by Zhang, Xu and Zhang \cite{ZXZ, ZXZ2} to study a class of indefinite and asymptotically periodic problem.

After a review bibliography, we have observed that there are few papers involving indefinite problem whose the nonlinearity has a critical growth. For example, the critical case for $N \geq 4$ was considered in \cite{CS}, \cite{Schechter2} and \cite{ZXZ2} when $f$ is given by
$$
f(x,t)=g(x,t)+k(x)|t|^{2^{*}-2}t,
$$
with $g:\mathbb{R}^N \times \mathbb{R} \to \mathbb{R}$ being a function with subcritical growth and $k:\mathbb{R}^{N} \to \mathbb{R}$ be a continuous function satisfying some conditions.  For the case $N=2$, we know only the paper \cite{DORUF} which considered the periodic case with $f$ having an exponential critical growth, namely there is $\alpha_0>0$ such that   
$$
\lim_{|t| \to +\infty}\frac{|f(t)|}{e^{\alpha|t|^{2}}}=0, \;\; \forall \alpha > \alpha_0, 
\lim_{|t| \to +\infty}\frac{|f(t)|}{e^{\alpha|t|^{2}}}=+\infty, \;\; \forall \alpha < \alpha_0.
$$

Motivated by ideas found in Szulkin and Weth \cite{SW, SW2} together with the fact that there are few papers involving critical growth for $N=2$ and $N \geq 3$ and indefinite problem, we intend in the present paper to study the existence of ground state solution for $(P)$ , with the nonlinearity $f$ having critical growth and the problem being asymptotically periodic. Since we will work with the dimensions $N=2$ and $N \geq 3$, we will state our conditions in two blocks, however the conditions on $V$ and $W$ are the same for any these dimensions. \\

\vspace{0.5 cm}

\noindent {\bf The conditions on $V$ and $W$}.

\vspace{0.5 cm}

On the  functions $V$ and $W$, we have assumed the following conditions: 
\begin{itemize}
	
	\item[$(V_1)$] $V:\R^{N}\rightarrow\R$ is continuous and $\Z^{N}$-periodic.
	\item[$(V_2)$] $\underline{\Lambda}:=\sup(\sigma(-\nabla+V)\cap(-\infty,0])<0<\overline{\Lambda}:=\inf(\sigma(-\nabla+V)\cap[0,+\infty))$.
	\item[$(W_1)$] $W:\R^{N}\rightarrow\R$ is continuous and $\displaystyle\lim_{|x|\rightarrow+\infty}W(x)=0$.
	\item[$(W_2)$] $\displaystyle 0<W(x)\leq \Theta=\sup_{x \in \R^{N}}W(x)<{\overline{\Lambda}}, \quad \forall x \in \mathbb{R}^N.$
	
\end{itemize}

With relation to function $f$, we have assumed the following conditions:

\vspace{0.5 cm}

\noindent {\bf The dimension $N \geq 3$:}

\vspace{0.5 cm}

For this case, we suppose that $f$ is the form
$$
f(x,t)=h(x)|t|^{q-1}t+k(x)|t|^{2^{*}-2}t
$$
with $1<q<2^{*}-1$ and 
\begin{itemize}
	\item[$(C_1)$] $h(x)=h_{0}(x)+h_{*}(x)$ and $k(x)=k_{0}(x)+k_{*}(x)$, where $h_{0},h_{*},k_{0},k_{*}:\R^{N}\rightarrow\R$ are continuous function, $h_0, k_0$ are $\Z^{N}$-periodic, $\displaystyle\lim_{|x|\rightarrow+\infty}h_{*}(x)=\displaystyle\lim_{|x|\rightarrow+\infty}k_{*}(x)=0$ and $h_{0}, h_{*},k_{0},k_{*}$ are positive.  
	\item[$(C_2)$] There is $x_0 \in \mathbb{R}^N$ such that  
	$$
k(x_0)=\max_{x \in \mathbb{R}^N}k(x) \quad \mbox{and} \quad k(x)-k(x_0)=o(|x-x_0|^{2}) \quad \mbox{as} \quad x \to x_0.
	$$
	\item[$(C_3)$] If $\inf_{x \in \mathbb{R}^N}h(x)=0$, we assume that $V(x_0)<0$. 
	
\end{itemize}	

\vspace{0.5 cm}

\noindent {\bf The dimension $N=2$:}

\vspace{0.5 cm}

\begin{itemize}
	\item[$(f_1)$] There exist functions $f_{0},f^{*}:\R^{2}\times\R\rightarrow\R$  such that 
	$$
	f(x,t)=f_{0}(x,t)+f^{*}(x,t),
	$$ 
	where $f_0$ and $f^*$ are continuous functions, $f_{0}$ is $\Z^{2}$-periodic with respect to $x$, $f^*$ is nonnegative and satisfies the following condition: Given $\epsilon>0$ and $\beta>0$, there exists $R>0$ such that 
	$$
	|f^{*}(x,t)|<\epsilon (e^{\beta s^{2}}-1) \quad \forall t \in \R \quad \mbox{and} \quad \forall  x\in\R^{2}\setminus B_{R}(0).
	$$ 
	\item[$(f_2)$] $\frac{f(x,t)}{t}, \frac{f_0(x,t)}{t}\rightarrow0 $ as $t\rightarrow0$ uniformly with respect to $x\in\R^{2}$.
\item[$(f_3)$] For each fixed $x\in\R^{2}$, the functions $t\mapsto\frac{f(x,t)}{t}$ and $t\mapsto\frac{f_{0}(x,t)}{t}$ are increasing on $(0,+\infty)$ and decreasing on $(-\infty,0)$. 
\item[$(f_4)$] There exist $\theta,\mu>2$ such that
$$0<\theta F_{0}(x,t)\leq tf_{0}(x,t)\hspace{1cm}\text{and}\hspace{1cm}0<\mu F(x,t)\leq tf(x,t)$$
for all $(x,t)\in\R^{2}\times\R$, where 
$$
F_{0}(x,t):=\int_{0}^{t}f_{0}(x,s)ds\ \ \ \text{ and }\ \ \ F(x,t):=\int_{0}^{t}f(x,s)ds.
$$
\item[$(f_5)$]\label{condicaocritica} There exists $\Gamma >0$ such that $|f_0(x,t)|,|f_*(x,t)|\leq \Gamma e^{4\pi t^2}$ for all $(x,t)\in\R^{2}\times\R$. 
\end{itemize}

\begin{itemize}
\item[$(f_6)$]$F_{0}(x,t)\geq D(x)|t|^{q},\ \forall\ (x,t)\in\R^{2}\times\R,$ \quad for some positive continuous function $D$ with $\inf_{x \in \mathbb{R}^2}D(x)>0$ and $q >2$. 
\end{itemize}

An example of a function $f$ verifying $(f_1)-(f_6)$ is 
$$
f(x,t)=\lambda(3-sen(x_1+x_2))|t|^{q-2}te^{4 \pi t^{2}}+\frac{1}{x_1^{2}+x_2^{2}+1}e^{4 \pi |t|^{\tau-2}t}, \quad \forall t \in \mathbb{R} 
$$
with $x=(x_1,x_2), \lambda >0, q \in (2,+\infty)$ and $\tau \in (1,2)$.

\vspace{0.5 cm}

The above conditions imply that $f$ has a critical growth  if $N=2$ or $N \geq 3$. 

\vspace{0.5 cm}

Our main theorem is the following:

\begin{theorem} \label{T1} Assume that $(V_1)-(V_2)$, $(W_1)-(W_2)$, $(C_1)-(C_3)$ and $(f_1)-(f_6)$ hold. Then, problem $(P)$ has a ground state solution for $N \geq 4$. If $N=2,3$, there is $\lambda^*>0$ such that if 
$ \inf_{x \in \mathbb{R}^2}D(x), 	\inf_{x \in \mathbb{R}^N}h(x) \geq \lambda^*$,  then problem $(P)$ has a ground state solution.

\end{theorem}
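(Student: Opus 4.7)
The plan is to realize the ground state as a minimum of $J$ on the generalized Nehari set $\mathcal{O}$, following the Szulkin--Weth scheme \cite{SW}, and to control the loss of compactness caused both by the translation invariance of the periodic problem (asymptotic periodicity) and by the critical growth. Using $(V_1)$--$(V_2)$ I decompose $H^{1}(\R^{N})=E^{+}\oplus E^{-}$ spectrally, so that
\[
J(u)=\tfrac12\|u^{+}\|^{2}-\tfrac12\|u^{-}\|^{2}-\tfrac12\int_{\R^{N}}W u^{2}-\int_{\R^{N}}F(x,u),
\]
and I introduce the periodic companion $J_{0}$ obtained by replacing $W$, $h_{*}$, $k_{*}$, $f^{*}$ by zero. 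Standard computations using $(f_3)$--$(f_4)$, $(C_1)$ and $(W_2)$ show that for each $u\in E^{+}\setminus\{0\}$ the map $v\mapsto J(v)$ on $\hat E(u):=\R^{+}u\oplus E^{-}$ attains a unique strict maximum at some $\hat m(u)\in\mathcal{O}$, yielding the minimax characterization $c:=\inf_{\mathcal{O}}J=\inf_{u\in E^{+}\setminus\{0\}}\max_{\hat E(u)}J>0$, and analogously $c_{0}:=\inf_{\mathcal{O}_{0}}J_{0}>0$ for the periodic functional.

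The core of the proof is a pair of threshold estimates. First, the strict inequality $c<c_{0}$: since $W>0$ by $(W_2)$, $h_{*},k_{*}\geq 0$ by $(C_1)$ and $f^{*}\geq 0$ by $(f_1)$, one has $J(v)<J_{0}(v)$ whenever $v\not\equiv 0$, and evaluating this at a minimizer (or sufficiently good near-minimizer) of $J_{0}$ on $\mathcal{O}_{0}$ forces $c<c_{0}$. Second, for $N\geq 3$ I need the compactness threshold
\[
c<\frac{1}{N}\left(\frac{S}{k(x_{0})^{(N-2)/N}}\right)^{\!N/2},
\]
with $S$ the best Sobolev constant; this I obtain by testing $\hat m$ against a truncated Aubin--Talenti bubble $U_{\varepsilon}$ concentrated at the point $x_{0}$ supplied by $(C_2)$, in the spirit of Brezis--Nirenberg. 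The flatness condition $(C_2)$ absorbs the error produced by the variable coefficient $k$; the subcritical term $h|t|^{q-1}t$ produces a gain that by itself overpowers the concentration defect when $N\geq 4$, while for $N=3$ (and, in the Moser--Trudinger analogue, for $N=2$) the gain is only strong enough once the subcritical amplitude is large, which is precisely what $\inf h,\inf D\geq\lambda^{*}$ delivers. When $\inf h=0$, the negativity of $V(x_{0})$ imposed by $(C_3)$ supplies the missing slack through the Brezis--Nirenberg term $\int V(x_{0})U_{\varepsilon}^{2}$. The two-dimensional case uses the Moser sequence of cones together with $(f_5)$--$(f_6)$ to drive the level under the analogous Moser--Trudinger threshold tied to $\alpha_{0}=4\pi$.

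With the thresholds in hand I produce a $(PS)_{c}$ sequence $(u_{n})$ for $J$ on $\mathcal{O}$ by Ekeland's principle applied to the $C^{1}$ parametrization $\hat m\colon E^{+}\setminus\{0\}\to\mathcal{O}$; boundedness follows from the Ambrosetti--Rabinowitz-type control built into the assumptions (the power structure in $(C_1)$ for $N\geq 3$, and $(f_4)$ for $N=2$). Extracting a weak limit $u_{n}\rightharpoonup u_{0}$ in $H^{1}(\R^{N})$, the case $u_{0}\neq 0$ is closed by a Brezis--Lieb decomposition for the critical term (resp.\ a localized Moser--Trudinger argument in dimension two), combined with the critical threshold above, so that $u_{0}$ is a nontrivial critical point with $J(u_{0})=c$, i.e.\ a ground state. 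The case $u_{0}=0$ is ruled out by asymptotic periodicity: the tails involving $W$, $h_{*}$, $k_{*}$ and $f^{*}$ disappear in the limit, and a Lions concentration argument gives shifts $y_{n}\in\Z^{N}$ with $|y_{n}|\to\infty$ such that $v_{n}:=u_{n}(\cdot+y_{n})\rightharpoonup v_{0}\neq 0$ is a critical point of $J_{0}$ at level $\leq c$, contradicting $c<c_{0}$. The main obstacle is the critical threshold estimate in the low-dimensional cases $N=2,3$, where the concentration energy of the extremizer is only barely beaten; this is exactly the reason a quantitative lower bound $\lambda^{*}$ on $\inf h$ and $\inf D$ must be imposed there, while for $N\geq 4$ the subcritical correction alone is already sufficient.
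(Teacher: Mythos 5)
Your proposal follows essentially the same Szulkin--Weth/generalized Nehari manifold route as the paper: spectral decomposition $H^{1}(\R^{N})=E^{+}\oplus E^{-}$, the unique maximizer map $\hat m$ onto $\mathcal M$, a level estimate below the relevant compactness threshold, a $(PS)_{c}$ sequence from Ekeland applied to $\hat\Psi(u)=\Phi_{W}(m(u))$, boundedness, and a Lions translation argument to rule out vanishing using asymptotic decay of $W,h_{*},k_{*},f^{*}$. Two remarks about where you deviate.

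First, you assert the \emph{strict} gap $c<c_{0}$ up front by projecting a periodic minimizer. This is correct (and the periodic ground state must be produced first for the argument to close; a ``sufficiently good near-minimizer'' would only give $c\leq c_{0}$), but it is a slightly different organization from the paper, which only records $\gamma_{W}\leq\gamma$ and then handles the two cases $\gamma_{W}=\gamma$ and $\gamma_{W}<\gamma$ separately. In fact, once a periodic ground state exists, the case $\gamma_{W}=\gamma$ is vacuous because $W>0$ forces a strict drop, so your version is marginally cleaner; the paper's is more agnostic.

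Second, and more substantively, your treatment of $N=2$ does not match the paper. You propose constructing Moser sequences (logarithmic cones) and pushing the level below a Moser--Trudinger threshold ``tied to $\alpha_{0}=4\pi$''. The paper does no such thing: Proposition~\ref{controlec} instead fixes $\widetilde A\in(0,1/a)$ (with $a$ the norm-equivalence constant), uses $(f_{6})$ together with the projection estimate of Lemma~\ref{projecaolp} to bound $\gamma_{W}$ by $\max_{s\geq0}(As^{2}-\lambda Bs^{q})$, and observes that this maximum tends to $0$ as $\lambda=\inf D\to\infty$, so $\gamma_{W}<\widetilde A^{2}/2$ once $\inf D\geq\lambda^{*}$. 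The threshold $\widetilde A^{2}/2$ plays the role you attribute to a Moser--Trudinger level, but the estimate is a purely scalar calculation that never constructs concentrated test functions and is technically much lighter than the Moser-cone route in the indefinite setting. (The paper uses essentially the same scalar device for $N=3$ with $\lambda=\inf h$, which your proposal does capture correctly.) You should also note that the threshold you write for $N\geq3$ uses $k(x_{0})$, whereas the paper's Lemma~\ref{finalperiodico} concentrates with the \emph{periodic} coefficient and so requires $\gamma_{W}<S^{N/2}/\bigl(N|k_{0}|_{\infty}^{(N-2)/2}\bigr)$; the two are not identical in general.
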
 

\vspace{0.5 cm}

The Theorem \ref{T1} completes  the study made in some of the papers above mentioned, in the sense that we are considering others conditions on $V$ and $f$. For example, for the case $N \geq 3$, it completes the study made in \cite{SW}, because the critical case was not considered for $N \geq 3$ or $N=2$, and the case asymptotically periodic was not also analyzed. The Theorem \ref{T1} also  completes  \cite{DORUF}, because in that paper was proved the existence of a solution only for periodic case, while that we are finding ground state solution for the periodic and asymptotically periodic  case by  using a different method. Finally, the above theorem completes the main result of \cite{Schechter2} and \cite{ZXZ}, because the authors considered only the case $W=0$, and also the paper \cite{CS}, because the dimension $N=3$ was not considered as well as the asymptotically periodic  case. Moreover, in \cite{CS} and \cite{Schechter2} the authors considered only the
$$
V(x_0)<0 \quad \mbox{and} \quad k(x)-k(x_0)=o(|x-x_0|^{2}) \quad \mbox{as} \quad  x \to x_0.
$$      
In Theorem \ref{T1} this condition was not assumed if $\inf_{x \in \mathbb{R}^N}h(x)>0$. 

\vspace{0.5 cm}

Before concluding this introduction, we would like point out that the reader can find others interesting results involving  indefinite variational problem in Jeanjean \cite{Jeanjean},  Schechter \cite{Schechter,Schechter3}, Lin and Tang \cite{lin},  Willem and Zou \cite{WillemZou},  Yang \cite{Yang} and their references.  

\vspace{0.5 cm}

\noindent \textbf{Notation:} In this paper, we use the following
notations:
\begin{itemize}
	\item  The usual norms in $H^{1}(\R^N)$ and $L^{p}(\R^N)$ will be denoted by
	$\|\;\;\;\|_{H^{1}(\R^N)}$ and $|\;\;\;|_{p}$ respectively. 	

	\item   $C$ denotes (possible different) any positive constant.
	
	\item   $B_{R}(z)$ denotes the open ball with center  $z$ and
	radius $R$ in $\mathbb{R}^N$.
	
	\item We say that $u_n \to u$ in $L_{loc}^{p}(\mathbb{R}^N)$  when
	$$
	u_n \to u \quad \mbox{in} \quad L^{p}(B_R(0)), \quad \forall R>0.
	$$
 	\item  If $g$ is a mensurable function, the integral $\int_{\mathbb{R}^N}g(x)\,dx$ will be denoted by $\int g(x)\,dx$.	
	
\end{itemize}

The plan of the paper is as follows: In Section 2 we will show some technical lemmas and prove the Theorem \ref{T1} for $N \geq 3 $, while in Section 3 we will focus our attention to the dimension  $N=2$.

\section{The case $N \geq 3$}

In this section, our intention is to prove the Theorem \ref{T1} for the case $N \geq 3$. Some technical lemmas this section also are true for dimension $N=2$ and they will be used in Section 3.

In this section, our focus is the indefinite problem 
\begin{equation}\label{problema}\left\{\begin{array}{c}
-\Delta u+(V(x)-W(x))u=h(x) |u|^{q-1}u+k(x)|u|^{2^{*}-2}u,\;\; \mbox{in} \;\; \R^{N} \\
u\in H^{1}(\R^{N}), \\
\end{array}\right.
\end{equation}
whose the energy functional $\Phi_{W}:H^{1}(\R^{N})\rightarrow\R$  given by 
\begin{equation}
\Phi_{W}(u)=\frac{1}{2}B(u,u)-\frac{1}{2}\int W(x)|u|^{2}dx-\frac{1}{q+1}\int h(x){|u|^{q+1}}dx-\frac{1}{2^{*}}\int {k(x)|u|^{2^{*}}}dx
\end{equation}
is well defined, $\Phi_W \in C^{1}(H^{1}(\mathbb{R}^N), \mathbb{R})$ and  its critical points are precisely weak solutions of (\ref{problema}). Here, $B$ is the bilinear form
\begin{equation}\label{bilinear form}
B(u,v)=\frac{1}{2}\int (\nabla u\nabla v+V(x)uv)\, dx.
\end{equation}
  Note that the bilinear form $B$ is not positive definite, therefore it does not  induce a norm. As in \cite{SW}, there is an inner product $\langle \;\; , \;\; \rangle $  in $H^{1}(\mathbb{R}^N)$ such that 
\begin{equation} \label{PHIW}
  \Phi_W(u)=\frac{1}{2}\|u^+\|^{2}-\frac{1}{2}\|u^-\|^{2}-\frac{1}{2}\int W(x)|u|^{2}\,dx -\int F(x,u)\,dx,
 \end{equation}
  where $\|u\|=\sqrt{\langle u, u \rangle}$ and $H^{1}(\mathbb{R}^N) = E^{+} \oplus E^-$ corresponds to the spectral decomposition of $- \Delta + V $ with respect to the
  positive and negative part of the spectrum with $u = u^{+}+u^{-}$, where $u^{+} \in E^{+}$ and $u^{-} \in E^{-}$. It is well known that $B$ is positive definite on $E^{+}$, $B$ is negative definite on $E^{-}$ and the norm $\|\,\,\,\|$ is an equivalent norm to the usual norm in $H^{1}(\R^N)$, that is, there are $a,b >0 $ such that
  \begin{equation} \label{equivalente}
  b||u||^{2} \leq ||u||_{H^{1}(\R^{N})}^{2}\leq a||u||^{2},\ \ \forall\ u\in H^{1}(\R^{N}).
  \end{equation}

Hereafter, we denote by $\Phi:H^{1}(\R^{N})\rightarrow\R$ the functional defined by
$$
\Phi(u)=\frac{1}{2}B(u,u)-\frac{1}{q+1}\int h_{0}(x){|u|^{q+1}}dx-\frac{1}{2^{*}}\int k_{0}(x)|u|^{2^{*}}dx,
$$
or equivalently,
\begin{equation} \label{PHI}
\Phi(u)=\frac{1}{2}\|u^+\|^{2}-\frac{1}{2}\|u^-\|^{2}-\frac{1}{q+1}\int h_{0}(x){|u|^{q+1}}dx-\frac{1}{2^{*}}\int k_{0}(x)|u|^{2^{*}}dx.
\end{equation}

Note that the critical points of $\Phi$ are weak solutions of the periodic problem
\begin{equation}\label{problemacasoperiodico}
\left\{\begin{array}{c}
-\Delta u+V(x)u=h_{0}(x) |u|^{q-1}u+k_{0}(x)|u|^{2^{*}-2}u,\;\; \mbox{in} \;\; \R^{N}, \\
u\in H^{1}(\R^{N}). \\
\end{array}\right.
\end{equation}

In the sequel, $\M, E(u)$ and $\hat{E}(u)$ denote the following sets 
$$
\M:=\{u\in H^{1}(\R^{N})\setminus E^{-}\ ;\ \Phi_{W}'(u)u=0\text{ and }\Phi_{W}'(u)v=0,\forall\ v\in E^{-}\}
$$
and
$$
E(u):=E^{-}\oplus \R u\ \text{ and }\ \hat{E}(u):=E^{-}\oplus [0,+\infty)u.
$$ 
Therefore
$$
E(u)=E^{-}\oplus \R u^{+}\ \text{ and }\ \hat{E}(u)=E^{-}\oplus [0,+\infty)u^{+}.
$$ 
Moreover, we denote by $\gamma_W$ and $\gamma $ the real numbers
\begin{equation} \label{gamma}
\gamma_W:=\inf_{\M}\Phi_{W} \quad \mbox{and} \quad \gamma:=\inf_{\M}\Phi.
\end{equation}

\subsection{Technical lemmas}
In this section we are going to show some lemmas which will be used in the proof of main Theorem \ref{T1}. 

\begin{lemma}\label{contadomaximo}
	If $u\in\M$ and $w=su+v$ where $s\geq1$, $v\in E^{-}$  and $w\neq0$, then
	$$
	\Phi_{W}(u+w)<\Phi_{W}(u).
	$$
\end{lemma}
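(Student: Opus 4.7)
My plan is to compute $\Phi_W(u+w)-\Phi_W(u)$ directly, use the two Nehari-type identities defining $\M$ to eliminate the quadratic and $W$-bilinear cross terms, and then reduce to a pointwise polynomial inequality on the integrand.

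First I would write $u+w = (1+s)u+v$ so that $(u+w)^+ = (1+s)u^+$ and $(u+w)^- = (1+s)u^-+v$ in the splitting $H^{1}(\R^N) = E^+\oplus E^-$. Expanding the norms and plugging into (\ref{PHIW}), I would then invoke the identities $\Phi_W'(u)[u] = 0$ and $\Phi_W'(u)[v] = 0$ coming from $u\in\M$ to substitute $\|u^+\|^2 - \|u^-\|^2 = \int Wu^2\,dx + \int f(x,u)u\,dx$ and $\langle u^-,v\rangle = -\int Wuv\,dx - \int f(x,u)v\,dx$. After direct algebra, all $W$-integrals except $-\tfrac{1}{2}\int Wv^2\,dx$ cancel and the identity collapses to
\begin{equation*}
\Phi_W(u+w) - \Phi_W(u) = -\tfrac{1}{2}\|v\|^2 - \tfrac{1}{2}\int W(x)v^2\,dx - \int I(x)\,dx,
\end{equation*}
where $I(x) := F(x,(1+s)u+v) - F(x,u) - \tfrac{(1+s)^2-1}{2}f(x,u)u - (1+s)f(x,u)v$.

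The decisive step is to establish $I(x)\geq 0$, with strict inequality wherever $u(x)\neq 0$. Since $F(x,t) = \tfrac{h(x)}{q+1}|t|^{q+1} + \tfrac{k(x)}{2^*}|t|^{2^*}$ with $h,k>0$ from $(C_1)$ and $I$ is additive in these two contributions, it suffices to verify, for each $p\in\{q,2^*-1\}$ and $r:=1+s\geq 2$, the scalar inequality
\begin{equation*}
\tfrac{|rt+\tau|^{p+1}}{p+1} - \tfrac{|t|^{p+1}}{p+1} - \tfrac{r^2-1}{2}|t|^{p+1} - r|t|^{p-1}t\,\tau \geq 0.
\end{equation*}
The case $t=0$ is trivial, and for $t\neq 0$ the evenness of $F$ reduces to $t>0$. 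Dividing by $t^{p+1}$ and setting $\sigma=\tau/t$, $y=r+\sigma$, this rearranges to $\tfrac{(r-y)^2}{2} + H(y)\geq 0$, where $H(y):=\tfrac{|y|^{p+1}-1}{p+1} - \tfrac{y^2-1}{2}$ satisfies $H'(y) = y(|y|^{p-1}-1)$ and so attains its global minimum $0$ exactly at $y=\pm 1$; equality in the full inequality would force $r=y=\pm 1$, impossible since $r\geq 2$. Hence $I(x) > 0$ on $\{u\neq 0\}$.

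Finally, $u\in\M$ implies $u\not\equiv 0$, so $\int I\,dx > 0$; together with $-\tfrac{1}{2}\|v\|^2 \leq 0$ and $-\tfrac{1}{2}\int Wv^2\,dx \leq 0$ (the latter by $(W_2)$), this yields the strict inequality $\Phi_W(u+w) < \Phi_W(u)$. The main obstacle is the pointwise polynomial inequality: one must reorganize the algebra so that a perfect square $\tfrac{(r-y)^2}{2}$ absorbs the linear cross terms in $\tau$, after which the superquadratic condition $p+1>2$ (guaranteed by $q>1$ and $2^*>2$) makes the residual one-variable check elementary.
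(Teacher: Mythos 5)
Your proposal is correct and follows the same structural route as the paper: expand $\Phi_W(u+w)-\Phi_W(u)$ along the splitting $(u+w)^+=(1+s)u^+$, $(u+w)^-=(1+s)u^-+v$, use the two Nehari identities $\Phi_W'(u)u=0$ and $\Phi_W'(u)v=0$ to cancel the quadratic and $W$-cross terms down to $-\tfrac12\|v\|^2-\tfrac12\int Wv^2$, and reduce the remainder to a pointwise inequality for the integrand. Your identity is exactly the one displayed in the paper's proof, with the observation that the $W$-terms inside $G$ collapse to $-\tfrac12\int Wv^2$. Where the paper then simply writes ``the proof follows by adapting the ideas explored in [SW, Proposition 2.3]'' --- an argument that relies on the abstract monotonicity hypothesis $t\mapsto f(x,t)/|t|$ strictly increasing --- you instead verify the pointwise inequality by a direct computation tailored to $F(x,t)=\tfrac{h}{q+1}|t|^{q+1}+\tfrac{k}{2^*}|t|^{2^*}$, absorbing the cross term into the perfect square $\tfrac{(r-y)^2}{2}$ and checking $H(y)=\tfrac{|y|^{p+1}-1}{p+1}-\tfrac{y^2-1}{2}\geq 0$. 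This is a harmless specialization: it is more elementary and self-contained, at the cost of not generalizing beyond pure power nonlinearities the way the Szulkin--Weth monotonicity argument does. (One minor remark: with $s\geq 1$ and $u\notin E^-$ the hypothesis $w\neq 0$ is automatic, and your strictness comes entirely from $\int I\,dx>0$; in [SW] the hypothesis is $s\geq -1$, where $w\neq 0$ is genuinely needed, so your restriction $r\geq 2$ is what makes the pointwise strictness unconditional.)
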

\begin{proof} In the sequel,  we fix
	$$
	G(x,t):=\frac{1}{2}W(x)t^{2}+\frac{1}{q+1}h(x)|t|^{q+1}+\frac{1}{2^{*}}k(x)|t|^{2^{*}}
	$$ 
	and 
	$$
	g(x,t):=W(x)t+h(x)|t|^{q-1}t+k(x)|t|^{2^{*}-2}t.
	$$	
Then by a simple computation,  
$$
\begin{array}{l}
\Phi_{W}(u+w)-\Phi_{W}(u)= \\
-\frac{1}{2}||v||^{2}+\int \left(g(x,u)\left[\left(\frac{s^{2}}{2}+s\right)u+(s+1)v\right] G(x,u)-G(x,u+w) \right) dx.
\end{array}
$$
Now, the proof follows by adapting  the ideas explored in \cite[Proposition 2.3]{SW}.
\end{proof}

\begin{lemma}\label{lemacompacto}
    Let $\K\subset E^{+}\setminus\{0\}$ be a compact subset, then there exists $R>0$ such that $\Phi_{W}(w)\leq0,\ \forall\ w\in E(u)\setminus B_{R}(0)$ and  $u\in\K$.
\end{lemma}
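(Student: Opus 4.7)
The plan is to argue by contradiction. Suppose no such $R$ exists; then there are sequences $u_n\in\K$ and $w_n\in E(u_n)=E^-\oplus\R u_n$ with $\|w_n\|\to\infty$ and $\Phi_W(w_n)>0$. Write $w_n=t_n u_n+v_n$ with $v_n\in E^-$. By compactness of $\K\subset E^+\setminus\{0\}$, pass to a subsequence so that $u_n\to u\in\K$, with $\|u_n\|$ bounded and bounded away from zero. Using $W>0$ from $(W_2)$ and $F\geq 0$ from $(C_1)$, combining $\Phi_W(w_n)>0$ with (\ref{PHIW}) and the orthogonality $E^+\perp E^-$ yields
$$
\|v_n\|^2<t_n^2\|u_n\|^2\quad\mbox{and}\quad\int F(x,w_n)\,dx<\frac{t_n^2}{2}\|u_n\|^2.
$$
From $\|w_n\|^2=t_n^2\|u_n\|^2+\|v_n\|^2<2t_n^2\|u_n\|^2$ and $\|w_n\|\to\infty$, I deduce $|t_n|\to\infty$; after possibly replacing $u$ by $-u$ I may assume $t_n\to+\infty$.

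The crucial step is to normalize by $t_n$ (rather than by $\|w_n\|$) so as to preserve the coefficient of $u_n$. Setting $\bar v_n:=v_n/t_n\in E^-$, the first estimate shows that $\{\bar v_n\}$ is bounded in $H^{1}(\R^N)$, so along a subsequence $\bar v_n\rightharpoonup\bar v$ weakly in $H^1$ and a.e., with $\bar v\in E^-$ since $E^-$ is closed. Therefore $w_n/t_n=u_n+\bar v_n\to u+\bar v$ a.e.\ in $\R^N$.

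Define $A:=\{x\in\R^N:u(x)+\bar v(x)\neq 0\}$. If $|A|>0$, then $|w_n(x)|=t_n|w_n(x)/t_n|\to\infty$ pointwise on $A$. Since $h_0$ and $k_0$ are continuous, $\Z^N$-periodic and strictly positive, they are bounded below by a positive constant, so $F(x,t)\geq c(|t|^{q+1}+|t|^{2^*})$ uniformly in $x$, which forces $F(x,t)/t^2\to+\infty$ uniformly in $x$ as $|t|\to\infty$. Fatou's lemma then gives
$$
\liminf_n\frac{1}{t_n^2}\int F(x,w_n)\,dx\;\geq\;\int_A\liminf_n\frac{F(x,w_n)}{|w_n|^2}\left|\frac{w_n}{t_n}\right|^2 dx\;=\;+\infty,
$$
contradicting the second a priori bound. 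Hence $|A|=0$, so $u=-\bar v$ a.e., which forces $u\in E^+\cap E^-=\{0\}$, contradicting $u\neq 0$.

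The main obstacle I foresee is exactly this choice of scaling. Dividing by $\|w_n\|$ -- the most natural normalization -- could send the $E^+$-component $t_n\|u_n\|/\|w_n\|$ to zero and eliminate the superquadratic gain from $F$; dividing instead by $t_n$ keeps the coefficient of $u_n$ pinned at $1$, so the limit $u+\bar v$ either is nontrivial (giving blowup of $F/t^2$ on a set of positive measure) or forces an impossible identification of an element of $E^+\setminus\{0\}$ with an element of $E^-$.
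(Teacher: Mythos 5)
Your proof is correct, but the route is different from the paper's. The paper's proof is a two-line reduction: since $W>0$, $k>0$, and $\lambda:=\inf_{\R^N}h>0$ (which follows from $(C_1)$, as $h_0$ is continuous, $\Z^N$-periodic and positive), one has $\Phi_W(u)\leq\Psi_*(u):=\frac{1}{2}B(u,u)-\frac{\lambda}{q+1}\int|u|^{q+1}\,dx$ for all $u$, and then one applies \cite[Lemma 2.2]{SW} to $\Psi_*$ and is done. You instead inline the contradiction argument that underlies that cited lemma, running it directly on $\Phi_W$: extract the estimates $\|v_n\|^2<t_n^2\|u_n\|^2$ and $\int F(x,w_n)\,dx<\frac{1}{2}t_n^2\|u_n\|^2$ from $\Phi_W(w_n)>0$, normalize by $t_n$, pass to a weak/a.e.\ limit $u+\bar v$, use Fatou with the superquadratic growth $F(x,t)/t^2\to+\infty$ to force $u+\bar v=0$ a.e., and conclude via $E^+\cap E^-=\{0\}$. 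Both approaches hinge on the same superquadratic structure; the paper's is terser and delegates, yours is self-contained and exposes the mechanism. One small correction to your closing remark: the worry about normalizing by $\|w_n\|$ is actually unfounded here, because your first a priori estimate already gives $\frac{1}{\sqrt{2}\sup_n\|u_n\|}\leq\frac{|t_n|}{\|w_n\|}\leq\frac{1}{\inf_n\|u_n\|}$, so the $E^+$-component of the $\|w_n\|$-normalized sequence stays bounded away from zero and either normalization carries through; dividing by $t_n$ is merely more convenient, not essential.
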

\begin{proof} Setting the functional 
$$
\Psi_*(u)=\frac{1}{2}B(u,u)-\frac{\lambda}{q+1}\int|u|^{q+1}\, dx 
$$
with $\lambda=\inf_{\R^{N}}h>0$, we have 
$$
\Phi_{W}(u) \leq \Psi_*(u), \quad \forall u \in H^{1}(\R^N). 
$$	
Now, we apply \cite[Lemma 2.2]{SW} to the functional $\Psi_*$ to get the desired result. 
\end{proof}
\begin{lemma}\label{supercontinuidade}
	For all $u\in H^{1}(\R^{N})$, the functional $\Phi_W|_{E(u)}$ is weakly upper semicontinuous.
\end{lemma}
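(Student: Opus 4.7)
The plan is to take an arbitrary sequence $(w_n) \subset E(u)$ with $w_n \rightharpoonup w$ weakly in $H^{1}(\R^{N})$ (necessarily $w \in E(u)$ since $E(u)$ is a closed subspace) and to verify that $\limsup_{n\to\infty} \Phi_W(w_n) \leq \Phi_W(w)$. Since $E(u) = E^{-} \oplus \R u^{+}$, I would decompose $w_n = t_n u^{+} + v_n^{-}$ with $t_n \in \R$ and $v_n^{-} \in E^{-}$. The subspace $\R u^{+}$ is one-dimensional, so weak convergence forces $t_n \to t$ and hence $w_n^{+} = t_n u^{+} \to t u^{+} = w^{+}$ strongly, while $w_n^{-} = v_n^{-} \rightharpoonup v^{-} = w^{-}$ weakly in $E^{-}$. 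Passing to a subsequence, the compact embedding $H^{1} \hookrightarrow L^{p}_{\text{loc}}(\R^{N})$ for $p \in [1,2^{*})$ yields $w_n(x) \to w(x)$ almost everywhere in $\R^{N}$.

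Using the expression (\ref{PHIW}), I would estimate each of the four pieces of $\Phi_W(w_n)$. The term $\tfrac{1}{2}\|w_n^{+}\|^{2}$ converges to $\tfrac{1}{2}\|w^{+}\|^{2}$ by the strong convergence of $w_n^+$, while the weak lower semicontinuity of the norm on $E^-$ gives
$\limsup_n \bigl(-\tfrac{1}{2}\|w_n^{-}\|^{2}\bigr) \leq -\tfrac{1}{2}\|w^{-}\|^{2}$.
The remaining two terms are nonpositive in the integrand: $W > 0$ by $(W_2)$, and $(C_1)$ yields $h,k > 0$, so the density $F(x,t) = \tfrac{h(x)}{q+1}|t|^{q+1} + \tfrac{k(x)}{2^{*}}|t|^{2^{*}}$ is pointwise nonnegative. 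Fatou's lemma applied to the a.e.\ pointwise limit then gives
\begin{equation*}
\int W(x)|w|^{2}\,dx \leq \liminf_n \int W(x)|w_n|^{2}\,dx, \qquad \int F(x,w)\,dx \leq \liminf_n \int F(x,w_n)\,dx,
\end{equation*}
which is exactly the one-sided bound needed for the negative contributions $-\tfrac{1}{2}\int W|w_n|^{2}\,dx$ and $-\int F(x,w_n)\,dx$. Summing the four estimates and using the elementary inequality $\limsup(a_n+b_n) \leq \limsup a_n + \limsup b_n$ (with equality in the presence of a convergent summand) produces $\limsup_n \Phi_W(w_n) \leq \Phi_W(w)$ along the subsequence; the standard subsequence-of-subsequence trick then upgrades this to the full sequence.

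The only potentially delicate point is the critical term $\int k(x)|w_n|^{2^{*}}\,dx$, for which weak convergence alone does not give a strong limit and concentration may occur. However, for upper semicontinuity only the one-sided inequality is needed, and the positivity of $k$ together with Fatou's lemma avoids any appeal to concentration-compactness at this stage. The hypothesis $W(x) \to 0$ at infinity is not required in this lemma; it will instead be needed later to recover compactness of Palais--Smale sequences.
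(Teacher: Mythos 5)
Your proof is correct and takes essentially the same approach as the paper: the paper splits $\Phi_W|_{E(u)} = \widetilde\Psi - \widetilde\Phi$ with $\widetilde\Psi(w)=\tfrac12 B(w,w)$ shown weakly upper semicontinuous via the finite-dimensionality of $\R u^{+}$ and weak lower semicontinuity of the norm on $E^{-}$, and $\widetilde\Phi$ (the sum of the $W$-, $h$-, and $k$-integrals) shown weakly lower semicontinuous via Fatou's lemma applied to the a.e.\ limit. Your four-term decomposition simply arranges the same ingredients in a slightly more granular way.
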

\begin{proof}
	First of all, note that $E(u)$ is weakly closed, because it is convex strongly closed. Now, we claim that the functional
$$
	\begin{array}{rcccl}
	\widetilde{\Phi} & : & E(u) & \rightarrow & \R \\
	                    &   & w    & \mapsto     & \frac{1}{2}\int W(x)|w|^{2}\,dx+\frac{1}{q+1}\int h(x)|w|^{q+1}\,dx+\frac{1}{2^{*}}\int k(x)|w|^{2^{*}}\,dx
	                    \end{array}
$$
is weakly lower semicontinuous. Indeed, if $w_{n}\rightharpoonup w$ on $E(u)$, then after passing to a subsequence $w_{n}(x)\rightarrow w(x)$ a.e. in $\R^{N}$. Then  by Fatou's Lemma,
$$
\widetilde{\Phi}(w)=\int W(x)w^{2}\,dx+\frac{1}{q+1}\int h(x)|w|^{q+1}\,dx+\frac{1}{2^{*}}\int k(x)|w|^{2^{*}}\,dx\leq
$$
$$
\leq\liminf_{n\to +\infty}\left[\int W(x)w_{n}^{2}\,dx+\frac{1}{q+1}\int h(x)|w_{n}|^{q+1}\,dx+\frac{1}{2^{*}}\int k(x)|w_{n}|^{2^{*}}\,dx\right],
$$
leading to 
$$
\widetilde{\Phi}(w) \leq \liminf_{n\to +\infty}\widetilde{\Phi}(w_{n}).
$$
Furthermore, the functional
$$
\begin{array}{rcccl}
\widetilde{\Psi} & : & E(u) & \rightarrow & \R \\
&   & w    & \mapsto     & \frac{1}{2}B(w,w)
\end{array}
$$
is weakly upper semicontinuous. In fact, since
$$
\widetilde{\Psi}(w)=\frac{1}{2}(||w^{+}||^{2}-||w^{-}||^{2}),
$$
if $w_{n}=s_{n}u^{+}+v_n\rightharpoonup w=su^{+}+v$ with $v_{n},v\in E^{-}$, then $s_{n}\rightarrow s$ in $\R$ and $v_{n}\rightharpoonup v$ in $H^{1}(\R^N)$. Thus, 
$$
\widetilde{\Psi}(w)=\frac{1}{2}(s^{2}||u^{+}||^{2}-||v||^{2})\geq\limsup_{n \to +\infty}\frac{1}{2}(s_{n}^{2}||u^{+}||^{2}-||v_{n}||^{2})=\limsup_{n \to +\infty}\widetilde{\Psi}(w_{n}).
$$
As $\Phi_{W}|_{E(u)}=\widetilde{\Psi}-\widetilde{\Phi}$, the result is proved.              
\end{proof}

\begin{lemma}\label{funcaom}
	For each $u\in H^{1}(\R^{N})\setminus E^{-}$, $\M\cap\hat{E}(u)$ is a singleton set and the element of this set is the unique global maximum of $\Phi_{W}|_{\hat{E}(u)}$
\end{lemma}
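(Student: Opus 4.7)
The plan is to produce the singleton element of $\M\cap\hat{E}(u)$ as the global maximizer of $\Phi_W|_{\hat{E}(u)}$, using Lemma \ref{lemacompacto} and Lemma \ref{supercontinuidade} to obtain existence, and Lemma \ref{contadomaximo} to obtain both uniqueness and the fact that the element is the unique global maximum. Fix $u\in H^{1}(\R^N)\setminus E^{-}$, so that $u^{+}\neq 0$, and note that $\hat{E}(u)=E^{-}\oplus[0,+\infty)u^{+}$ is convex and strongly closed, hence weakly closed.

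First I would show that $\Phi_{W}|_{\hat{E}(u)}$ attains its supremum. Applying Lemma \ref{lemacompacto} to the compact set $\K=\{u^{+}\}\subset E^{+}\setminus\{0\}$, there is $R>0$ with $\Phi_{W}(w)\leq 0$ for every $w\in E(u)\setminus B_{R}(0)$, and in particular for every $w\in\hat{E}(u)\setminus B_{R}(0)$. On the other hand, for small $s>0$,
$$
\Phi_{W}(su^{+})=\tfrac{s^{2}}{2}\bigl(\|u^{+}\|^{2}-\textstyle\int W(x)|u^{+}|^{2}dx\bigr)-\tfrac{s^{q+1}}{q+1}\textstyle\int h|u^{+}|^{q+1}dx-\tfrac{s^{2^{*}}}{2^{*}}\textstyle\int k|u^{+}|^{2^{*}}dx,
$$
and conditions $(V_{2})$ and $(W_{2})$ give $\|u^{+}\|^{2}=\int(|\nabla u^{+}|^{2}+V|u^{+}|^{2})dx\geq\overline{\Lambda}|u^{+}|_{2}^{2}>\Theta|u^{+}|_{2}^{2}\geq\int W|u^{+}|^{2}dx$, so that the quadratic coefficient is strictly positive and $\Phi_{W}(su^{+})>0$ for $s>0$ small. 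Hence $c:=\sup_{\hat{E}(u)}\Phi_{W}>0$ and any maximizing sequence eventually lies in $\hat{E}(u)\cap\overline{B}_{R}(0)$, which is bounded, convex and weakly closed, hence weakly compact in $H^{1}(\R^{N})$. Lemma \ref{supercontinuidade} says $\Phi_{W}|_{E(u)}$ is weakly upper semicontinuous; therefore $\Phi_{W}$ attains $c$ at some $w_{0}=s_{0}u^{+}+v_{0}$ with $s_{0}\geq 0$ and $v_{0}\in E^{-}$.

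Next I would verify $w_{0}\in\M$. Because $W,h,k\geq 0$, one checks directly that $\Phi_{W}(v)\leq 0$ for all $v\in E^{-}$, so $\Phi_{W}(w_{0})=c>0$ forces $s_{0}>0$, i.e.\ $w_{0}\notin E^{-}$. Since $s_{0}>0$, small perturbations $w_{0}\pm\varepsilon u^{+}$ and $w_{0}+\varepsilon v$ (with $v\in E^{-}$) stay inside $\hat{E}(u)$; the maximality of $w_{0}$ then yields $\Phi'_{W}(w_{0})u^{+}=0$ and $\Phi'_{W}(w_{0})v=0$ for all $v\in E^{-}$, and consequently $\Phi'_{W}(w_{0})w_{0}=s_{0}\Phi'_{W}(w_{0})u^{+}+\Phi'_{W}(w_{0})v_{0}=0$. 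This is exactly the membership $w_{0}\in\M\cap\hat{E}(u)$.

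Finally, uniqueness and the maximization property follow from Lemma \ref{contadomaximo}. For any $w\in\M\cap\hat{E}(u)$, writing $w=su^{+}+v$ with $s>0$ yields $w^{+}=su^{+}$, hence $\hat{E}(w)=\hat{E}(u)$. Lemma \ref{contadomaximo} (reading the parameter as ``$s\geq -1$'') then says $\Phi_{W}(w')<\Phi_{W}(w)$ for every $w'\in\hat{E}(w)\setminus\{w\}=\hat{E}(u)\setminus\{w\}$, so $w$ is the \emph{strict} global maximum of $\Phi_{W}|_{\hat{E}(u)}$. Two such strict maxima cannot coexist, so $\M\cap\hat{E}(u)=\{w_{0}\}$, and this element is the unique global maximum of $\Phi_{W}|_{\hat{E}(u)}$. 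The main obstacle is ruling out the degenerate case $s_{0}=0$ in Step 2, which is precisely what conditions $(V_{2})$ and $(W_{2})$ are designed to handle; everything else is a packaging of the earlier lemmas.
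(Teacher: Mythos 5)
Your proof is correct and follows essentially the same route as the paper, which simply cites \cite[Lemma 2.6]{SW} without details: existence of a maximizer on $\hat{E}(u)$ from Lemmas \ref{lemacompacto} and \ref{supercontinuidade} (after noting $\Phi_W>0$ somewhere on $(0,\infty)u^+$ and $\Phi_W\le 0$ on $E^-$), the first-order conditions at the maximizer giving membership in $\M$, and strict global maximality plus uniqueness from Lemma \ref{contadomaximo}. Your remark that Lemma \ref{contadomaximo} must be read with ``$s\ge -1$'' rather than the stated ``$s\ge 1$'' is apt — with $s\ge 1$ one would only compare $\Phi_W$ on the ray $t\ge 2s_0$ and the uniqueness/maximality step would fail, whereas $s\ge -1$ is exactly the form in Szulkin--Weth's Proposition 2.3 and sweeps out all of $\hat{E}(u)\setminus\{w_0\}$.
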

\begin{proof}
	 The proof follows very closely the proof of \cite[Lemma 2.6]{SW}.  
\end{proof}
\begin{lemma}\label{primeirageometria}
There exists $\rho>0$ such that $\displaystyle\inf_{B_{\rho}(0)\cap E^{+}}\Phi_{W}>0$.
\end{lemma}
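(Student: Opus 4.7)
The plan is to establish the usual mountain pass lower bound on a small sphere of $E^+$: the indefinite quadratic form is positive and strictly dominant on $E^+$ (thanks to the spectral gap $(V_2)$ and the smallness $\Theta<\overline{\Lambda}$ from $(W_2)$), while the nonlinear terms are of higher order thanks to the Sobolev embedding. For $u\in E^+$ one has $u^-=0$, so by \eqref{PHIW}
\begin{equation*}
\Phi_W(u) \;=\; \tfrac{1}{2}\|u\|^{2} \;-\; \tfrac{1}{2}\int W(x)|u|^{2}\,dx \;-\; \tfrac{1}{q+1}\int h(x)|u|^{q+1}\,dx \;-\; \tfrac{1}{2^{*}}\int k(x)|u|^{2^{*}}\,dx.
\end{equation*}

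\textbf{Step 1: coercivity of the quadratic part on $E^{+}$.} Because the norm $\|\cdot\|$ was built from the spectral decomposition of $-\Delta+V$, its restriction to $E^{+}$ is (a multiple of) the quadratic form $u\mapsto \int(|\nabla u|^{2}+V(x)|u|^{2})\,dx$; condition $(V_{2})$ then yields a constant $c_{0}>0$ with
\begin{equation*}
|u|_{2}^{2} \;\leq\; \tfrac{c_{0}}{\overline{\Lambda}}\,\|u\|^{2},\qquad \forall\, u\in E^{+}.
\end{equation*}
Combined with $W(x)\leq \Theta$ from $(W_{2})$ and the strict inequality $\Theta<\overline{\Lambda}$, this gives a $\delta>0$ such that
\begin{equation*}
\tfrac{1}{2}\|u\|^{2}-\tfrac{1}{2}\int W(x)|u|^{2}\,dx \;\geq\; \delta\,\|u\|^{2},\qquad \forall\, u\in E^{+}.
\end{equation*}

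\textbf{Step 2: higher-order control of the nonlinear terms.} By $(C_{1})$ both $h$ and $k$ are bounded on $\mathbb{R}^{N}$ (sum of a periodic function and one vanishing at infinity). Since $q+1\in(2,2^{*})$, the Sobolev embeddings $H^{1}(\mathbb{R}^{N})\hookrightarrow L^{q+1}(\mathbb{R}^{N})$ and $H^{1}(\mathbb{R}^{N})\hookrightarrow L^{2^{*}}(\mathbb{R}^{N})$, together with the norm equivalence \eqref{equivalente}, produce constants $C_{1},C_{2}>0$ with
\begin{equation*}
\int h(x)|u|^{q+1}\,dx \;\leq\; C_{1}\|u\|^{q+1},\qquad \int k(x)|u|^{2^{*}}\,dx \;\leq\; C_{2}\|u\|^{2^{*}},\qquad \forall\, u\in H^{1}(\mathbb{R}^{N}).
\end{equation*}

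\textbf{Step 3: conclusion.} Combining Steps 1 and 2,
\begin{equation*}
\Phi_W(u) \;\geq\; \delta\,\|u\|^{2} \;-\; \tfrac{C_{1}}{q+1}\|u\|^{q+1} \;-\; \tfrac{C_{2}}{2^{*}}\|u\|^{2^{*}},\qquad \forall\, u\in E^{+}.
\end{equation*}
Since $q+1>2$ and $2^{*}>2$, choosing $\rho>0$ small enough so that $\tfrac{C_{1}}{q+1}\rho^{q-1}+\tfrac{C_{2}}{2^{*}}\rho^{2^{*}-2}\leq \delta/2$ forces $\Phi_W(u)\geq (\delta/2)\rho^{2}>0$ uniformly on the sphere $\{u\in E^{+}:\|u\|=\rho\}$, which is the desired conclusion.

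The only delicate point is Step 1: matching the normalisation of the equivalent norm $\|\cdot\|$ with the spectral estimate on $E^{+}$ so that the strict bound $\Theta<\overline{\Lambda}$ from $(W_{2})$ genuinely leaves a fixed positive fraction of $\|u\|^{2}$ after subtracting $\tfrac{1}{2}\int W|u|^{2}\,dx$; once this spectral gap is in hand, the remaining estimates are routine applications of the Sobolev embedding.
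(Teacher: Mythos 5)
Your proof is correct and follows essentially the same route as the paper: bound $\int W|u|^2\,dx$ by $\Theta|u|_2^2$ using $(W_2)$, then use the spectral estimate $|u|_2^2\le\|u\|^2/\overline{\Lambda}$ on $E^+$ so that the quadratic part retains a positive coefficient $\frac{1}{2}\bigl(1-\Theta/\overline{\Lambda}\bigr)$, control the nonlinear terms by Sobolev embeddings and boundedness of $h,k$, and choose $\rho$ small. On your flagged ``delicate point'': the equivalent norm coming from the spectral decomposition is normalised so that $\|u\|^2=\int(|\nabla u|^2+V u^2)\,dx$ for $u\in E^+$, hence your constant $c_0$ equals $1$ and $(W_2)$ alone suffices, exactly as the paper's computation assumes.
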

\begin{proof}
In what follows, let us fix $\overline{h}:=\sup_{x \in \R^{N}}h(x)$ and $\overline{k}:=\sup_{x \in \R^{N}}k(x)$. For $u\in E^{+}$, 
$$
\begin{array}{ll}
\Phi_{W}(u)= & \frac{1}{2}||u||^{2}-\frac{1}{2}\int W(x)|u|^{2}dx-\frac{1}{q+1}\int h(x)|u|^{q+1}dx-\frac{1}{2^{*}}\int k(x)|u|^{2^{*}}dx\\
& \geq\frac{1}{2}||u||^{2}-\frac{\Uptheta}{2}\int |u|^{2}dx-\frac{\overline{h}}{q+1}\int |u|^{q+1}dx-\frac{\overline{k}}{2^{*}}\int |u|^{2^{*}}dx \\
& \geq \frac{1}{2}||u||^{2}-\frac{ \Uptheta}{2\overline{\Lambda}}||u||^{2} 
-\frac{\overline{h}c_{1}}{q+1}||u||^{q+1}-\frac{\overline{k}c_{2}}{2^{*}}||u||^{2^{*}} \\
& =\frac{1}{2}\left(1-\frac{ \Uptheta}{\overline{\Lambda}}\right)||u||^{2}-\frac{\overline{h}c_{1}}{q+1}||u||^{q+1}-\frac{\overline{k}c_{2}}{2^{*}}||u||^{2^{*}}.
\end{array}
$$
Thereby, the lemma follows by taking $\rho>0$ satisfying 
$$
\frac{1}{2}\left(1-\frac{ \Uptheta}{\overline{\Lambda}}\right)\rho^{2}-\frac{\overline{h}c_{1}}{q+1}\rho^{q+1}-\frac{\overline{k}c_{2}}{2^{*}}\rho^{2^{*}}>0.
$$
\end{proof}
\begin{lemma}\label{cpositivo}
	The real number $\gamma_W$ given in (\ref{gamma}) is positive. In addition, if $u\in\M$ then $||u^{+}||\geq\max\{||u^{-}||,\sqrt{2\gamma_W}\}$.
\end{lemma}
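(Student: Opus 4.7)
My plan is to obtain three inequalities in turn—$\|u^+\|\geq\|u^-\|$, $\gamma_W>0$, and $\|u^+\|\geq\sqrt{2\gamma_W}$—each through a short derivation that couples the defining identities of $\M$ with one of the preceding lemmas. The bound $\|u^+\|\geq\|u^-\|$ is immediate from the Nehari-type constraint $\Phi_W'(u)u=0$. Expanding via the orthogonal decomposition $H^{1}(\R^{N})=E^{+}\oplus E^{-}$ (whence the cross term $\langle u^+,u^-\rangle$ vanishes) and rearranging, the identity becomes
\[
\|u^+\|^{2}-\|u^-\|^{2}=\int W(x)|u|^{2}\,dx+\int\!\bigl(h(x)|u|^{q+1}+k(x)|u|^{2^{*}}\bigr)\,dx,
\]
whose right-hand side is nonnegative thanks to $(W_2)$ and $(C_1)$.

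For the positivity of $\gamma_W$ I would combine Lemmas \ref{funcaom} and \ref{primeirageometria}. For any $u\in\M$, Lemma \ref{funcaom} identifies $u$ as the unique global maximum of $\Phi_W$ on $\hat{E}(u)=E^{-}\oplus[0,+\infty)u^{+}$. Since $u\notin E^{-}$ forces $u^{+}\neq 0$, the rescaled element $w:=\rho\, u^{+}/\|u^{+}\|$ sits in $E^{+}\cap\hat{E}(u)$ with $\|w\|=\rho$, where $\rho$ is the radius supplied by Lemma \ref{primeirageometria}. That lemma yields a positive constant $\alpha$ (independent of $u$) such that $\Phi_W(w)\geq\alpha$, and therefore $\Phi_W(u)\geq\Phi_W(w)\geq\alpha$. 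Passing to the infimum over $\M$ gives $\gamma_W\geq\alpha>0$.

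Finally, for $\|u^+\|\geq\sqrt{2\gamma_W}$ I would simply drop the three nonpositive terms in the decomposition (\ref{PHIW}) of $\Phi_W(u)$—namely $-\tfrac{1}{2}\|u^-\|^{2}$, $-\tfrac{1}{2}\int W(x)|u|^{2}\,dx$, and $-\int F(x,u)\,dx$, all nonpositive by $(W_2)$, $(C_1)$ and the resulting nonnegativity of $F$. What remains is $\Phi_W(u)\leq\tfrac{1}{2}\|u^+\|^{2}$, which together with $\Phi_W(u)\geq\gamma_W$ yields the bound. I do not foresee any genuine obstacle: the whole argument is a short synthesis of the two preceding lemmas with the positivity hypotheses on $W$, $h$, and $k$; the only mild care is in reading Lemma \ref{primeirageometria} as providing a positive lower bound on the sphere $\{\|v\|=\rho,\ v\in E^{+}\}$ (as its proof makes clear), since otherwise $\Phi_W(0)=0$ would be in the infimum.
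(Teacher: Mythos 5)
Your proof is correct and follows essentially the same strategy as the paper: project $u \in \M$ onto the sphere of radius $\rho$ in $E^{+}$ along $\hat{E}(u)$ via Lemma \ref{funcaom}, and use the positive lower bound from Lemma \ref{primeirageometria} to conclude $\gamma_W>0$; then control $\|u^+\|$ via the decomposition (\ref{PHIW}). The only variation is in how you obtain $\|u^+\|\geq\|u^-\|$: you expand the Nehari-type identity $\Phi_W'(u)u=0$ and use the sign conditions on $W,h,k$ directly, whereas the paper keeps the term $-\tfrac12\|u^-\|^2$ when dropping the other nonpositive pieces, arriving at $0<\gamma_W\leq\tfrac12\bigl(\|u^+\|^2-\|u^-\|^2\bigr)$ and reading off both bounds $\|u^+\|>\|u^-\|$ and $\|u^+\|\geq\sqrt{2\gamma_W}$ from that single chain. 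Both routes are valid, and your observation that Lemma \ref{primeirageometria} must be read as an infimum over the sphere $\{\|v\|=\rho\}\cap E^{+}$ (not the full ball, which contains $0$) is exactly the right reading of a slightly loosely stated lemma.
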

\begin{proof}
By Lemma \ref{primeirageometria}, there is $\rho>0$ such that
$$
l:=\displaystyle\inf_{B_{\rho}(0)\cap E^{+}}\Phi_{W}>0.
$$
For all $u\in\M$, we know that $u^{+}\neq0$, then by Lemma \ref{funcaom},
$$
\Phi_{W}(u)\geq\Phi_{W}\left(\frac{\rho}{||u^{+}||}u^{+}\right)\geq l,
$$
from where it follows that
$$
\gamma_W =\inf_{\M}\Phi_{W}\geq l>0.
$$
In addition, for all $u\in\M$, 
$$
\gamma_W \leq\Phi_{W}(u)\leq\frac{1}{2}B(u,u)=\frac{1}{2}(||u^{+}||^{2}-||u^{-}||^{2}),
$$
implying that $||u^{+}||\geq\max\{||u^{-}||,\sqrt{2\gamma_W}\}$.
\end{proof}

\vspace{0.5 cm}

Next we will show a boundedness from above for $\gamma_W$ which will be crucial in our approach. However, before doing this we need to prove two technical lemmas. The first one is true for $N \geq 2$ and it has the following statement

\begin{lemma}\label{projecaolp}
	Consider $N \geq 2$ and let $u\in E^{+}\setminus\{0\}, p \in (2,2^{*})$ and $r, s_0>0$. Then there exists $\xi >0$ such that
	\begin{equation}\label{quaseimersao}
\xi	|su|_{p}\leq  |su+v|_{p},
	\end{equation}
	for all $s\geq s_{0}$ and $v\in E^{-}$ with  $||su+v||\leq r$.  
\end{lemma}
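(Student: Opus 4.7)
My plan is to argue by contradiction. If no such $\xi > 0$ exists, there are sequences $s_{n} \geq s_{0}$ and $v_{n} \in E^{-}$ with $\|s_{n}u + v_{n}\| \leq r$ such that $|s_{n}u + v_{n}|_{p}/|s_{n}u|_{p} \to 0$. The key structural observation is that $E^{+}$ and $E^{-}$ are orthogonal with respect to the inner product $\langle \cdot, \cdot \rangle$, so the Pythagorean identity gives
$$s_{n}^{2}\|u\|^{2} + \|v_{n}\|^{2} = \|s_{n}u + v_{n}\|^{2} \leq r^{2}.$$
Hence $s_{n} \in [s_{0}, r/\|u\|]$ and $\|v_{n}\| \leq r$ are both bounded, and after passing to subsequences $s_{n} \to s_{*}$ with $s_{0} \leq s_{*} \leq r/\|u\|$, and $v_{n} \rightharpoonup v_{*}$ weakly in $H^{1}(\R^{N})$, with $v_{*} \in E^{-}$ by weak closedness of $E^{-}$.

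Next I would invoke Sobolev embedding and weak lower semicontinuity. Since $H^{1}(\R^{N}) \hookrightarrow L^{p}(\R^{N})$ continuously for $p \in (2, 2^{*})$ in both cases $N = 2$ and $N \geq 3$, the sequence $s_{n}u + v_{n}$ is bounded in $L^{p}(\R^{N})$ and converges weakly to $s_{*}u + v_{*}$ there. Weak lower semicontinuity of the $L^{p}$ norm therefore gives
$$|s_{*}u + v_{*}|_{p} \leq \liminf_{n \to +\infty}|s_{n}u + v_{n}|_{p}.$$
On the other hand, boundedness of $(s_{n})$ makes $|s_{n}u|_{p} = s_{n}|u|_{p}$ bounded, so the contradiction hypothesis forces $|s_{n}u + v_{n}|_{p} \to 0$. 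Thus $s_{*}u + v_{*} = 0$ a.e.~in $\R^{N}$.

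Finally, from $s_{*}u \in E^{+}$ and $-v_{*} \in E^{-}$ the direct sum decomposition $H^{1}(\R^{N}) = E^{+} \oplus E^{-}$ forces $s_{*}u = 0$ and $v_{*} = 0$. But $s_{*} \geq s_{0} > 0$ and $u \neq 0$, a contradiction. The delicate ingredient is really the orthogonality identity for $\|s_{n}u + v_{n}\|^{2}$: without the orthogonality of $E^{+}$ and $E^{-}$ in the equivalent inner product, one could not pin down the boundedness of $s_{n}$ and $v_{n}$ simultaneously from $\|s_{n}u + v_{n}\| \leq r$ alone, and the compactness argument would break down. Everything else reduces to standard weak compactness in $H^{1}$ and weak lower semicontinuity of the $L^{p}$ norm.
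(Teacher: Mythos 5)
Your proof is correct, and it takes a genuinely different technical route from the paper's. Both arguments proceed by contradiction, and both ultimately hinge on the orthogonality of $E^{+}$ and $E^{-}$ and on weak compactness, but the mechanics diverge. You use the Pythagorean identity at the very start to bound $s_{n}$ and $\|v_{n}\|$ \emph{separately}, which lets you extract $s_{n}\to s_{*}\geq s_{0}$ and $v_{n}\rightharpoonup v_{*}$ directly; then you close the argument with weak lower semicontinuity of the $L^{p}$-norm (applicable because the embedding $H^{1}\hookrightarrow L^{p}$ carries weak convergence to weak convergence) and the fact that $E^{+}\cap E^{-}=\{0\}$. The paper instead divides through by $\alpha_{n}:=|s_{n}u|_{p}$, shows that the normalized sequence $w_{n}:=v_{n}/\alpha_{n}$ converges a.e.\ to $-u/|u|_{p}$ and is bounded in $E^{-}$ (the bound $\|v_{n}\|\leq\|s_{n}u+v_{n}\|$ is where orthogonality enters there), and then identifies the a.e.\ limit with the weak limit to conclude $u/|u|_{p}\in E^{-}$, a contradiction. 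Your version avoids the normalization and is a bit more transparent in isolating the role of orthogonality; the paper's version is marginally more flexible in that it never needs an upper bound on $s_{n}$, only $s_{n}\geq s_{0}$. Both correctly pinpoint the orthogonal decomposition as the essential structural ingredient.
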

\begin{proof}	
If the lemma does not hold, there are $s_{n}\geq s_{0}$ and $v_{n}\in E^{-}$ satisfying 
$$
||s_{n}u+v_{n}||\leq r \;\;\; \mbox{and} \;\;\; |s_{n}u|_{p}\geq n|s_{n}u+v_{n}|_{p}, \,\, \forall n \in \mathbb{N}.
$$
Setting $\alpha_{n}:=|s_{n}u|_{p}$, we obtain 
$$
\left|\frac{u}{|u|_{p}}+\frac{v_{n}}{\alpha_{n}}\right|_{p}\leq\frac{1}{n}.
$$
Thus, passing to a subsequence if necessary,
\begin{equation}\label{limitew}
w_{n}:=\frac{v_{n}}{\alpha_{n}}\to -\displaystyle\frac{u}{|u|
	}_p\ \ \ \ \ \text{a.e. in} \quad \R^{N}.
\end{equation}
On the other hand,
$$
||w_{n}||^{2}=\frac{||v_{n}||^{2}}{s_{n}^{2}|u|_{p}^{2}}\leq \frac{||s_{n}u+v_{n}||^{2}}{s_{0}^{2}|u|_{p}^{2}}\leq\frac{r^{2}}{s_{0}^{2}|u|_{p}^{2}} \quad  \forall n \in \mathbb{N},
$$
showing that $(w_{n})$ is a bounded sequence in $H^{1}(\R^{N})$. As $w_{n}\in E^{-}$, there is $w\in E^{-}$ such that for some subsequence (not renamed) $w_{n}{\rightharpoonup}w$ in $E^{-}$. Then by (\ref{limitew}), 
$$
\displaystyle\frac{u}{|u|}_{p}=-w\in E^{-},
$$
which is absurd, since $u\in E^{+}\setminus\{0\}$.
\end{proof}

\begin{lemma}\label{lemares}
	Let $u\in E^{+}\setminus\{0\}$ be fixed. Then there are $r, s_0>0$ satisfying 
	\begin{equation}\label{igualdaderes}
	\sup_{w \in \widehat{E}(u)}\Phi_{W}(w)=\sup_{\scriptsize\begin{array}{c}  ||su+v||\leq r \\ s\geq s_{0}, v\in E^{-} \end{array}}\Phi_{W}(su+v).
	\end{equation}
\end{lemma}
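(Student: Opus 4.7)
The plan is to combine Lemma \ref{lemacompacto} (which controls $\Phi_W$ outside a ball) with a pointwise upper bound for $\Phi_W$ on $\widehat{E}(u)$ that depends only on the scalar $s$ (to discard the small-$s$ regime). Apply Lemma \ref{lemacompacto} with the compact set $\K=\{u\}\subset E^{+}\setminus\{0\}$ to obtain $R>0$ such that $\Phi_W(w)\leq 0$ for every $w\in E(u)$ with $\|w\|>R$, and set $r:=R$. By Lemma \ref{funcaom}, $\M\cap\widehat{E}(u)=\{w_u\}$ is a singleton and $w_u$ is the unique global maximum of $\Phi_W|_{\widehat{E}(u)}$; define $M:=\Phi_W(w_u)$, which by Lemma \ref{cpositivo} satisfies $M\geq \gamma_W>0$.

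The key observation is that the positivity of $W$, $h$ and $k$ (granted by $(W_2)$ and $(C_1)$) together with the $\langle\cdot,\cdot\rangle$-orthogonality of $E^{+}$ and $E^{-}$ (which gives $(su+v)^{+}=su$ and $(su+v)^{-}=v$) yields, for every $s\geq 0$ and $v\in E^{-}$,
$$
\Phi_W(su+v)=\frac{s^{2}}{2}\|u\|^{2}-\frac{1}{2}\|v\|^{2}-\frac{1}{2}\int W(x)|su+v|^{2}\,dx-\frac{1}{q+1}\int h(x)|su+v|^{q+1}\,dx-\frac{1}{2^{*}}\int k(x)|su+v|^{2^{*}}\,dx\leq \frac{s^{2}}{2}\|u\|^{2}.
$$
Now pick $s_{0}\in(0,\sqrt{2M}/\|u\|)$. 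For every $s\in[0,s_{0}]$ and every $v\in E^{-}$ the estimate above gives $\Phi_W(su+v)\leq \frac{s_{0}^{2}}{2}\|u\|^{2}<M$, so the region with $s<s_{0}$ contributes strictly less than $M$ to the supremum over $\widehat{E}(u)$.

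Writing $w_u=s_u u+v_u$, the same bound applied to $w_u$ produces $M=\Phi_W(w_u)\leq \frac{s_u^{2}}{2}\|u\|^{2}$, hence $s_u\geq \sqrt{2M}/\|u\|>s_{0}$; moreover, $\Phi_W(w_u)=M>0$ together with Lemma \ref{lemacompacto} forces $\|w_u\|\leq R=r$. Consequently $(s_u,v_u)$ lies in the admissible set on the right-hand side of (\ref{igualdaderes}), so the restricted supremum is at least $M$, and the reverse inequality is immediate since that set is contained in $\widehat{E}(u)$. There is no genuine obstacle here; the substantive point is simply to recognize that the sign assumptions on $W$, $h$ and $k$ collapse all three nonlinear integrals into non-positive terms and produce the clean, $v$-independent upper bound $\Phi_W(su+v)\leq \frac{s^{2}}{2}\|u\|^{2}$, which is what isolates the small-$s$ regime from the maximizer.
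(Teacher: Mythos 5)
Your proof is correct and relies on the same two ingredients the paper uses: Lemma \ref{lemacompacto} to obtain the radius $r$, and the pointwise bound $\Phi_W(su+v)\leq \tfrac{s^2}{2}\|u\|^2$ (forced by the positivity of $W,h,k$ and of $\|v\|^2$) together with $\gamma_W>0$ to isolate the small-$s$ regime. The structural difference is that you argue directly: you invoke Lemma \ref{funcaom} to exhibit the actual maximizer $w_u=s_u u+v_u$ of $\Phi_W|_{\widehat{E}(u)}$ and show $s_u\geq\sqrt{2M}/\|u\|$ and $\|w_u\|<r$, so $w_u$ already lies in the constrained set. The paper instead takes an approximating sequence $(s_n u+v_n)$ and argues by contradiction, showing that if no $s_0$ worked then necessarily $s_n\to 0$, whence $0<\gamma_W\leq\limsup \Phi_W(s_nu+v_n)\leq\limsup\tfrac{1}{2}s_n^2\|u\|^2=0$, a contradiction. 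Your variant is slightly cleaner since Lemma \ref{funcaom} already guarantees the supremum is attained, which makes the extremizing-sequence machinery unnecessary; both routes are equally valid.
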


\begin{proof}
   From Lemma \ref{lemacompacto}, 
$$
\sup_{\widehat{E}(u)}\Phi_{W}=\sup_{\widehat{E}(u)\cap B_{r}(0)}\Phi_{W}
$$
for some $r>0$. 	 Hence, there are $(s_{n}) \subset [0,+\infty)$ and $(v_{n}) \subset E^{-}$ with $||s_{n}u+v_{n}||\leq r$ and
\begin{equation}\label{supremoaproximado}
\Phi_{W}(s_{n}u+v_{n})\rightarrow \sup_{\widehat{E}(u)\cap B_{r}(0)}\Phi_{W}.
\end{equation}

    Next, we will prove that there exists $s_{0}>0$ such that
   	$$
   	\sup_{\widehat{E}(u)\cap B_{r}(0)}\Phi_{W}=\sup_{\scriptsize\begin{array}{c}  ||su+v||\leq r \\ s\geq s_{0}, v\in E^{-} \end{array}}\Phi_{W}(su+v).
   	$$
Arguing by contradiction, suppose that for all $s_{0}>0$ 
   	 \begin{equation}\label{desigualdadeestritasup}
   	 \sup_{\widehat{E}(u)\cap B_{r}(0)}\Phi_{W}>\sup_{\scriptsize\begin{array}{c}  ||su+v||\leq r \\ s\geq s_{0}, v\in E^{-} \end{array}}\Phi_{W}(su+v).
   	 \end{equation}
Such supposition permit us to conclude that $s_{n}\to 0$. On the other hand, recalling that  
$$
\Phi_{W}(s_{n}u+v_{n})\leq \frac{1}{2}s_{n}^{2}||u||^{2},
$$	
we are leading to  
$$
0< \gamma_W= \inf_{\M}\Phi_{W} \leq\sup_{\widehat{E}(u)}\Phi_{W}=\Phi_{W}(s_{n}u+v_{n})+o_n(1) \leq \frac{1}{2}s_{n}^{2}||u||^{2}+o_n(1),
$$
which is a contradiction. This completes the proof. \end{proof}

\vspace{0.5 cm}

Now, we are ready to show the estimate from above involving the number $\gamma_W$ given in (\ref{gamma})

\begin{proposition}\label{limitacaoc}
	Assume the conditions of Theorem \ref{T1}. If $N\geq4$, then 
	\begin{equation} \label{estimativa}
	\displaystyle \gamma_W <\frac{1}{N|k_{0}|_{\infty}^{\frac{N-2}{2}}}S^{N/2}.
	\end{equation}
	If $N=3$, there is $\lambda^* >0$ such that the estimate (\ref{estimativa}) holds for $\displaystyle\inf_{x \in \R^{N}}h(x)>\lambda^*$.  
	
\end{proposition}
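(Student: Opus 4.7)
By Lemma~\ref{funcaom}, for every $u\in H^{1}(\R^{N})\setminus E^{-}$ the set $\M\cap\widehat{E}(u)$ reduces to a single point realizing the global maximum of $\Phi_W|_{\widehat{E}(u)}$; hence
\[
\gamma_W\;\leq\;\sup_{\widehat{E}(u)}\Phi_W.
\]
It therefore suffices to construct a single test family $u_\varepsilon$ with $\sup_{\widehat{E}(u_\varepsilon)}\Phi_W$ strictly below the threshold. I would do this by a Brezis--Nirenberg concentration at the maximum point $x_0$ furnished by $(C_2)$, taking
\[
u_\varepsilon(x)=\eta(x-x_0)\,U_\varepsilon(x-x_0),\qquad
U_\varepsilon(y)=\frac{[N(N-2)\varepsilon^{2}]^{(N-2)/4}}{(\varepsilon^{2}+|y|^{2})^{(N-2)/2}},
\]
with $\eta\in C_c^{\infty}(\R^N)$ equal to $1$ near $0$. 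The classical expansions give $\int|\nabla u_\varepsilon|^{2}=S^{N/2}+O(\varepsilon^{N-2})$, $\int|u_\varepsilon|^{2^{*}}=S^{N/2}+O(\varepsilon^{N})$ and dimension-dependent orders for $|u_\varepsilon|_{2}^{2}$ and $|u_\varepsilon|_{q+1}^{q+1}\sim \varepsilon^{N-(N-2)(q+1)/2}$; condition $(C_2)$ yields $\int k|u_\varepsilon|^{2^{*}}=k(x_0)\,S^{N/2}+o(\varepsilon^{N-2})$, and since $B(u_\varepsilon,u_\varepsilon)\to S^{N/2}>0$ we have $u_\varepsilon\notin E^{-}$ for $\varepsilon$ small.

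Next, by Lemma~\ref{lemares} the supremum can be localized to the set $\{su_\varepsilon+v:\;s\geq s_0,\;v\in E^{-},\;\|su_\varepsilon+v\|\leq r\}$. For fixed $s$, optimizing in $v\in E^{-}$ uses the negative-definiteness of $B$ on $E^{-}$, which provides a coercivity $B(v,v)\leq-c\|v\|^{2}$ that forces the optimal $v_\varepsilon(s)$ to have norm of smaller order than the principal contributions coming from $u_\varepsilon$. Up to a controllable error, this reduces the problem to maximizing the scalar function
\[
g_\varepsilon(s)=\tfrac{s^{2}}{2}A_\varepsilon-\tfrac{s^{q+1}}{q+1}B_\varepsilon-\tfrac{s^{2^{*}}}{2^{*}}C_\varepsilon,\qquad
A_\varepsilon=B(u_\varepsilon,u_\varepsilon)-\!\int W u_\varepsilon^{2},\;\;
B_\varepsilon=\!\int h|u_\varepsilon|^{q+1},\;\;
C_\varepsilon=\!\int k|u_\varepsilon|^{2^{*}}.
\]
A standard envelope/maximization argument produces
\[
\max_{s\geq 0} g_\varepsilon(s)\;\leq\;\frac{A_\varepsilon^{N/2}}{N\,C_\varepsilon^{(N-2)/2}}-\delta\,B_\varepsilon
\]
for some $\delta>0$ independent of $\varepsilon$.

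The main obstacle is to convert the above into the strict inequality against $\frac{S^{N/2}}{N|k_{0}|_{\infty}^{(N-2)/2}}$. Inserting the asymptotics together with $k(x_0)\geq|k_{0}|_\infty$ (consequence of $k=k_0+k_*$ with $k_*\geq 0$ and $(C_2)$) shows that $\frac{A_\varepsilon^{N/2}}{N\,C_\varepsilon^{(N-2)/2}}\to\frac{S^{N/2}}{N\,k(x_0)^{(N-2)/2}}\leq\frac{S^{N/2}}{N|k_{0}|_{\infty}^{(N-2)/2}}$ modulo errors of order $\max\{\varepsilon^{N-2},\,|u_\varepsilon|_{2}^{2}\}$; the strict gap must be delivered by the subcritical loss $\delta B_\varepsilon\sim\varepsilon^{N-(N-2)(q+1)/2}$. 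Since $q>1$, the exponent $N-(N-2)(q+1)/2$ is strictly less than $2$ whenever $N\geq 4$, so the subcritical loss beats both the $O(\varepsilon^{N-2})$ remainder and the $|u_\varepsilon|_{2}^{2}$ error (which is $\varepsilon^{2}$ for $N\geq 5$ and $\varepsilon^{2}|\log\varepsilon|$ for $N=4$); this yields the unconditional case $N\geq 4$. For $N=3$ one has $|u_\varepsilon|_{2}^{2}\sim\varepsilon$, which dominates the subcritical order $3-(q+1)/2$ unless $q>3$; the way around this is to fix $\varepsilon=\varepsilon_0$ (not letting $\varepsilon\to 0$) and choose $\lambda^{*}>0$ large enough that $\inf h\geq\lambda^{*}$ makes $\delta B_{\varepsilon_0}\geq\delta\lambda^{*}|u_{\varepsilon_0}|_{q+1}^{q+1}$ override all positive $\varepsilon_0$-corrections. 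The most delicate technical point throughout is the rigorous justification of the reduction to the scalar maximization; I would carry it out by a quadratic Taylor expansion of $\Phi_W$ around $s u_\varepsilon$, exploiting the strict descent in the $E^{-}$-direction encoded by Lemma~\ref{contadomaximo} to quantify the $v$-contribution as lower order.
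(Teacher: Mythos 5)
Your proposal is essentially the same as the paper's for $N\geq 4$: concentrate a Brezis--Nirenberg bubble at $x_0$, estimate $\sup_{\widehat{E}(u_\varepsilon)}\Phi_W$ by splitting off the $E^-$-contribution, and show the subcritical loss of order $\varepsilon^{N-(N-2)(q+1)/2}$ kills the $O(\varepsilon^{N-2})$ and $|u_\varepsilon|_2^2$ remainders. The paper handles the reduction to the $E^+$-axis by citing the inequality $\Phi(s\varphi_\varepsilon+u)\leq\Phi(s\varphi_\varepsilon)+O(\varepsilon^{N-2})$ uniformly in $u\in E^-$ (from Chabrowski--Szulkin), whereas you propose a Taylor expansion around $su_\varepsilon$; both are reasonable and land in the same place. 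One small simplification in the paper: since $\gamma_W\leq\gamma$ by $W\geq 0$, it passes to the periodic functional $\Phi$ and works with $k_0$ directly, avoiding the detour through $k(x_0)\geq|k_0|_\infty$.

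For $N=3$ the paper takes a cleaner route than you do. Rather than fixing a bubble $u_{\varepsilon_0}$ and sending $\lambda\to\infty$, it picks an arbitrary fixed $u\in E^+\setminus\{0\}$, drops the (nonnegative) critical term entirely, applies Lemma~\ref{lemares} to restrict the supremum to $s\geq s_0$, $\|su+v\|\leq r$, and then Lemma~\ref{projecaolp} to dominate $\int|su+v|^{q+1}$ from below by $\xi s^{q+1}\int|u|^{q+1}$; this reduces immediately to $\sup_{\widehat{E}(u)}\Phi\leq\max_{s\geq 0}(As^2-\lambda Bs^{q+1})\to 0$ as $\lambda\to\infty$, with no bubble, no critical-term expansion, and no reduction to a three-term scalar function needed. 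Your version would also work once the reduction to the scalar $g_{\varepsilon_0}$ is made rigorous, but it carries extra baggage: with $\varepsilon_0$ fixed you must still control the $E^-$-cross terms and the $\|v\|^2$ term uniformly in $\lambda$, which Lemma~\ref{projecaolp} handles cleanly in the paper's formulation. So: same idea (large $\lambda$ suppresses the level), but the paper's execution for $N=3$ is more economical; for $N\geq 4$ the two proofs are essentially identical.
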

\begin{proof} Since $\gamma_W \leq \gamma$, it is enough to prove that
$$
\displaystyle \gamma <\frac{1}{N|k_{0}|_{\infty}^{\frac{N-2}{2}}}S^{N/2}.
$$	
If $N \geq 4$ and $\inf_{x \in \mathbb{R}^N}h(x)=0$, the estimate is made in \cite[Proposition 4.2]{CS}. Next we will do the proof for $N \geq 4$ and $\inf_{x \in \mathbb{R}^N}h(x)>0$. To this end, we follow the same notation used in \cite{CS}. Let
$$
\varphi_{\epsilon}(x)=\frac{c_N\psi(x)\epsilon^{\frac{N-2}{2}}}{(\epsilon^2+|x|^{2})^{\frac{N-2}{2}}}
$$	
where $c_N=(N(N-2))^{\frac{N-2}{4}},  \epsilon>0$ and $\psi \in C_{0}^{\infty}(\mathbb{R}^N)$ is such that 
$$
\psi(x)=1 \quad \mbox{for} \quad |x| \leq \frac{1}{2} \quad \mbox{and} \quad \psi(x)=0 \quad \mbox{for} \quad |x| \geq 1.
$$
From \cite{W}, we know that the estimates below hold
\begin{equation}
\begin{array}{l} 
|\nabla \varphi_{\epsilon}|_{2}^{2}=S^{\frac{N}{2}}+O(\epsilon^{N-2}), \quad |\nabla \varphi_{\epsilon}|_{1}=O(\epsilon^{\frac{N-2}{2}}), \quad | \varphi_{\epsilon}|_{2^{*}}^{2^{*}}=S^{\frac{N}{2}}+O(\epsilon^{N}), \\
\mbox{}\\
|\varphi_{\epsilon}|_{2^*-1}^{2^*-1}=O(\epsilon^{\frac{N-2}{2}}), \quad |\varphi_{\epsilon}|_{q}^{q}=O(\epsilon^{\frac{N-2}{2}}), \quad |\varphi_{\epsilon}|_{1}=O(\epsilon^{\frac{N-2}{2}})
\end{array}
\end{equation}
and
\begin{equation}
|\varphi_{\epsilon}|_{2}^{2}=
\left\{
\begin{array}{l}
b\epsilon^{2}|log \epsilon|+ O(\epsilon^{2}), \quad \mbox{if} \quad \quad N=4 \\
b\epsilon^{2}+O(\epsilon^{N-2}), \quad \mbox{if} \quad N \geq 5.
\end{array}
\right.
\end{equation}

Adapting the same idea explored in \cite[Proposition 4.2]{CS}, for each $u \in E^{-}$ we obtain
$$
\Phi(s\varphi_{\epsilon}+u) \leq \Phi(s\varphi_{\epsilon}) + O(\epsilon^{N-2}), \quad \forall s \geq 0,
$$
where $O(\epsilon^{N-2})$ does not depend on $u$. Now, arguing as in \cite{Alves}, we get 
$$
\sup_{s \geq 0}\Phi(s\varphi_{\epsilon}) \leq \frac{1}{N|k_{0}|_{\infty}^{\frac{N-2}{2}}}S^{N/2} + O(\epsilon^{N-2})+c_1\int_{B_1(0)}|\varphi_{\epsilon}|^{2}\,dx-c_2\int_{B_1(0)}|\varphi_{\epsilon}|^{q+1}\,dx,
$$
implying that 
$$
\sup_{s \geq 0, \; u \in E^-}\Phi(s\varphi_{\epsilon}+u) \leq \frac{1}{N|k_{0}|_{\infty}^{\frac{N-2}{2}}}S^{N/2} +c_1\int_{B_1(0)}|\varphi_{\epsilon}|^{2}\,dx-c_2\int_{B_1(0)}|\varphi_{\epsilon}|^{q+1}\,dx + O(\epsilon^{N-2}). 
$$
Moreover, in \cite{Alves}, we also find that
$$
\lim_{\epsilon \to 0}\frac{1}{\epsilon^{N-2}}\left(c_1\int_{B_1(0)}|\varphi_{\epsilon}|^{2}\,dx-c_2\int_{B_1(0)}|\varphi_{\epsilon}|^{q+1}\,dx \right)=-\infty,
$$ 
from where it follows that there exists $\epsilon>0$ small enough verifying
$$
c_1\int_{B_1(0)}|\varphi_{\epsilon}|^{2}\,dx-c_2\int_{B_1(0)}|\varphi_{\epsilon}|^{q+1}\,dx + O(\epsilon^{N-2})<0, 
$$
and so,
$$
\sup_{s \geq 0, \; u \in E^-}\Phi(s\varphi_{\epsilon}+u) < \frac{1}{N|k_{0}|_{\infty}^{\frac{N-2}{2}}}S^{N/2}
$$
for some $\epsilon>0$ small enough.

Now, we will consider the case $N=3$. For each $u\in E^{+}\setminus\{0\}$, the Lemma \ref{lemares} guarantees the existence of $r,s_{0}>0$ satisfying 
$$
\sup_{w \in \widehat{E}(u)}\Phi(w)=\sup_{\scriptsize\begin{array}{c}  ||su+v||\leq r \\ s\geq s_{0}, v\in E^{-} \end{array}}\Phi(su+v).	
$$
Therefore, applying Lemma \ref{projecaolp}, 
$$
\begin{array}{ll}
\sup_{\widehat{E}(u)}\Phi= & \sup_{\scriptsize\begin{array}{c}  ||su+v|| 	 \leq r \\ s\geq s_{0}, v\in E^{-} \end{array}}\Phi(su+v) \\ 
 & \leq \sup_{\scriptsize\begin{array}{c}  ||su+v||\leq r \\ s\geq s_{0}, v\in E^{-} \end{array}}\left(\frac{s^{2}||u||^{2}}{2}-\frac{\lambda}{q+1}\int|su+v|^{q+1}dx\right) \\
& \leq\sup_{\scriptsize\begin{array}{c}  ||su+v||\leq r \\ s\geq s_{0}, v\in E^{-} \end{array}}\left(\frac{s^{2}||u||^{2}}{2}-\frac{\lambda \xi}{q+1}\int|su|^{q+1}dx\right) \\ 
&  \leq \max_{s \geq 0}(As^{2}-\lambda Bs^{q+1}),
\end{array}
$$
where 
$$
\lambda=\displaystyle\inf_{x \in \R^{N}}h(x), \quad  A=\frac{||u||^{2}}{2} \quad \mbox{and} \quad B=\frac{\xi}{q+1}\int|u|^{q+1}dx.
$$
As
$$
\max_{s \geq 0}(As^{2}-\lambda Bs^{q+1}) \to 0 \quad \mbox{as} \quad \lambda \to +\infty, 
$$
there is $\lambda^{*}>0$ such that
$$
\sup_{w \in \widehat{E}(u)}\Phi(w)  <\frac{1}{N|k_{0}|_{\infty}^{\frac{N-2}{2}}}S^{N/2} \quad \forall \lambda \geq \lambda^*,
$$
showing the desired result. \end{proof}

\begin{lemma}\label{limitacaosequenciaps}
Let  $(u_{n}) \subset H^{1}(\R^N)$ be a sequence verifying   
	$$
	\Phi_{W}(u_{n})\leq d,\ \ \ \pm\Phi_{W}'(u_{n})u_{n}^{\pm}\leq d||u_{n}||\ \ \ \text{and}\ \ \ \Phi_{W}'(u_{n})u_{n}\leq d||u_{n}||
	$$
for some $d>0$. Then, $(u_{n})$ is bounded in $H^{1}(\R^{N})$.
\end{lemma}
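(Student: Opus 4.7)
The plan is to exploit two different test functions: pairing $\Phi_W(u_n)$ with $\Phi_W'(u_n)u_n$ to control the nonlinear integrals, and then testing $\Phi_W'(u_n)$ against $u_n^+-u_n^-$ (instead of $u_n^+$ and $u_n^-$ separately) in order to bound $\|u_n\|^2$ while sidestepping the harmful cross term that $W$ would otherwise introduce.

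First I would write
$$
\Phi_W(u_n)-\frac{1}{2}\Phi_W'(u_n)u_n=\left(\frac{1}{2}-\frac{1}{q+1}\right)\int h(x)|u_n|^{q+1}\,dx+\left(\frac{1}{2}-\frac{1}{2^*}\right)\int k(x)|u_n|^{2^*}\,dx,
$$
which by the assumptions is bounded above by $d+(d/2)\|u_n\|$. Since $h_0$ and $k_0$ are continuous, $\Z^N$-periodic and positive, one has $\inf h_0,\inf k_0>0$; the positivity of $h_*,k_*$ then yields $h,k\geq c_0>0$ on $\R^N$. Consequently
$$
|u_n|_{q+1}^{q+1}\leq C(1+\|u_n\|)\quad\text{and}\quad|u_n|_{2^*}^{2^*}\leq C(1+\|u_n\|).
$$

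Next, testing $\Phi_W'(u_n)$ against $u_n^+-u_n^-$ and using the $\langle\cdot,\cdot\rangle$-orthogonality $\|u_n^+-u_n^-\|^2=\|u_n^+\|^2+\|u_n^-\|^2=\|u_n\|^2$ together with the pointwise identity $u_n(u_n^+-u_n^-)=(u_n^+)^2-(u_n^-)^2$, I obtain
$$
\Phi_W'(u_n)(u_n^+-u_n^-)=\|u_n\|^2-\int W\big[(u_n^+)^2-(u_n^-)^2\big]\,dx-\int h|u_n|^{q-1}u_n(u_n^+-u_n^-)\,dx-\int k|u_n|^{2^*-2}u_n(u_n^+-u_n^-)\,dx.
$$
By the hypotheses, $\Phi_W'(u_n)(u_n^+-u_n^-)=\Phi_W'(u_n)u_n^+-\Phi_W'(u_n)u_n^-\leq 2d\|u_n\|$. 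Discarding $-\int W(u_n^-)^2\leq 0$ and estimating the nonlinear terms via H\"older, the continuous embedding $|u_n^+-u_n^-|_p\leq C\|u_n^+-u_n^-\|=C\|u_n\|$ for $p\in[2,2^*]$, and the bounds of the first step, one arrives at
$$
\|u_n\|^2\leq 2d\|u_n\|+\int W(u_n^+)^2\,dx+C(1+\|u_n\|)^{(2q+1)/(q+1)}+C(1+\|u_n\|)^{(2\cdot 2^*-1)/2^*}.
$$
Finally, the spectral inequality $\|u_n^+\|^2\geq \bar\Lambda|u_n^+|_2^2$ coming from $(V_2)$, combined with $(W_2)$, gives $\int W(u_n^+)^2\,dx\leq(\Theta/\bar\Lambda)\|u_n\|^2$ with $1-\Theta/\bar\Lambda>0$; since both exponents $(2q+1)/(q+1)$ and $(2\cdot 2^*-1)/2^*$ are strictly less than $2$, the resulting inequality forces $(u_n)$ to be bounded.

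The main obstacle is the $W$-induced coupling between $u_n^+$ and $u_n^-$. A naive separate treatment of the two components using the one-sided estimates $\pm\Phi_W'(u_n)u_n^\pm\leq d\|u_n\|$ produces a cross term $\int W u_n^+ u_n^-$ that cannot be absorbed, since $(W_2)$ only compares $\Theta$ with $\bar\Lambda$ and not with $|\underline\Lambda|$. The choice of the test direction $u_n^+-u_n^-$ is precisely what makes this cross term vanish pointwise, leaving only the combination $\int W[(u_n^+)^2-(u_n^-)^2]$ that is cleanly controlled by $(V_2)$ and $(W_2)$.
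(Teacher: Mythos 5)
Your proof is correct and follows essentially the same two-step strategy as the paper: first use $\Phi_W(u_n)-\tfrac12\Phi_W'(u_n)u_n$ to control the nonlinear integrals, then exploit the pointwise identity $u_n(u_n^+-u_n^-)=(u_n^+)^2-(u_n^-)^2$ together with $(W_2)$ so that $\int W(x)u_n(u_n^+-u_n^-)\,dx\le(\Theta/\overline\Lambda)\|u_n\|^2$ absorbs into the left side. One small remark on the framing: the paper does estimate $\|u_n^+\|^2$ and $\|u_n^-\|^2$ \emph{separately} from $\pm\Phi_W'(u_n)u_n^\pm$ and then adds; the cross terms $\int Wu_n^+u_n^-\,dx$ cancel upon addition, so the ``separate treatment'' you describe as problematic is in fact what the paper does, and it is algebraically identical to testing directly against $u_n^+-u_n^-$. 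The one genuine difference is in step one: you bound $|u_n|_{q+1}^{q+1}$ and $|u_n|_{2^*}^{2^*}$ directly, which requires $\inf h,\inf k>0$ (which you justify from $(C_1)$ via periodicity and continuity of $h_0,k_0$), whereas the paper splits $f=g+j$ according to $|t|\le1$ versus $|t|>1$ and bounds $|g(\cdot,u_n)|_r^r+|j(\cdot,u_n)|_s^s$ with $r=(q+1)/q$, $s=2^*/(2^*-1)$; that decomposition does not use any lower bound on $h$ or $k$ and is thus more robust (it accommodates the scenario $\inf h=0$ contemplated in $(C_3)$). Your version is a legitimate simplification under the strict-positivity reading of $(C_1)$, and the final coercivity argument, using $1-\Theta/\overline\Lambda>0$ and the fact that both exponents $(2q+1)/(q+1)$ and $(2\cdot2^*-1)/2^*$ are strictly less than $2$, matches the paper's conclusion.

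Two minor points of rigor: the bound ``$C(1+\|u_n\|)^{(2q+1)/(q+1)}$'' should more precisely read $C(1+\|u_n\|)^{q/(q+1)}\|u_n\|$, which is asymptotically the same; and your claim that $\Phi_W(u_n)-\tfrac12\Phi_W'(u_n)u_n\le d+(d/2)\|u_n\|$ actually requires an upper bound on $-\Phi_W'(u_n)u_n$, opposite in sign to the stated hypothesis $\Phi_W'(u_n)u_n\le d\|u_n\|$. The paper's own proof has the same sign issue (compare with the corrected statement $-\Phi_W'(u_n)u_n\le d$ in the $N=2$ analogue, Lemma~\ref{limitacaosequenciapsn2}), so this is an inherited typo rather than a gap you introduced, and it is harmless in the intended application to $(PS)$ sequences.
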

\begin{proof}
	In the sequel, let $\theta:=\chi_{[-1,1]}:\R\rightarrow\R$ be the characteristic function on interval $[-1,1]$,  
	$$
	g(x,t):=\theta(t)f(x,t)\quad \mbox{and} \quad j(x,t):=(1-\theta(t))f(x,t),
	$$
	where $f(x,t)=h(x)|t|^{q-1}t+k(x)|t|^{2^{*}-2}t$. Fixing 
	$$
	r:=\frac{q+1}{q} \quad \mbox{and} \quad s=\frac{2^{*}}{2^{*}-1},
	$$
	it follows that 
	$$
	(r-1)q=(s-1)(2^{*}-1)=1.
	$$
	Note that
	$$
	\begin{array}{ll}
	|g(x,t)|^{r-1} & =   \theta(t)^{r-1}|f(x,t)|^{r-1}\leq\theta(t)(|h|_{\infty}|t|^{q}+|k|_{\infty}|t|^{2^{*}-1})^{r-1}\ \\
	& \leq \theta(t) 2^{r-1}C(|t|^{(r-1)q}+|t|^{(r-1)(2^{*}-1)})\leq K|t|
	\end{array}
	$$
for some $C>0$ sufficiently large.  So
	\begin{equation}\label{desigualdadeg}
	|g(x,t)|^{r-1}\leq C|t|,\forall\ (x,t)\in\R^{N+1}. 
	\end{equation}
	Analogously, 
	\begin{equation}\label{desigualdadej}
	|j(x,t)|^{s-1}\leq C|t|, \forall\ (x,t)\in\R^{N+1}.
	\end{equation}
	Since $tf(x,t)\geq0,\ (x,t)\in\R^{N+1}$, the inequalities  (\ref{desigualdadeg}) and (\ref{desigualdadej}) give 
	\begin{equation}\label{desigualdadesgej}
	|g(x,t)|^{r}\leq C t g(x,t) \,\, \text{ and } \,\, |j(x,t)|^{s}\leq C t j(x,t),\ \ \ \forall (x,t)\in\R^{N+1}.
	\end{equation}
	The last two inequalities lead to
$$
\begin{array}{l}
	d+d||u_{n}||\geq\Phi_{W}(u_{n})-\frac{1}{2}\Phi_{W}'(u_{n})u_{n}= \\
	\left(\frac{1}{2}-\frac{1}{q+1}\right)\int h(x)|u|^{q+1}dx+\left(\frac{1}{2}-\frac{1}{2^{*}}\right)\int k(x)|u|^{2^{*}}dx  \geq \\ 
	
	\left(\frac{1}{2}-\frac{1}{q+1}\right)\int h(x)|u|^{q+1}dx+\left(\frac{1}{2}-\frac{1}{q+1}\right)\int k(x)|u|^{2^{*}}dx=\\
\left(\frac{1}{2}-\frac{1}{q+1}\right)\int (g(x,u_{n})u_{n}+j(x,u_{n})u_{n})dx\geq \\
	\left(\frac{1}{2}-\frac{1}{q+1}\right)\frac{1}{C}\left(\int
	|g(x,u_{n})|^{r}dx+\int |j(x,u_{n})|^{s}dx\right),
\end{array}
$$	
from where it follows 
	\begin{equation}
	|g(x,u_{n})|^{r}_{r}+|j(x,u_{n})|^{s}_{s}\leq C(1+||u_{n}||)
	\end{equation}
for some $C>0$. On the other hand,
	$$
\begin{array}{l}
	||u_{n}^{-}||^{2}=-\Phi'_{W}(u_{n})u_{n}^{-}-\int W(x)u_{n}u_{n}^{-}dx-\int f(x,u_{n})u_{n}^{-} \\
	\leq d||u_{n}^{-}||-\int W(x)u_{n}u_{n}^{-}+
	+|g(x,u_{n})|_{r}|u_{n}^{-}|_{q+1}+|j(x,u_{n})|_{s}|u_{n}^{-}|_{2^{*}} \\ 
\leq	-\int W(x)u_{n}u_{n}^{-}dx+C||u_{n}^{-}||\left(1+|g(x,u_{n})|_{r}+|j(x,u_{n})|_{s}\right) \\
\leq	-\int W(x)u_{n}u_{n}^{-}dx+C||u_{n}^{-}||\left(1+\left(1+||u_{n}||\right)^{1/r}+\left(1+||u_{n}||\right)^{1/s}\right) \\
\leq	-\int W(x)u_{n}u_{n}^{-}dx+C||u_{n}^{-}||\left(1+||u_{n}||^{1/r}+||u_{n}||^{1/s}\right).
\end{array}
$$
Thus,
	\begin{equation}\label{desigualdadeunmenos}
	||u_{n}^{-}||^{2}\leq-\int W(x)u_{n}u_{n}^{-}dx+C||u_{n}||\left(1+||u_{n}||^{1/r}+||u_{n}||^{1/s}\right).
	\end{equation}
The same argument works to prove that 
	\begin{equation}\label{desiguadadeunmais}
	||u_{n}^{+}||^{2}\leq\int W(x)u_{n}u_{n}^{+}dx+C||u_{n}||\left(1+||u_{n}||^{1/r}+||u_{n}||^{1/s}\right).
	\end{equation}
Recalling that $||u_{n}||^{2}=||u_{n}^{+}||^{2}+||u_{n}^{-}||^{2}$, the estimates 	(\ref{desigualdadeunmenos}) and (\ref{desiguadadeunmais}) combined give 
		\begin{equation}\label{desigualdadelimitacao}
	||u_{n}||^{2}\leq\int W(x)u_{n}(u_{n}^{+}-u_{n}^{-})dx+C||u_{n}||\left(1+||u_{n}||^{1/r}+||u_{n}||^{1/s}\right).
	\end{equation}
On the other hand, we know that
	$$
	\begin{array}{ll}
	\int W(x)u_{n}(u_{n}^{+}-u_{n}^{-})dx & =\int W(x)(u_{n}^{+}+u_{n}^{-})(u_{n}^{+}-u_{n}^{-})dx\\
	& =\int W(x)(u_{n}^{+})^{2}dx-\int W(x)(u_{n}^{-})^{2}dx \\
	& \leq\int W(x)(u_{n}^{+})^{2}dx\leq\Theta\int (u_{n}^{+})^{2}dx \leq \frac{ \Theta}{\overline{\Lambda}}||u_{n}^{+}||^{2}\\
	\end{array}
	$$
that is, 
	\begin{equation}\label{desigualdadeW}
	\int W(x)u_{n}(u_{n}^{+}-u_{n}^{-})dx\leq \frac{\Theta}{\overline{\Lambda}}||u_{n}||^{2},
	\end{equation}
where $\overline{\Lambda}$ was fixed in $(W_2)$. Now,  (\ref{desigualdadelimitacao}) combines with (\ref{desigualdadeW}) to give 
	$$
	\left(1-\frac{\Theta}{\overline{\Lambda}}\right)||u_{n}||^{2}\leq K_{6}||u_{n}||\left(1+||u_{n}||^{1/r}+||u_{n}||^{1/s}\right).
	$$
	This concludes the verification of Lemma \ref{limitacaosequenciaps}.
\end{proof}

As a byproduct of the last lemma, we have the corollaries below 
\begin{corollary} \label{LIM}
	If $(u_{n})$ is a (PS) sequence for $\Phi_{W}$, then $(u_{n})$ is bounded. In addition, if $u_{n}\rightharpoonup u$ in $H^{1}(\R^{N})$, then $u$ is a solution of $(\ref{problema}).$
\end{corollary}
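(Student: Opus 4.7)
The plan is to derive both conclusions by combining the previous Lemma \ref{limitacaosequenciaps} with standard weak-limit arguments. For the boundedness, let $(u_n)$ be a $(PS)$ sequence, so that $\Phi_W(u_n)\to c$ and $\|\Phi_W'(u_n)\|_{(H^1)^*}\to 0$. For $n$ sufficiently large one has $\Phi_W(u_n)\leq |c|+1$ and $|\Phi_W'(u_n)v|\leq \|v\|$ for every $v\in H^1(\R^N)$. Choosing $v=u_n$, $v=u_n^+$, and $v=-u_n^-$, and using $\|u_n^{\pm}\|\leq \|u_n\|$, the three estimates required in Lemma \ref{limitacaosequenciaps} hold with $d=\max\{|c|+1,1\}$. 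The lemma then yields boundedness of $(u_n)$ in $H^1(\R^N)$.

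Now assume $u_n\rightharpoonup u$ in $H^1(\R^N)$. Passing to a subsequence, the Rellich--Kondrachov theorem gives $u_n\to u$ in $L^p_{\text{loc}}(\R^N)$ for every $1\leq p<2^*$, and $u_n(x)\to u(x)$ almost everywhere. To show that $u$ solves (\ref{problema}), I will prove that $\Phi_W'(u)\varphi=0$ for every $\varphi\in C_0^\infty(\R^N)$; density of $C_0^\infty(\R^N)$ in $H^1(\R^N)$ then completes the argument. Fix such a $\varphi$ with support in some ball $B_R(0)$. From the $(PS)$ condition, $\Phi_W'(u_n)\varphi\to 0$, and I pass to the limit term by term: the gradient piece $\int\nabla u_n\cdot\nabla\varphi$ converges by weak convergence of $\nabla u_n$ in $L^2$; the terms $\int V(x)u_n\varphi$ and $\int W(x)u_n\varphi$ converge because $V\varphi$ and $W\varphi$ lie in $L^2(\R^N)$ with compact support; the subcritical term $\int h(x)|u_n|^{q-1}u_n\varphi$ converges by strong convergence $u_n\to u$ in $L^{q+1}(B_R(0))$, since $q+1<2^*$, together with boundedness of $h$.

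The only nontrivial point is the critical term $\int k(x)|u_n|^{2^*-2}u_n\varphi\, dx$. Here strong $L^{2^*}$ convergence fails, but $(u_n)$ is bounded in $L^{2^*}(\R^N)$, hence $(|u_n|^{2^*-2}u_n)$ is bounded in $L^{2^*/(2^*-1)}(\R^N)$. Combining this uniform bound with the pointwise convergence $|u_n|^{2^*-2}u_n\to |u|^{2^*-2}u$ a.e., a standard result (see, e.g., Lions' lemma on weak convergence of nonlinear terms) gives $|u_n|^{2^*-2}u_n\rightharpoonup |u|^{2^*-2}u$ in $L^{2^*/(2^*-1)}(\R^N)$. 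Since $k\varphi\in L^{2^*}(\R^N)$, testing this weak convergence against $k\varphi$ yields the desired limit. Consequently $\Phi_W'(u)\varphi=0$ for every $\varphi\in C_0^\infty(\R^N)$, and by density $u$ is a weak solution of (\ref{problema}). The main (and only) delicate point is the passage to the limit in the critical nonlinearity, which is handled exactly via the weak $L^{2^*/(2^*-1)}$-convergence argument just described.
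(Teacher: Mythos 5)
Your proof is correct and takes essentially the approach the paper intends (the paper states Corollary \ref{LIM} as a byproduct of Lemma \ref{limitacaosequenciaps} without further detail). Boundedness follows directly from the lemma once you verify its hypotheses with $d=\max\{|c|+1,1\}$ as you do, and the passage to the limit in $\Phi_W'(u_n)\varphi\to\Phi_W'(u)\varphi$ is the standard argument: weak $L^2$ convergence of $\nabla u_n$, Rellich on the lower-order and subcritical terms, and for the critical term the weak convergence $|u_n|^{2^*-2}u_n\rightharpoonup |u|^{2^*-2}u$ in $L^{2^*/(2^*-1)}$ coming from boundedness plus a.e.\ convergence, paired against $k\varphi\in L^{2^*}$.
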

\begin{corollary}\label{corolariocoercividadeemM}
	$\Phi_{W}$ is coercive on $\M$, that is, $\Phi_{W}(u)\rightarrow+\infty$ as $||u||\rightarrow+\infty$ and $u\in\M$.
\end{corollary}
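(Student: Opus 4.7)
The plan is to argue by contradiction and reduce directly to Lemma \ref{limitacaosequenciaps}. Suppose the corollary fails. Then there exist $d>0$ and a sequence $(u_n)\subset \M$ with $\|u_n\|\to+\infty$ and $\Phi_W(u_n)\leq d$ for every $n$.

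Next I would exploit the defining conditions of $\M$. Since $u_n\in\M$, we have $\Phi_W'(u_n)u_n=0$ and $\Phi_W'(u_n)v=0$ for all $v\in E^-$. The second condition, applied with $v=u_n^-\in E^-$, yields $\Phi_W'(u_n)u_n^-=0$, and consequently
$$
\Phi_W'(u_n)u_n^+=\Phi_W'(u_n)u_n-\Phi_W'(u_n)u_n^-=0.
$$
In particular, for any $d>0$, the three inequalities
$$
\Phi_W'(u_n)u_n\leq d\|u_n\|,\qquad +\Phi_W'(u_n)u_n^+\leq d\|u_n\|,\qquad -\Phi_W'(u_n)u_n^-\leq d\|u_n\|
$$
are trivially satisfied (both sides are $0$ or nonpositive). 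Combined with the assumed bound $\Phi_W(u_n)\leq d$, this shows that $(u_n)$ fulfills every hypothesis of Lemma \ref{limitacaosequenciaps}.

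Invoking that lemma, we conclude that $(u_n)$ is bounded in $H^1(\R^N)$, which contradicts the assumption $\|u_n\|\to+\infty$. Hence $\Phi_W(u)\to+\infty$ as $\|u\|\to+\infty$ with $u\in\M$, as required. I do not foresee any real obstacle: the proof is essentially a tautological application of Lemma \ref{limitacaosequenciaps}, the only substantive point being the observation that membership in $\M$ automatically forces the derivative conditions of that lemma to hold with equality.
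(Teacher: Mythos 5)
Your proof is correct and is precisely the intended ``byproduct'' argument the paper alludes to: for $u_n\in\M$, the membership conditions $\Phi_W'(u_n)u_n=0$ and $\Phi_W'(u_n)v=0$ for all $v\in E^-$ (in particular $v=u_n^-$, hence also $\Phi_W'(u_n)u_n^+=0$) make all the derivative inequalities of Lemma \ref{limitacaosequenciaps} hold trivially, so a bound $\Phi_W(u_n)\le d$ forces boundedness of $(u_n)$, contradicting $\|u_n\|\to+\infty$. This is the same route as the paper; nothing to add.
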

The Lemma \ref{funcaom} permits to consider a function 
\begin{equation} \label{m}
m:E^{+}\setminus\{0\}\rightarrow\M\ \text{ where }\ m(u)\in\hat{E}(u)\cap\M, \quad \forall u\in E^{+}\setminus\{0\}.
\end{equation}

The above function will be crucial in our approach. Next, we establish its continuity. 

\begin{lemma}\label{funcaomcontinua}
	The function $m$ is continuous.
\end{lemma}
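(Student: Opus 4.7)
The plan is to verify continuity sequentially: fix $u \in E^{+} \setminus \{0\}$ and $u_{n} \to u$ in $E^{+}$, write $w_{n} := m(u_{n}) = s_{n} u_{n} + v_{n}$ with $s_{n} \geq 0$ and $v_{n} \in E^{-}$, and show $w_{n} \to m(u)$ strongly in $H^{1}(\R^{N})$. First I would settle boundedness: since $u \neq 0$, the set $\K := \{u_{n} : n \text{ large}\} \cup \{u\}$ is compact in $E^{+} \setminus \{0\}$, so Lemma \ref{lemacompacto} yields $R > 0$ such that $\Phi_{W} \leq 0$ on $E(u_{n}) \setminus B_{R}(0)$ for every $n$; as $\Phi_{W}(w_{n}) \geq \gamma_{W} > 0$, the sequence $(w_{n})$ is bounded in $H^{1}(\R^{N})$, and hence so are $(s_{n})$ and $(v_{n})$. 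Lemma \ref{cpositivo} further gives $s_{n} \|u_{n}\| = \|w_{n}^{+}\| \geq \sqrt{2\gamma_{W}}$, so $s_{n}$ is bounded below by a positive constant. Passing to a subsequence, $s_{n} \to s_{*} > 0$ and $v_{n} \rightharpoonup v_{*}$ in $E^{-}$, so $w_{n} \rightharpoonup \bar{w} := s_{*} u + v_{*} \in \hat{E}(u)$.

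Next I would sandwich $\Phi_{W}(w_{n})$. Writing $m(u) = s(u) u + v(u)$, the test element $\tilde{w}_{n} := s(u) u_{n} + v(u) \in \hat{E}(u_{n})$ converges strongly to $m(u)$, so $\Phi_{W}(\tilde{w}_{n}) \to \Phi_{W}(m(u))$ by continuity. The maximality characterization of $w_{n}$ on $\hat{E}(u_{n})$ (Lemma \ref{funcaom}) gives $\Phi_{W}(w_{n}) \geq \Phi_{W}(\tilde{w}_{n})$, hence $\liminf_{n} \Phi_{W}(w_{n}) \geq \Phi_{W}(m(u))$. For the reverse bound I decompose
$$
\Phi_{W}(w_{n}) = \tfrac{1}{2}\|s_{n} u_{n}\|^{2} - \tfrac{1}{2}\|v_{n}\|^{2} - \tfrac{1}{2}\int W(x) w_{n}^{2}\,dx - \int F(x, w_{n})\,dx,
$$
and treat each term: $\|s_{n} u_{n}\|^{2} \to \|s_{*} u\|^{2}$ by strong convergence; $\int W w_{n}^{2} \to \int W \bar{w}^{2}$ by $L^{2}_{\mathrm{loc}}$-compactness of $(w_{n})$ combined with $W(x) \to 0$ at infinity (splitting the integral on $B_{R}(0)$ and its complement); $\liminf_{n} \|v_{n}\|^{2} \geq \|v_{*}\|^{2}$ by weak lower semicontinuity; and since $F(x, t) = \frac{h(x)}{q+1}|t|^{q+1} + \frac{k(x)}{2^{*}}|t|^{2^{*}} \geq 0$, Fatou's lemma gives $\liminf_{n} \int F(x, w_{n}) \geq \int F(x, \bar{w})$. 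Adding up, $\limsup_{n} \Phi_{W}(w_{n}) \leq \Phi_{W}(\bar{w})$.

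To close the argument I would invoke uniqueness: because $\bar{w} \in \hat{E}(u)$, Lemma \ref{funcaom} gives $\Phi_{W}(\bar{w}) \leq \Phi_{W}(m(u))$, so the chain
$$
\Phi_{W}(m(u)) \leq \liminf_{n} \Phi_{W}(w_{n}) \leq \limsup_{n} \Phi_{W}(w_{n}) \leq \Phi_{W}(\bar{w}) \leq \Phi_{W}(m(u))
$$
collapses to equalities, forcing $\bar{w} = m(u)$ by the uniqueness clause in Lemma \ref{funcaom}. Moreover, $\lim_{n} \Phi_{W}(w_{n}) = \Phi_{W}(\bar{w})$ together with the individual one-sided estimates above forces each to be an equality; in particular $\|v_{n}\|^{2} \to \|v_{*}\|^{2}$, which combined with $v_{n} \rightharpoonup v_{*}$ in the Hilbert space $E^{-}$ (equipped with the equivalent inner product from \eqref{PHIW}) yields strong convergence $v_{n} \to v_{*}$. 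Thus $w_{n} = s_{n} u_{n} + v_{n} \to s_{*} u + v_{*} = m(u)$ strongly. As this holds along an arbitrary subsequence, the whole sequence $m(u_{n})$ converges to $m(u)$, proving continuity.

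The delicate point is the interaction between the indefinite quadratic part (whose negative piece $-\frac{1}{2}\|v^{-}\|^{2}$ is only weakly upper semicontinuous, with the wrong sign for a direct Fatou argument on the full functional) and the nonlinear term: the sandwich works precisely because the sign $F \geq 0$ aligns with the sign of $-\|v^{-}\|^{2}$, so that all "bad" directions of semicontinuity point the same way, allowing strong convergence to be extracted from equality in the limit.
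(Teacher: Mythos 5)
Your proposal is correct and follows essentially the same route as the paper: normalize/bound via Lemma \ref{lemacompacto}, use maximality of $m$ on $\hat E(u_n)$ together with strong convergence of a test element to get the lower estimate on $\liminf \Phi_W(m(u_n))$, use Fatou and weak lower semicontinuity of the $E^-$-norm for the upper estimate, squeeze to equalities, and then deduce strong convergence of $v_n$ from $v_n \rightharpoonup v_*$ plus $\|v_n\| \to \|v_*\|$, closing with the uniqueness in Lemma \ref{funcaom}. The only cosmetic difference is that you bound $(w_n)$ directly (since $\Phi_W(w_n)\geq\gamma_W>0$ forces $w_n\in B_R(0)$), whereas the paper first bounds $\Phi_W(m(u_n))$ and then invokes coercivity on $\mathcal M$; both are fine.
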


\begin{proof}
Suppose $u_{n} \to u$ in $E^{+}\setminus\{0\}$. Since 
$$
\frac{u_{n}}{||u_{n}||} \to \frac{u}{||u||},\quad  m\left(\frac{u_{n}}{||u_{n}||}\right)=m(u_{n}) \quad \mbox{and} \quad m\left(\frac{u}{||u||}\right)=m(u),
$$
without loss of generality, we may assume that $||u_{n}||=||u||=1$.

There are $t_{n}, t \in [0,+\infty)$ and $v_{n}, v \in E^{-}$ such that 
$$
m(u_{n})=t_{n}u_{n}+v_{n} \quad \mbox{and} \quad m(u)=tu+v.
$$
Note that $K:=\{u_{n}\}_{n\in\N}\cup\{u\}$ is a compact set. Thereby, by Lemma \ref{lemacompacto}, there exists $R>0$ such that $\Phi_{W}(w)\leq0$ in $E(z)\setminus B_{R}(0)$ for all $z\in K$.  Hence, 
$$
0<\Phi_{W}(m(u_{n}))=\sup_{\widehat{E}(u_n)}\Phi_{W}=\sup_{\widehat{E}(u_n)\cap B_{R}(0)}\Phi_{W}\leq\sup_{w\in\widehat{E}(u_n)\cap B_{R}(0)}\frac{1}{2}||w^{+}||^{2}\leq\frac{1}{2}R^{2},
$$
showing that $(\Phi_{W}(m(u_{n})))$ is a bounded sequence, and so, by Corollary \ref{corolariocoercividadeemM}, $(m(u_{n}))$ is a bounded sequence. The boundedness of $(m(u_{n}))$ implies that $(t_{n})$ and $(v_{n})$ are also bounded. Then, for some subsequence (not renamed),  
\begin{equation}\label{limitefracovn}
t_{n} \to  t_{0} \;\; \text{ in } \;\; \R, \,\,  v_{n} \rightharpoonup v_{0} \;\; \text{ in } \;\; E^{-} \quad \mbox{and} \quad  m(u_{n}) \rightharpoonup t_{0}u+v_{0} \;\; \text{ in } \;\; E^{-}.
\end{equation}
Recalling that $\Phi_{W}(m(u_{n}))\geq\Phi_{W}(tu_{n}+v)$, we obtain 
$$
\displaystyle\liminf_{n\to +\infty}\Phi_{W}(m(u_{n}))\geq\Phi_{W}(m(u)).
$$ 
Thus, the  Fatou's Lemma combined with the weakly lower semicontinuous of the norm gives  
$$
\begin{array}{ll}
\displaystyle\Phi_{W}(m(u))& \leq\liminf_{n\to +\infty}\Phi_{W}(m(u_{n}))\leq \limsup_{n\to +\infty}\Phi_{W}(m(u_{n}))  \\
& \limsup_{n\to +\infty}\left[\frac{1}{2}t_{n}^{2}||u_{n}||^{2}-\frac{1}{2}||v_{n}||^{2}-\frac{1}{2}\int W(x)m(u_{n})^{2}dx \right. \\
& \left. - \frac{1}{q+1}\int h(x)|m(u_{n})|^{q+1}dx 
-\frac{1}{2^{*}}\int k(x)|m(u_{n})|^{2^{*}}dx\right] \\
& \leq \frac{1}{2}t_{0}^{2}-\frac{1}{2}||v_{0}||^{2}-\vspace{0.3cm}
 \displaystyle-\frac{1}{2}\int W(x)|t_{0}u+v_{0}|^{2}dx\\
& -\frac{1}{q+1}\int h(x)|t_{0}u+v_{0}|^{q+1}dx-\frac{1}{2^{*}}\int k(x)|t_{0}u+v_{0}|^{2^{*}}dx\\
& =\Phi_{W}(t_{0}u+v_{0})\leq\Phi_{W}(m(u)),
\end{array}
$$
implying that 
\begin{equation}\label{conclusaom}
\lim_{n\to +\infty}||v_{n}||=||v_{0}|| \quad \mbox{and} \quad \Phi_{W}(t_{0}u+v_{0})=\Phi_{W}(m(u)).
\end{equation}
From (\ref{limitefracovn}) and (\ref{conclusaom}),  $v_{n} \to v_{0}$ in $E^{-}$. Now, the Lemma \ref{contadomaximo} together with (\ref{conclusaom}) guarantees that $t_{0}u+v_{0}=m(u)$. Consequently,
$$
m(u_{n})=t_{n}u_{n}+v_{n} \to t_{0}u+v_{0}=m(u),
$$
finishing the proof. 
\end{proof}

Hereafter, we consider the functional $\hat{\Psi}:E^{+}\setminus\{0\}\rightarrow\R$ defined by $\hat{\Psi}(u):=\Phi_{W}(m(u))$. We know that $\hat{\Psi}$ is continuous by previous lemma. In the sequel, we denote by $\Psi:S^{+}\rightarrow\R$ the restriction  of $\hat{\Psi}$ to $S^{+}=B_1(0) \cap E^+$. 

The next three results establish some important properties involving the functionals $\Psi$ and $\hat{\Psi}$ and their proofs follow as in \cite{SW}. 

\begin{lemma}\label{funcaopsidiferenciavel}
	$\hat{\Psi}\in C^{1}(E^{+}\setminus\{0\},\R)$, and
	\begin{equation}\label{igualdadediferencial}
	\hat{\Psi}'(y)z=\frac{||m(y)^{+}||}{||y||}\Phi_{W}'(m(y))z,\ \forall y,z\in E^{+},\ y\neq 0.
	\end{equation}
\end{lemma}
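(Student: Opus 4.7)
The plan is to adapt the Szulkin--Weth argument \cite{SW}: compute the one-sided Gateaux derivatives of $\hat{\Psi}$ via a sandwich estimate that exploits the maximizing property of $m(y)$, then upgrade to $C^{1}$ using continuity of $m$.

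Fix $y\in E^{+}\setminus\{0\}$ and write $m(y)=s(y)\,y+v(y)$ with $s(y)\geq 0$ and $v(y)\in E^{-}$; since $y\in E^{+}$ this coincides with the spectral decomposition $m(y)^{+}+m(y)^{-}$, so $\|m(y)^{+}\|=s(y)\|y\|$. For $z\in E^{+}$ and small $t\neq 0$, write $m(y+tz)=s_{t}(y+tz)+v_{t}$ analogously. The key observation is that $s_{t}y+v_{t}\in \hat{E}(y)$ while $s(y)(y+tz)+v(y)\in \hat{E}(y+tz)$, because in each case the scalar coefficient is nonnegative and the remaining summand lies in $E^{-}$. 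Lemma \ref{funcaom} then supplies
\[
\Phi_{W}(s_{t}y+v_{t})\leq \hat{\Psi}(y), \qquad \Phi_{W}(s(y)(y+tz)+v(y))\leq \hat{\Psi}(y+tz).
\]
Combining these with the trivial identities $\hat{\Psi}(y+tz)=\Phi_{W}(s_{t}(y+tz)+v_{t})$ and $\hat{\Psi}(y)=\Phi_{W}(s(y)y+v(y))$ sandwiches the increment:
\[
\Phi_{W}(s(y)(y+tz)+v(y))-\Phi_{W}(s(y)y+v(y))\leq \hat{\Psi}(y+tz)-\hat{\Psi}(y)\leq \Phi_{W}(s_{t}(y+tz)+v_{t})-\Phi_{W}(s_{t}y+v_{t}).
\]
The endpoints on each side differ only by $s(y)\,tz$ and $s_{t}\,tz$ respectively, so the mean value theorem converts the bounds into $\Phi_{W}'(\eta_{t})(s(y)\,tz)$ and $\Phi_{W}'(\xi_{t})(s_{t}\,tz)$ for intermediate points $\eta_{t},\xi_{t}\to m(y)$. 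Dividing by $t$ and letting $t\to 0^{\pm}$ (invoking Lemma \ref{funcaomcontinua} to guarantee $s_{t}\to s(y)$), both bounds collapse to the same limit, giving Gateaux differentiability with
\[
\hat{\Psi}'(y)z=s(y)\,\Phi_{W}'(m(y))z=\frac{\|m(y)^{+}\|}{\|y\|}\Phi_{W}'(m(y))z.
\]

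The right-hand side depends continuously on $y\in E^{+}\setminus\{0\}$, since $m$ is continuous (Lemma \ref{funcaomcontinua}) and $u\mapsto\Phi_{W}'(u)$ is continuous from $H^{1}(\mathbb{R}^{N})$ into its dual. Continuity of the Gateaux derivative in $y$ promotes $\hat{\Psi}$ to a $C^{1}$ functional. The only slightly delicate point is verifying $s_{t}\to s(y)$ as $t\to 0$; this follows because continuity of $m$ gives $s_{t}(y+tz)+v_{t}\to s(y)y+v(y)$ in $H^{1}$, and then applying the continuous spectral projection onto $E^{+}$ and comparing norms forces $s_{t}\|y+tz\|\to s(y)\|y\|$. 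This is essentially bookkeeping, so the heart of the argument is the sandwich estimate above.
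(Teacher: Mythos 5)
Your proof is correct and reproduces essentially the argument the paper defers to Szulkin and Weth \cite{SW}: the two-sided sandwich of the increment $\hat{\Psi}(y+tz)-\hat{\Psi}(y)$ using the maximizing property of $m$ on $\hat{E}(\cdot)$, followed by the mean value theorem and the continuity of $m$ (Lemma \ref{funcaomcontinua}) to pinch both one-sided difference quotients to $s(y)\,\Phi_{W}'(m(y))z$. The concluding step — continuity of the Gateaux derivative upgrading $\hat{\Psi}$ to $C^{1}$ — and the bookkeeping that $s_{t}\to s(y)$ via projection onto $E^{+}$ are handled correctly.
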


\begin{corollary}\label{corolariosequenciasps}
	The following assertions hold:
	\begin{itemize}
		\item[(a)] $\Psi\in C^{1}(S^{+})$, and
		$$
		\Psi'(y)z=||m(y)^{+}||\Phi_{W}'(m(y))z,\text{ for }z\in T_{y}S^{+}.
		$$
		\item[(b)] $(w_{n})$ is a (PS)$_{c}$ sequence for $\Psi$ if and only if $(m(w_{n}))$ is a (PS)$_{c}$ sequence for $\Phi_{W}$.
		\item[(c)] If $\gamma_W=\inf_{\M}\Phi_{W}$ is attained by $u\in\M$, then $\Phi_{W}'(u)=0$.
	\end{itemize}
\end{corollary}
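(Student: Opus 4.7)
The plan is to derive all three parts as essentially immediate consequences of Lemma \ref{funcaopsidiferenciavel} together with the characterization of $\M$. The set $S^+$ is the unit sphere in $E^+$, a smooth Hilbert submanifold of $E^+\setminus\{0\}$ with tangent space $T_y S^+=\{z\in E^+:\langle y,z\rangle=0\}$, so part $(a)$ follows by restricting $\hat{\Psi}$ to $S^+$: the $C^1$ regularity is inherited from Lemma \ref{funcaopsidiferenciavel}, and the derivative identity drops the factor $\|y\|^{-1}$ since $\|y\|=1$.

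For part $(b)$, the main preliminary observation is that $\Phi_W'(m(y))$ annihilates the subspace $E^-\oplus\R y$. The condition $m(y)\in\M$ gives directly the vanishing on $E^-$. Writing $m(y)=s_y y+v_y$ with $s_y>0$ and $v_y\in E^-$, the identity $\Phi_W'(m(y))m(y)=0$ combined with the vanishing on $E^-$ yields $\Phi_W'(m(y))y=0$. Decomposing an arbitrary $w\in H^1(\R^N)$ as $w=\alpha y+z_0+v$ with $z_0\in T_y S^+$ and $v\in E^-$, one obtains
\[
\|\Phi_W'(m(y))\|_{*}=\sup_{\substack{z_0\in T_yS^+\\ \|z_0\|\leq1}}|\Phi_W'(m(y))z_0|=\frac{\|\Psi'(y)\|_{*}}{\|m(y)^+\|},
\]
where the second equality uses $(a)$. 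Given a $(PS)_c$ sequence $(w_n)$ for $\Psi$, the identity $\Psi(w_n)=\Phi_W(m(w_n))$ transfers the energy level. Lemma \ref{cpositivo} yields the lower bound $\|m(w_n)^+\|\geq\sqrt{2\gamma_W}>0$, while Corollary \ref{corolariocoercividadeemM} (coercivity of $\Phi_W$ on $\M$) together with the boundedness of $\Phi_W(m(w_n))$ bounds $\|m(w_n)^+\|$ above. These two-sided bounds convert $\|\Psi'(w_n)\|_{*}\to0$ into $\|\Phi_W'(m(w_n))\|_{*}\to0$ and vice versa.

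For part $(c)$, if $u\in\M$ attains $\gamma_W$, set $y_0:=u^+/\|u^+\|\in S^+$. By Lemma \ref{funcaom} we have $m(y_0)=u$ and $m$ maps $S^+$ bijectively onto $\M$, so $y_0$ minimizes $\Psi$ on $S^+$ and hence $\Psi'(y_0)=0$. Part $(a)$ together with $\|m(y_0)^+\|>0$ gives $\Phi_W'(u)z=0$ for every $z\in T_{y_0}S^+$. Combined with $\Phi_W'(u)=0$ on $E^-\oplus\R y_0$ (established in the argument for $(b)$) and the decomposition $H^1(\R^N)=\R y_0\oplus T_{y_0}S^+\oplus E^-$, this gives $\Phi_W'(u)=0$.

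The main technical point is in $(b)$: both directions of the $(PS)_c$ equivalence rely on controlling $\|m(w_n)^+\|$ from above and below, which is where Lemma \ref{cpositivo} (giving $\gamma_W>0$) and Corollary \ref{corolariocoercividadeemM} each play a non-removable role. Parts $(a)$ and $(c)$ are then essentially formal consequences of this structure.
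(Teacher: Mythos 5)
Your proposal is correct and follows the same route as the Szulkin--Weth argument that the paper cites: part (a) by restricting Lemma \ref{funcaopsidiferenciavel} to the unit sphere, part (b) via the orthogonal splitting $H^1=T_yS^+\oplus\R y\oplus E^-$ together with the observation that $\Phi_W'(m(y))$ vanishes on $\R y\oplus E^-$, and part (c) by identifying the minimizer of $\Phi_W|_{\M}$ with a critical point of $\Psi$ through the homeomorphism $m$. The two-sided control of $\|m(w_n)^+\|$ via Lemma \ref{cpositivo} and Corollary \ref{corolariocoercividadeemM} is exactly the right ingredient for transferring the Palais--Smale condition in both directions.
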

\begin{proposition}\label{pssequence}
	There exists a (PS)$_{\gamma_W}$ sequence for $\Phi_{W}$.
\end{proposition}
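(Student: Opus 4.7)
The plan is to transfer the problem from the strongly indefinite setting on $H^{1}(\R^{N})$ to a constrained minimization on the $C^{1}$-Hilbert manifold $S^{+}$, where Ekeland's variational principle applies directly.

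First, I would verify that $\inf_{S^{+}}\Psi=\gamma_W$. On the one hand, for every $y\in S^{+}$ the element $m(y)$ lies in $\M$, so $\Psi(y)=\Phi_{W}(m(y))\geq \gamma_W$. On the other hand, every $u\in\M$ satisfies $u^{+}\neq 0$ by Lemma \ref{cpositivo}, and setting $y=u^{+}/\|u^{+}\|\in S^{+}$ one has $u\in\M\cap\hat{E}(y)$; by the uniqueness statement in Lemma \ref{funcaom}, $m(y)=u$, hence $\Psi(y)=\Phi_{W}(u)$. Taking the infimum over $\M$ gives the reverse inequality, so $\inf_{S^{+}}\Psi=\gamma_W>0$.

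Next, I would apply Ekeland's variational principle on the complete $C^{1}$-Hilbert manifold $S^{+}$. Since $S^{+}$ is the unit sphere of the closed subspace $E^{+}$ (with the equivalent inner product $\langle\,\cdot\,,\,\cdot\,\rangle$) and $\Psi\in C^{1}(S^{+},\R)$ by Corollary \ref{corolariosequenciasps}(a), standard Ekeland on $C^{1}$-manifolds (e.g.\ in the form used in \cite{SW}) yields a sequence $(w_{n})\subset S^{+}$ such that
\begin{equation*}
\Psi(w_{n})\to \gamma_W \quad\text{and}\quad \|\Psi'(w_{n})\|_{T_{w_{n}}S^{+}}\to 0.
\end{equation*}

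Finally, I would invoke Corollary \ref{corolariosequenciasps}(b): the fact that $(w_{n})$ is a $(PS)_{\gamma_W}$ sequence for $\Psi$ on $S^{+}$ is equivalent to $(m(w_{n}))$ being a $(PS)_{\gamma_W}$ sequence for $\Phi_{W}$ on $H^{1}(\R^{N})$. Taking $u_{n}:=m(w_{n})$ produces the required sequence and completes the proof.

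No serious obstacle is expected here, since all of the heavy lifting has already been done: Lemmas \ref{funcaom}--\ref{funcaomcontinua} and Corollary \ref{corolariosequenciasps} collectively establish the homeomorphism $m:S^{+}\to\M$, its $C^{1}$ regularity when composed with $\Phi_{W}$, and the exact correspondence between $(PS)$ sequences on the two sides, which is precisely what reduces this statement to a routine application of Ekeland's principle to a $C^{1}$ functional bounded below on a complete manifold.
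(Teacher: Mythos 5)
Your proof is correct and follows precisely the route the paper intends: the paper states that Proposition \ref{pssequence} "follows as in \cite{SW}," and the Szulkin--Weth argument is exactly the one you give, namely identifying $\inf_{S^{+}}\Psi=\gamma_W$ via the bijection $m$, applying Ekeland's variational principle on the complete $C^{1}$-manifold $S^{+}$, and transferring the resulting $(PS)_{\gamma_W}$ sequence back to $\Phi_W$ via Corollary \ref{corolariosequenciasps}(b).
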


Our next lemma will be used to prove the existence of ground state solution for the periodic case. 

\begin{lemma}\label{finalperiodico}
Let $(u_{n})$ be a (PS)$_{c}$ sequence for functional $\Phi$ given in (\ref{PHI}) with $c\neq0$. Then, there are $r,\epsilon>0$ and $(y_{n})$ in $\Z^{N}$ satisfying
	\begin{equation}\label{negacaohipoteselions}
	\limsup_{n\in\N}\int_{B_{r}(y_{n})}|u_{n}|^{2^{*}}dx\geq\epsilon.
	\end{equation}
	In addition, if $c\in (-\infty,S^{N/2}|k_0|_{\infty}^{\frac{2-N}{2}}/N)\setminus\{0\}$, the sequence $v_{n}=u_{n}(\cdot-y_{n})$ is also a $(PS)_c$ sequence for $\Phi$, and for some subsequence, $v_{n}\rightharpoonup v$ in $H^1(\R^N)$ with $v\neq0$.
\end{lemma}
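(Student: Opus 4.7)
My plan is to prove the two claims in order, using the boundedness of PS sequences established earlier, a critical‐exponent version of Lions' vanishing lemma for the first claim, and Lions' second concentration–compactness principle combined with the $\mathbb{Z}^N$-invariance of $\Phi$ for the second.

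First, by Corollary \ref{LIM}, the sequence $(u_n)$ is bounded in $H^1(\R^N)$. I argue the non-vanishing claim by contradiction: if for every $r>0$ and every $(y_n)\subset\Z^N$ one has $\limsup_n\int_{B_r(y_n)}|u_n|^{2^*}dx=0$, then since every point of $\R^N$ lies within distance $\sqrt{N}/2$ of the lattice, $\sup_{y\in\R^N}\int_{B_r(y)}|u_n|^{2^*}dx\to 0$ for each $r>0$. Covering $\R^N$ by balls of radius $r$ with bounded overlap and applying the local Sobolev inequality on each ball yields
$$
\int|u_n|^{2^*}dx\le C\|u_n\|_{H^1}^{2}\Bigl(\sup_{y}\int_{B_r(y)}|u_n|^{2^*}dx\Bigr)^{(2^*-2)/2^*}\longrightarrow 0,
$$
and a standard Lions interpolation (or H\"older) argument gives $\int|u_n|^{q+1}dx\to 0$. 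The identity
$$
c+o(1)=\Phi(u_n)-\tfrac12\Phi'(u_n)u_n=\tfrac{q-1}{2(q+1)}\int h_0|u_n|^{q+1}dx+\tfrac{1}{N}\int k_0|u_n|^{2^*}dx
$$
then forces $c=0$, contradicting $c\neq 0$.

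For the second claim, set $v_n:=u_n(\,\cdot-y_n)$. Since $V$, $h_0$, $k_0$ are $\Z^N$-periodic and the spectral decomposition $H^1(\R^N)=E^+\oplus E^-$ is invariant under integer translations, $\Phi(v_n)=\Phi(u_n)\to c$ and $\Phi'(v_n)\to 0$ in $(H^1(\R^N))'$, so $(v_n)$ is a $(PS)_c$-sequence for $\Phi$. It is bounded, hence up to a subsequence $v_n\rightharpoonup v$ in $H^1(\R^N)$, and by Corollary \ref{LIM} the limit $v$ is a weak solution of the periodic problem (\ref{problemacasoperiodico}). Suppose, toward a contradiction, that $v=0$. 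By construction,
$$
\int_{B_r(0)}|v_n|^{2^*}dx=\int_{B_r(y_n)}|u_n|^{2^*}dx\ge \tfrac{\epsilon}{2}
$$
along a subsequence. Apply Lions' second concentration--compactness lemma: there exist at most countably many atoms $\{x_j\}$ and positive numbers $\mu_j,\nu_j$ with
$$
|\nabla v_n|^2\rightharpoonup\mu\ge\sum_{j}\mu_j\delta_{x_j},\qquad |v_n|^{2^*}\rightharpoonup\nu=\sum_j\nu_j\delta_{x_j},\qquad \mu_j\ge S\,\nu_j^{2/2^*},
$$
and the mass bound above forces some $\nu_{j_0}>0$ with $x_{j_0}\in\overline{B_r(0)}$. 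Testing $\Phi'(v_n)\to 0$ against $\phi_\delta v_n$ for a cutoff $\phi_\delta\in C_c^\infty(B_\delta(x_{j_0}))$ with $\phi_\delta(x_{j_0})=1$, using that $v_n\to 0$ in $L^2_{loc}$ and $L^{q+1}_{loc}$ (so the $V$-term and the $h_0$-term vanish) and letting $\delta\to 0$, yields $\mu_{j_0}=k_0(x_{j_0})\nu_{j_0}$. Combined with $\mu_{j_0}\ge S\nu_{j_0}^{2/2^*}$, this gives $\nu_{j_0}\ge(S/k_0(x_{j_0}))^{N/2}$, and hence
$$
c=\lim_n\Bigl(\tfrac{q-1}{2(q+1)}\int h_0|v_n|^{q+1}dx+\tfrac{1}{N}\int k_0|v_n|^{2^*}dx\Bigr)\ge\tfrac{1}{N}k_0(x_{j_0})\nu_{j_0}\ge\frac{S^{N/2}}{N|k_0|_\infty^{(N-2)/2}},
$$
contradicting the standing hypothesis on $c$. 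Therefore $v\neq 0$.

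The main obstacles are two technical points: first, the critical $L^{2^*}$-version of Lions' vanishing lemma, which requires the covering/Sobolev estimate above rather than the usual $L^2$-vanishing; and second, the atom analysis, where one must carefully justify that the lower-order terms (the $V$-contribution and the subcritical $h_0$-term) do not charge the atom---this uses compactness of the embedding $H^1_{loc}\hookrightarrow L^p_{loc}$ for $p\in[2,2^*)$. The remaining ingredients---boundedness of $(u_n)$, translation invariance of $\Phi$ along $\Z^N$, and identification of the weak limit as a solution---follow from Corollary \ref{LIM} and the periodicity of $V$, $h_0$, $k_0$.
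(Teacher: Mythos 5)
Your proof is correct and follows essentially the same strategy as the paper's: rule out $L^{2^*}$-vanishing (you unpack the covering/Sobolev estimate that the paper simply cites from [RWW, Lemma 2.1]), shift to integer lattice points, and in the second part apply Lions' concentration--compactness principle II together with the energy threshold $S^{N/2}/(N|k_0|_\infty^{(N-2)/2})$ to exclude $v=0$. The minor variations -- concluding $c=0$ directly from the identity $\Phi(u_n)-\tfrac12\Phi'(u_n)u_n$ rather than first showing $u_n\to 0$ in $H^1$, and phrasing the concentration contradiction as ``some atom has positive mass'' rather than ``all $\nu_j$ vanish, contradicting non-vanishing'' -- are cosmetic rephrasings of the same argument.
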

\begin{proof}
	By Corollary \ref{LIM}, the sequence $(u_{n})$ is bounded in $H^{1}(\R^N)$. Arguing by contradiction, we suppose that
	$$
	\limsup_{n\to +\infty}\sup_{y\in\R^{N}}\int_{B_{R}(y)}|u_{n}|^{2^{*}}dx=0,
	$$
for some $R>0$. Applying \cite[Lemma 2.1]{RWW}, it follows that $u_{n}\rightarrow0$ in $L^{2^{*}}(\R^{N})$, and so, by interpolation  on the Lebesgue spaces,  $u_{n}\rightarrow 0$ in $L^{p}(\R^{N})$ for all $p\in(2,2^{*}]$. As
$$
\Phi'(u_{n})(u_{n}^{-})=-||u_{n}^{-}||^{2}-\int h_{0}(x)|u_{n}|^{q-1}u_{n}u_{n}^{-}dx-\int k_{0}(x)|u_{n}|^{2^{*}-2}u_{n}u_{n}^{-}dx,
$$
we deduce that $u_{n}^{-} \to 0$ in $H^{1}(\R^{N})$. By a similar argument $u_{n}^{+} \to 0$ in $H^{1}(\R^{N})$. Hence	
$$
u_{n}\rightarrow0 \text{ in }\;\; H^{1}(\R^{N}).
$$
Thereby, by continuity of $\Phi$, $c=\lim\Phi(u_{n})=\Phi(0)=0$, which is absurd. Thus, there are $(z_n) \subset \R^N$ and $\eta>0$ satisfying
$$
\int_{B_{R}(z_{n})}|u_{n}^{+}|^{2^*}dx\geq\eta>0,\ \ \ \forall n\in\N.
$$
Recalling that for each $n \in \mathbb{N}$ there is $y_n \in \Z^N$  such that 
$$
B_R(z_n) \subset B_{R+\sqrt{N}}(y_n), 
$$
we have  
$$
\int_{B_{R+\sqrt{N}}(y_{n})}|u_{n}^{+}|^{2^*}dx\geq\eta>0,\ \ \ \forall n\in\N,
$$
finishing the proof of (\ref{negacaohipoteselions}).  
	
Now, assume  $c\in (-\infty,S^{N/2}|k_0|_{\infty}^{\frac{2-N}{2}}/N)\setminus\{0\}$ and set $v_{n}:=u_{n}(\cdot -y_{n})$. By a simple computation, we see that $(v_{n})$ is also a $(PS)_{c}$ sequence for $\Phi$ with 
\begin{equation}\label{lions2*}
\limsup_{n\to +\infty}\int_{B_{r}(0)}|v_{n}^+|^{2^{*}}\,dx \geq\epsilon.
\end{equation}
By Corollary \ref{corolariocoercividadeemM}, $(v_{n})$ is bounded, and so, for some subsequence ( sill denoted by $(v_n)$ ), $v_{n}\rightharpoonup v$ in $H^{1}(\R^N)$ for some $v \in H^{1}(\R^N)$. Suppose by contradiction  $v=0$ and assume that 
	\begin{equation}\label{Lions}
	|\nabla v_{n}|^{2}\rightharpoonup\mu\hspace{1cm}\text{ and }\hspace{1cm}|v_{n}|^{2^{*}}dx\rightharpoonup\nu \text{ in }\M^{+}(\R^{N}).
	\end{equation}
By Concentration-Compactness Principle II due to Lions \cite{lionsII}, there exist a countable set $\mathcal{J}$, $(x_{j})_{j\in \mathcal{J}} \subset \R^{N}$ and $(\mu_{j})_{j\in \mathcal{J}},(\nu_{j})_{j\in \mathcal{J}} \subset [0,+\infty)$ such that
\begin{equation}\label{pcc}
\nu=\sum_{j\in \mathcal{J}}\nu_{j}\delta_{x_{j}} \quad \mu \;\; \geq\sum_{j\in \mathcal{J}}\mu_{j}\delta_{x_{j}}\;\; \text{ with } \;\; \mu_{j} \geq S\nu_{j}^{\frac{2}{2^{*}}}.
\end{equation}
Now, our goal is showing that $\nu_{j}=0$ for all $j\in \mathcal{J}$. First of all, note that 
\begin{equation}\label{desigualdadec}
c=\lim_{n \to +\infty}\left[\Phi(v_{n})-\frac{1}{2}\Phi'(v_{n})v_{n}\right] \geq \frac{1}{N}\sum_{j\in \mathcal{J}}k_{0}(x_{j})\nu_{j}.
\end{equation}

On the other hand, setting $\psi_{\epsilon}(x):=\psi((x-x_{j})/{\epsilon}),\forall\ x\in\R^{N},\forall\ \epsilon>0$, where 	$\psi\in C^{\infty}_{c}(\R^{N})$ is such that $\psi\equiv1$ in $B_{1}(0)$, $\psi\equiv0$ in $\R^{N}\setminus B_{2}(0)$ and $|\nabla\psi|\leq2$, with $0\leq \psi \leq1$, we have that $\psi_{\epsilon}v_{n}\in H^{1}(\R^{N})$ and $(\psi_{\epsilon}v_{n})$ is bounded in $H^{1}(\R^{N})$. So
$$
\Phi'(v_{n})(\psi_{\epsilon}v_{n})\rightarrow0
$$
or equivalently
$$
\int\nabla v_{n}\nabla(\psi_{\epsilon}v_{n})\,dx+\int V(x)\psi_{\epsilon}v_{n}^{2}\,dx-\int h_{0}(x)\psi_{\epsilon}|v_{n}|^{q+1}dx-\int k_{0}(x)|v_{n}|^{2^{*}}\psi_{\epsilon}dx\to 0.
$$
By using the definition of $\nu$ and $\mu$ together with the last limit, we derive 
$$
\int\nabla v(\nabla\psi_{\epsilon})v\,dx+\int V(x)\psi_{\epsilon}v^{2}\,dx-\int h_{0}(x)\psi_{\epsilon}|v|^{q+1}dx+\int\psi_{\epsilon}d\mu-\int k_{0}\psi_{\epsilon}d\nu=0.
$$
Now, taking the limit $\epsilon\rightarrow0$,  we find
$$
\mu(x_{j})=k_{0}(x_{j})\nu_{j}.
$$
By (\ref{pcc}), $\mu_{j}\leq\mu(x_{j})$. Then,   
$$
S\nu_{j}^{2/(2^{*})}=\mu_{j}\leq\mu(x_{j})=k_{0}(x_{j})\nu_{j}.
$$ 
If $\nu_{j}\neq0$, the last inequality gives
\begin{equation} \label{A1}
\nu_{j}\geq\frac{S^{N/2}}{|k_0|_{\infty}^{\frac{N-2}{2}}}.
\end{equation}
Thereby, by (\ref{desigualdadec}) and (\ref{A1}), if there exists $j\in \mathcal{J}$ such that $\nu_{j}\neq0$, we would have   
$$
c\geq\frac{S^{N/2}}{N|k_0|_{\infty}^{\frac{N-2}{2}}}
$$ 
which is absurd. Hence $\nu_{j}=0$ for all $j\in \mathcal{J}$, so $\nu\equiv0$, and by (\ref{Lions}), $|v_{n}|^{2^{*}}\rightharpoonup0$ in $\M^{+}(\R^{N})$. Consequently $v_{n}\rightarrow0$ in $L^{2^{*}}_{loc}(\R^{N})$ which contradicts  (\ref{lions2*}), showing that $v\neq0$.
\end{proof}

\subsection{Proof of Theorem \ref{T1}: The case $N \geq 3$. }

The proof will be divided into two cases, more precisely, the Periodic Case and  the  Asymptotically Periodic Case. 

\vspace{0.5 cm}

\noindent {\bf 1-  The Periodic Case:}

\vspace{0.5 cm}

\begin{proof}
	From Proposition \ref{pssequence}, there exists a   $(PS)_{\gamma}$ sequence $(u_{n})$ for $\Phi$, where $\gamma$ was given in (\ref{gamma}). By  Lemma \ref{finalperiodico}, passing to a subsequence if necessary, $u_{n}\rightharpoonup u\neq0$ and $u\in H^{1}(\R^{N})$ is a solution of problem (\ref{problemacasoperiodico}), and so,  $\Phi(u)\geq \gamma$. On the other hand  
$$
\gamma=\lim_{n\to +\infty}\left[\Phi(u_{n})-\frac{1}{2}\Phi'(u_{n})(u_{n})\right]=\liminf_{n\to +\infty}\left[\left(\frac{1}{2}-\frac{1}{q+1}\right)\int h(x)|u_{n}|^{q+1}dx\right.
$$
$$\left.+\left(\frac{1}{2}-\frac{1}{2^{*}}\right)\int k(x)|u_{n}|^{2^{*}}\,dx \right]\geq\left[\left(\frac{1}{2}-\frac{1}{q+1}\right)\int h(x)|u|^{q+1}dx\right.+
$$
$$
\left.+\left(\frac{1}{2}-\frac{1}{2^{*}}\right)\int k(x)|u|^{2^{*}}\,dx \right]=\Phi(u)-\frac{1}{2}\Phi'(u)u=\Phi(u).
$$
	From this, $u\in H^{1}(\R^{N})$ is a ground state solution for the problem $(\ref{problemacasoperiodico})$.
\end{proof}

\vspace{0.5 cm}

\noindent {\bf 2- Asymptotically Periodic Case }

\vspace{0.5 cm}

\begin{proof} From definition of $\Phi_{W}$ and $\Phi$,  we have the inequality 
$$
\gamma_W \leq \gamma.
$$
Next, our analysis will be divide into two cases, more precisely, $\gamma_W=\gamma$ and $\gamma_W < \gamma$.  

Assume firstly $\gamma_W =\gamma$. Let $u\in H^{1}(\R^{N})$ be a ground state solution of (\ref{problemacasoperiodico}) for the periodic case and  $v\in\widehat{E}(u)$ such that  
$$
\Phi_{W}(v)=\displaystyle\sup_{\widehat{E}(u)}\Phi_{W}.
$$  
Then,
$$
\gamma_W \leq\Phi_{W}(v)\leq\Phi(v)\leq\Phi(u)=\gamma=\gamma_W,
$$
implying that $\Phi_{W}(v)=\gamma_W $ with $v\in\M$. By Corollary \ref{corolariosequenciasps}, part (c), we deduce that $v$ is a ground state solution of (\ref{problema}).

Now, assume $\gamma_W <\gamma$ and let $(u_{n})$ be a $(PS)_{\gamma_{W}}$ sequence for $\Phi_{W}$ given by Proposition \ref{pssequence}. By Lemma \ref{limitacaosequenciaps}, $(u_{n})$ is a bounded sequence, then for some subsequence (still denoted by $(u_n)$)  $u_{n}\rightharpoonup u$ in $H^{1}(\R^N)$. We claim that $u \not=0$. Indeed, if $u=0$ it is easy to see  that 
$$
\int W(x)u_{n}^{2}dx \to 0 \;\; \mbox{and} \;\; \sup_{\|\psi\|\leq 1}\left|\int W(x) u_{n}\psi dx \right| \to 0.
$$ 
In addiction, by $(C_1)$, we also have 
$$
\int h^{*}(x)|u_{n}|^{q+1} dx \to 0 \quad \mbox{and} \quad \sup_{\|\psi\|\leq 1}\left|\int h^{*}(x)|u_{n}|^{q-1}u\psi dx \right| \to 0.
$$ 
Arguing as in Lemma \ref{finalperiodico}, we derive that $u_{n}\rightarrow 0$ in $L^{2^{*}}_{loc}(\R^{N})$, and so, 
$$
\int k^{*}(x)|u_{n}|^{2^{*}}dx \to 0 \quad \mbox{and} \quad  \sup_{\|\psi\|\leq 1}\left|\int k^{*}(x)|u_{n}|^{2^{*}-2}u_n\psi dx\right| \to 0. 
$$
Hence 
$$
\Phi_W(u_{n}) \to \gamma_{W} \;\; \mbox{and} \;\;  ||\Phi_W'(u_{n})|| \to 0,
$$
that is, $(u_{n})$ is a $(PS)_{\gamma_W}$ sequence for $\Phi_W$. By Proposition \ref{limitacaoc},  
$$
\gamma_W <\frac{S^{N/2}}{N|k_{0}|_{\infty}^{\frac{N-2}{2}}}.
$$ 
Then, Proposition \ref{finalperiodico} guarantees the existence of  $(y_{n}) \subset \Z^{N}$ such that $v_{n}:=u_{n}(\cdot-y_{n})\rightharpoonup v\neq0$ in $H^{1}(\R^N)$ and $\Phi'(v)=0$. Consequently 
$$
\begin{array}{ll}
\gamma_W & = \lim_{n\to +\infty}\Phi_{W}(u_{n})=\lim_{n \to +\infty}\Phi(u_{n}) \\
  & = \lim_{n\to +\infty}\Phi(v_{n})=\lim_{n\to +\infty}\left[\Phi(v_{n})-\frac{1}{2}\Phi'(v_{n})v_{n}\right] \\
 & \geq \Phi(v)-\frac{1}{2}\Phi'(v)v=\Phi(v)\geq \gamma
\end{array}
$$
which is absurd, proving that $u\neq0$. Now, we repeat the same argument explored in the periodic case to conclude that $u$ is a ground state solution of (\ref{problema}). \end{proof}

\section{The case $N=2$ }

In this section we are going to show the existence of ground state solution for the following indefinite problem
\begin{equation}\label{probleman2}
\left\{\begin{array}{l}
-\Delta u+(V(x)-W(x))u=f(x,u),\quad \mbox{in} \,\, \R^{2}, \\
u\in H^{1}(\R^{2}),
\end{array}\right.
\end{equation}
by assuming $(V_1),(V_2), (W_1),(W_2)$ and $(f_1)-(f_6)$. Since we will work with exponential critical growth, in the next subsection we recall some facts involving this type of growth.

\subsection{Results involving exponential critical growth}

\hspace{0.5 cm} The exponential critical growth on $f$ is motivated by the following estimates proved by Trudinger \cite{trudinger} and Moser \cite{moser}.

\begin{lemma} \noindent {\bf (Trudinger-Moser inequality for bounded domains)}\label{lema 1.1}
	Let $\Omega \subset \R^2$ be a bounded domain. Given any  $ u \in H_0^{1}(\Omega)$, we have
	$$
	\int_{\Omega}
	e^{\alpha\left|u\right|^{2}}dx
	< \infty, \,\,\,\, \mbox{ for every }\,\,\alpha >0.
	$$
	Moreover, there exists a positive constant $C=C(|\Omega|)$ such that
	\[
	\sup_{||u||\leq 1} \int_{\Omega} e^{\alpha|u|^{2}} dx \leq C , \,\,\,\,\,\,\, \mbox{for all } \, \alpha  \leq 4 \pi,
	\]
\end{lemma}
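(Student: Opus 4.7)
\noindent The plan is to follow the classical Moser--Trudinger approach, reducing the inequality to a one-dimensional estimate via symmetric decreasing rearrangement combined with an exponential change of variables. First I would apply Schwarz symmetrization: given $u\in H_0^{1}(\Omega)$ extended by $0$ outside $\Omega$, let $u^{*}$ denote its symmetric decreasing rearrangement on the ball $B_{R}(0)$ with $|B_{R}(0)|=|\Omega|$. The P\'olya--Szeg\H{o} inequality gives $\|\nabla u^{*}\|_{2}\leq \|\nabla u\|_{2}$, while equimeasurability yields $\int_{\Omega}e^{\alpha u^{2}}\,dx=\int_{B_{R}(0)}e^{\alpha (u^{*})^{2}}\,dx$. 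Hence it suffices to prove the uniform bound for radially symmetric, nonnegative, nonincreasing functions on $B_{R}(0)$.

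\noindent Next I would perform Moser's change of variables. For such a radial $u\geq 0$ on $B_{R}(0)$, set $t:=2\log(R/|x|)\in[0,+\infty)$ and $\varphi(t):=(4\pi)^{1/2}u(Re^{-t/2})$. A direct computation yields $\varphi(0)=0$, $\int_{B_{R}(0)}|\nabla u|^{2}\,dx=\int_{0}^{+\infty}(\varphi'(t))^{2}\,dt$, and $\int_{B_{R}(0)}e^{\alpha u^{2}}\,dx=\pi R^{2}\int_{0}^{+\infty}e^{\alpha\varphi(t)^{2}/(4\pi)-t}\,dt$. For $\alpha\leq 4\pi$, the uniform bound thus reduces to producing a universal constant $C_{0}$ with $\int_{0}^{+\infty}e^{\varphi(t)^{2}-t}\,dt\leq C_{0}$ whenever $\varphi(0)=0$ and $\int_{0}^{+\infty}(\varphi'(t))^{2}\,dt\leq 1$. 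By Cauchy--Schwarz one immediately gets $\varphi(t)^{2}\leq t$ pointwise, so the integrand is at most $1$, but this does not bound the integral itself; the sharp estimate is the main obstacle and requires a more delicate partition of $[0,+\infty)$ according to the size of $\varphi(t)^{2}-t$, using the constraint $\int_{0}^{+\infty}(\varphi'(t))^{2}\,dt\leq 1$ to control the measure of the set where $\varphi$ is close to its Cauchy--Schwarz extremal envelope, and exploiting the decay of $e^{-t}$ on the complement.

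\noindent Finally, the pointwise assertion $\int_{\Omega}e^{\alpha u^{2}}\,dx<+\infty$ for every $\alpha>0$ and every $u\in H_{0}^{1}(\Omega)$ would follow from the uniform bound by density and splitting: approximate $u$ by $u_{n}\in C_{0}^{\infty}(\Omega)$ in the $H_{0}^{1}$-norm, choose $n$ large enough that $2\alpha\|u-u_{n}\|_{H_{0}^{1}}^{2}<4\pi$, apply the elementary inequality $\alpha u^{2}\leq 2\alpha(u-u_{n})^{2}+2\alpha u_{n}^{2}$, and invoke the uniform bound on the normalized function $(u-u_{n})/\|u-u_{n}\|_{H_{0}^{1}}$ together with the boundedness of the smooth truncation $u_{n}$ (so that $e^{2\alpha u_{n}^{2}}$ is bounded on $\Omega$) to close the estimate.
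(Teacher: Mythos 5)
The paper does not prove this lemma; it is stated as the classical Trudinger--Moser inequality and the references \cite{trudinger} and \cite{moser} are cited for it, so there is no proof in the paper to compare against. Your outline correctly reproduces the structure of Moser's original argument: reduction to radial decreasing functions by Schwarz symmetrization and the P\'olya--Szeg\H{o} inequality, followed by the logarithmic change of variables $t=2\log(R/|x|)$ and $\varphi(t)=(4\pi)^{1/2}u(Re^{-t/2})$, which transforms the Dirichlet energy into $\int_0^\infty(\varphi')^2\,dt$ and the exponential integral into $\pi R^2\int_0^\infty e^{\alpha\varphi^2/(4\pi)-t}\,dt$; and your deduction of the first assertion from the second by writing $u=(u-u_n)+u_n$ with $u_n\in C_0^\infty$, choosing $n$ so that $2\alpha\|\nabla(u-u_n)\|_2^2<4\pi$, and using $e^{\alpha u^2}\leq e^{2\alpha(u-u_n)^2}e^{2\alpha u_n^2}$ is correct.

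However, there is a genuine gap at the crux of the argument. You explicitly leave unproven the one-dimensional statement that there is a universal $C_0$ with $\int_0^\infty e^{\varphi(t)^2-t}\,dt\leq C_0$ whenever $\varphi(0)=0$ and $\int_0^\infty(\varphi')^2\,dt\leq1$, describing only vaguely that it ``requires a more delicate partition of $[0,+\infty)$.'' This is precisely Moser's Lemma~1, and it is the entire content that upgrades Trudinger's non-sharp exponent to the sharp threshold $\alpha=4\pi$; as you yourself note, the Cauchy--Schwarz bound $\varphi(t)^2\leq t$ only makes the integrand $\leq1$ and gives nothing. The actual argument requires, for example, fixing $T$ with $\varphi(T)^2\geq T-c$ for a suitable constant and showing such points are confined to a bounded set by exploiting $\varphi(t)-\varphi(T)\leq\sqrt{t-T}\bigl(1-\int_0^T(\varphi')^2\,ds\bigr)^{1/2}$, then controlling the integral over that set and over its complement separately. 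Without this step the proposal proves only finiteness for each fixed $\alpha<4\pi$ individually, not the uniform bound up to and including $\alpha=4\pi$, so the second (and harder) assertion of the lemma remains unestablished.
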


The next result is a version of the Trudinger-Moser inequality for whole $\mathbb{R}^{2}$, and its proof can be found in Cao \cite{Cao} ( see also Ruf \cite{Ruf} ).

\begin{lemma} \noindent {\bf (Trudinger-Moser inequality for unbounded domains)}\label{lema 1.2}
	For all $ u \in H^{1}(\mathbb{R}^{2})$, we have
	$$
	\int 
	\left(e^{\alpha\left|u\right|^{2}}-1 \right)dx
	< \infty,\,\,\,\, \mbox{ for every }\,\,\alpha >0.
	$$
	Moreover, if $\left| \nabla
	u\right|^{2}_{2}\leq 1,\,\left|u\right|_{2} \leq M < \infty $ and
	$\alpha < 4 \pi$, then there
	exists a positive constant $C=C(M,\alpha)$ such that
	$$
	\int \left(e^{\alpha\left|u\right|^{2}}-1\right)dx \leq C,
	$$
	\end{lemma}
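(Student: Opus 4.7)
The plan is to bootstrap from the bounded-domain Trudinger-Moser inequality of Lemma \ref{lema 1.1} via truncation for the pointwise finiteness and via Schwarz symmetrization plus an inner/outer decomposition for the uniform bound, following the strategy of Cao \cite{Cao} and Ruf \cite{Ruf}.

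For the first assertion, fix $u\in H^1(\R^2)$ and $\alpha>0$, and for $L>0$ introduce the truncated tail $v_L:=(|u|-L)_+\in H^1(\R^2)$. The set $A_L:=\{|u|>L\}$ has finite measure (by Chebyshev, since $|u|_2<\infty$), $v_L$ vanishes outside $A_L$, and $|\nabla v_L|_2^2=\int_{A_L}|\nabla u|^2\, dx\to 0$ as $L\to\infty$ by dominated convergence. On $A_L^c$, the elementary estimate $e^{\alpha u^2}-1\leq \alpha u^2 e^{\alpha L^2}$ gives integrability against $u^2\in L^1$. On $A_L$, the splitting
$$
u^2\leq (1+\varepsilon)v_L^2+(1+1/\varepsilon)L^2
$$
yields
$$
\int_{A_L}(e^{\alpha u^2}-1)\, dx\leq e^{\alpha(1+1/\varepsilon)L^2}\int_{A_L}e^{\alpha(1+\varepsilon)v_L^2}\, dx.
$$
Choosing $L$ so large that $\alpha(1+\varepsilon)|\nabla v_L|_2^2\leq 4\pi$, the remaining integral is controlled by applying Lemma \ref{lema 1.1} to $v_L$ on $A_L\cap B_R$ and passing $R\to\infty$ by monotone convergence.

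For the uniform bound, I would first pass to the Schwarz symmetrization $u^*$ of $|u|$: equimeasurability preserves the integrand and the P\'olya-Szeg\"o inequality gives $|\nabla u^*|_2\leq 1$, $|u^*|_2\leq M$, so it suffices to treat $u$ radial decreasing. Monotonicity then provides the decay $u(r)^2\leq M^2/(\pi r^2)$, so choosing $R=R(M,\alpha)$ with $\alpha u(R)^2\leq 1$ bounds the exterior contribution via
$$
\int_{\R^2\setminus B_R}(e^{\alpha u^2}-1)\, dx\leq (e-1)\alpha\int_{\R^2\setminus B_R}u^2\, dx\leq (e-1)\alpha M^2.
$$
For the interior, set $\tilde u:=(u-u(R))\chi_{B_R}\in H^1_0(B_R)$, which satisfies $|\nabla \tilde u|_2\leq 1$. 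The estimate $u^2\leq (1+\delta)\tilde u^2+(1+1/\delta)u(R)^2$ combined with the classical Moser inequality applied with exponent $\alpha(1+\delta)\leq 4\pi$ (possible since $\alpha<4\pi$) produces the uniform bound $\int_{B_R}(e^{\alpha u^2}-1)\, dx\leq C(M,\alpha)$.

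The main obstacle is balancing the opposing requirements on $R$: it must be large enough that $u(R)^2$ is small (so both the tail integral and the multiplicative factor $e^{\alpha(1+1/\delta)u(R)^2}$ are tame), yet the Moser constant on $B_R$ grows linearly with $|B_R|$. Because $R$ and $\delta$ depend only on $M$ and $\alpha$ and not on $u$, the constants do assemble into a single $C(M,\alpha)$, but the uniformity in $u$ must be carefully tracked through every step. A secondary technicality is that the rearrangement $u^*$ need not have the regularity required to apply Lemma \ref{lema 1.1} directly and must be approached by smooth approximations.
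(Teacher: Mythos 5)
The paper gives no proof of this lemma --- it simply cites Cao \cite{Cao} (see also Ruf \cite{Ruf}) --- so there is no in-text argument to compare against. Your reconstruction follows Cao's standard strategy, and the second assertion is argued correctly: symmetrize, peel off $\R^2\setminus B_R$ via $e^t-1\leq(e-1)t$ for $t\in[0,1]$, shift by $u^*(R)$ to land in $H^1_0(B_R)$, and balance $\delta=4\pi/\alpha-1$ against $R$ of order $\sqrt{\alpha/\pi}\,M$; since both depend only on $(M,\alpha)$, the constants do assemble into a uniform $C(M,\alpha)$. Your closing worry about the regularity of $u^*$ is unfounded: P\'olya--Szeg\"o already gives $u^*\in H^1$, and the shifted function $(u^*-u^*(R))\chi_{B_R}$ lies in $H^1_0(B_R)$ exactly as Lemma \ref{lema 1.1} requires, with no smoothing needed.

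The one genuine gap is in the first assertion, at the step where you propose applying Lemma \ref{lema 1.1} ``to $v_L$ on $A_L\cap B_R$ and passing $R\to\infty$.'' The set $A_L=\{|u|>L\}$ need not be bounded, open, or a domain, and the restriction of $v_L$ to $B_R$ is not in $H^1_0(B_R)$ because $v_L$ need not vanish on $\partial B_R$; monotone convergence cannot repair this, since there is no bounded domain on which Lemma \ref{lema 1.1} legitimately applies to $v_L$ itself. The fix is the same device you already use in the second assertion: Schwarz-symmetrize $v_L$. Since $\{v_L>0\}\subset A_L$ has finite measure, $v_L^*$ is radially nonincreasing, supported in the closed ball $\overline{B_\rho}$ with $\pi\rho^2=|A_L|$, and lies in $H^1_0(B_{\rho+1})$; by equimeasurability $\int\bigl(e^{\alpha(1+\varepsilon)v_L^2}-1\bigr)\,dx=\int_{B_{\rho+1}}\bigl(e^{\alpha(1+\varepsilon)(v_L^*)^2}-1\bigr)\,dx<\infty$ by the first part of Lemma \ref{lema 1.1}. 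Note also that your condition $\alpha(1+\varepsilon)|\nabla v_L|_2^2\leq4\pi$ is unnecessary for this assertion: the finiteness part of Lemma \ref{lema 1.1} holds for every $\alpha>0$ without any norm constraint.
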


The Trudinger-Moser inequalities will be strongly utilized throughout
this section in order to deduce important estimates. The reader can find more recent results involving this inequality in  \cite{CT},  \cite{masmoudi}, \cite{ish}, \cite{MS} and references therein

In the sequel, we state
some technical lemmas found in \cite{AL} and \cite{djairo}, which will be essential to carry out the proof of our
results.

\begin{lemma}\label{lema 1.3} Let  $ \alpha > 0$ and $t \geq 1$. Then, for every each $\beta > t$, there exists a constant $C=C(\beta,t)
	> 0$ such that
	$$
	\left(e^{4\pi \left|s\right|^{2}}-1 \right)^{t} \leq C
	\left(e^{\beta 4 \pi \left|s\right|^{2}}-1\right), \quad \forall s \in \mathbb{R}.
	$$
\end{lemma}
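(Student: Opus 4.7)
The inequality is homogeneous in a well-defined sense only up to handling the behavior at $s=0$ and at infinity, so the natural approach is to consider the quotient
$$
g(s):=\frac{\bigl(e^{4\pi s^{2}}-1\bigr)^{t}}{e^{4\pi\beta s^{2}}-1}
$$
on $\mathbb{R}\setminus\{0\}$ and show it extends to a bounded continuous function on $\mathbb{R}$. The plan is to verify boundedness locally at $0$, at $\pm\infty$, and then invoke continuity on compact intervals to conclude.

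First I would study $g$ as $s\to 0$. Using the expansion $e^{4\pi s^{2}}-1 = 4\pi s^{2}+O(s^{4})$ and $e^{4\pi\beta s^{2}}-1=4\pi\beta s^{2}+O(s^{4})$, one gets
$$
g(s)=\frac{(4\pi s^{2})^{t}\bigl(1+O(s^{2})\bigr)^{t}}{4\pi\beta s^{2}\bigl(1+O(s^{2})\bigr)}=\frac{(4\pi)^{t-1}}{\beta}\,s^{2(t-1)}\bigl(1+o(1)\bigr).
$$
Since $t\geq 1$, the exponent $2(t-1)\geq 0$, so $\lim_{s\to 0}g(s)$ exists and is finite (equal to $0$ if $t>1$ and to $1/\beta$ if $t=1$). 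Consequently $g$ extends continuously to $s=0$.

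Next I would handle $|s|\to\infty$. Writing
$$
g(s)=\frac{\bigl(e^{4\pi s^{2}}-1\bigr)^{t}}{e^{4\pi\beta s^{2}}-1}
=e^{4\pi(t-\beta)s^{2}}\cdot\frac{\bigl(1-e^{-4\pi s^{2}}\bigr)^{t}}{1-e^{-4\pi\beta s^{2}}},
$$
the second factor tends to $1$ as $|s|\to\infty$, while the exponential factor tends to $0$ because $\beta>t$. Hence $\lim_{|s|\to\infty}g(s)=0$.

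Finally, since the (continuously extended) function $g$ has finite limits at $0$ and at $\pm\infty$ and is continuous on $\mathbb{R}$, it is bounded: there exists $C=C(\beta,t)>0$ with $g(s)\leq C$ for every $s\in\mathbb{R}$. Multiplying through by $e^{4\pi\beta s^{2}}-1$ yields the claimed inequality. The only subtle point is the use of the hypothesis $t\geq 1$ to ensure the exponent $2(t-1)$ is nonnegative near the origin; the other bound at infinity is precisely where the strict inequality $\beta>t$ enters. This is the main (and essentially only) obstacle, and it is handled by the explicit asymptotic computations above.
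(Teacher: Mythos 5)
Your proof is correct, and since the paper states Lemma \ref{lema 1.3} without proof (citing the references for it), there is no in-paper argument to compare against; the approach you take, bounding the quotient $g(s)=(e^{4\pi s^{2}}-1)^{t}/(e^{4\pi\beta s^{2}}-1)$ by checking its finite limit at $s=0$ (where $t\geq1$ is used) and its vanishing at $|s|\to\infty$ (where $\beta>t$ is used) and then invoking continuity, is exactly the standard argument behind this estimate in the cited sources.
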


\begin{lemma} \label{LX} Let $(u_n)$ be a sequence such that $u_n(x) \to u(x)$ a.e. in $\R^2$ and $(f(x,u_n)u_n)$ is bounded in $L^{1}(\R^{2})$. Then, $f(x,u_n) \to f(x,u)$ in $L^1(B_R(0))$ for all $R>0$, and so,  
$$
\int f(x,u_n)\phi \, dx  \to \int f(x,u)\phi \, dx, \quad \forall \phi \in C^{\infty}_{0}(\R^2).
$$	
\end{lemma}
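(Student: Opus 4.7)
\bigskip
\noindent\textbf{Proof proposal for Lemma \ref{LX}.}

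The plan is to establish the $L^1$ convergence on $B_R(0)$ via Vitali's convergence theorem. Since $f$ is continuous and $u_n(x)\to u(x)$ a.e., the pointwise convergence $f(x,u_n(x))\to f(x,u(x))$ a.e. on $B_R(0)$ is immediate. The whole task reduces to verifying a uniform equi-integrability estimate for the family $\{f(\cdot,u_n)\}$ on $B_R(0)$, the key leverage being the hypothesis $\|f(\cdot,u_n)u_n\|_{L^1(\R^2)}\leq C$.

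The central decomposition I would use is, for any measurable $A\subset B_R(0)$ and any $M>0$,
$$
\int_A |f(x,u_n)|\,dx \;=\; \int_{A\cap\{|u_n|\leq M\}}|f(x,u_n)|\,dx \;+\; \int_{A\cap\{|u_n|>M\}}|f(x,u_n)|\,dx.
$$
On $\{|u_n|>M\}$ I bound $|f(x,u_n)|\leq |f(x,u_n)u_n|/M$, giving
$$
\int_{A\cap\{|u_n|>M\}}|f(x,u_n)|\,dx \;\leq\; \frac{1}{M}\int |f(x,u_n)u_n|\,dx \;\leq\; \frac{C}{M}.
$$
On $\{|u_n|\leq M\}$, the continuity of $f$ on the compact set $\overline{B_R(0)}\times[-M,M]$ yields a constant $K_{M,R}$ such that the first term is bounded by $K_{M,R}\,|A|$. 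Given $\varepsilon>0$, first fix $M$ with $C/M<\varepsilon/2$, and then $\delta:=\varepsilon/(2K_{M,R})$; for $|A|<\delta$, both pieces are $<\varepsilon/2$, giving equi-integrability uniformly in $n$. By Fatou's lemma applied to $|f(x,u_n)u_n|$, the limit $f(x,u)u$ also lies in $L^1(\R^2)$, so the same split shows $\{f(\cdot,u)\}$ is integrable on $B_R(0)$ as well.

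Vitali's convergence theorem then delivers $f(\cdot,u_n)\to f(\cdot,u)$ in $L^1(B_R(0))$ for every $R>0$. The second conclusion is immediate: for $\phi\in C_0^\infty(\R^2)$ with $\mathrm{supp}(\phi)\subset B_R(0)$,
$$
\Bigl|\int f(x,u_n)\phi\,dx - \int f(x,u)\phi\,dx\Bigr| \;\leq\; \|\phi\|_\infty \int_{B_R(0)}|f(x,u_n)-f(x,u)|\,dx \;\longrightarrow\; 0.
$$

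I do not foresee a serious obstacle; the one point that deserves care is checking that the continuity of $f$ on $\overline{B_R(0)}\times[-M,M]$ genuinely gives a uniform bound $K_{M,R}$. Since the local boundedness of $f$ is a standing hypothesis (in particular implied by $(f_5)$ in dimension two), this is automatic. The only subtlety worth flagging is that the equi-integrability argument does not require any control over the exponential tails of $f$ beyond the a priori bound on $f(x,u_n)u_n$ in $L^1$; it is precisely this $L^1$ bound that tames the critical growth on the set where $|u_n|$ is large.
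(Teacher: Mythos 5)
Your proposal is correct, and it reproduces essentially the standard argument behind this lemma (which the paper does not prove itself, but attributes to \cite{AL} and \cite{djairo}, the latter being de~Figueiredo--Miyagaki--Ruf, Lemma~2.1): pointwise convergence of $f(\cdot,u_n)$ from continuity of $f$, plus uniform integrability on $B_R(0)$ via the two-regime split $\{|u_n|\leq M\}$ (where $f$ is bounded by continuity on a compact set) and $\{|u_n|>M\}$ (where $|f(x,u_n)|\leq |f(x,u_n)u_n|/M$ is controlled by the $L^1$ hypothesis), followed by Vitali's convergence theorem on the finite-measure ball. The one small point you should make explicit is that your equi-integrability estimate with $A=B_R(0)$ also gives $\sup_n\|f(\cdot,u_n)\|_{L^1(B_R(0))}<\infty$, which is part of what Vitali requires on a finite-measure set; with that observation, nothing is missing.
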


\subsection{Technical Lemmas} 

In this subsection we have used the same notations of Section 2, however we will recall some of them for the convenience of the reader.  In what follows,  we denote by $\Phi_{W}:H^{1}(\R^{2})\rightarrow\R$ the energy functional given by 
$$
\Phi_{W}(u):=\frac{1}{2}B(u,u)-\frac{1}{2}\int W(x)|u|^{2}dx-\int F(x,u)dx,
$$
where $B:H^{1}(\R^{2})\times H^{1}(\R^{2})\rightarrow\R$ is the bilinear form
$$
B(u,v):=\int(\nabla u\nabla v+V(x)uv)dx,\ \ \forall\ u,v\in H^{1}(\R^{2}).
$$
It is well known that $\Phi_W \in C^{1}(H^{1}(\R^2), \R)$ with 
$$
\Phi_{W}'(u)v=B(u,v)-\int W(x)uv dx-\int f(x,u)v dx, \quad \forall u,v \in H^{1}(\R^2). 
$$
Therefore critical points of $\Phi_{W}$ are solutions of (\ref{probleman2}). Moreover, we can rewrite the functional $\Phi_W$ of the form
$$
\Phi_W(u)=\frac{1}{2}\|u^+\|^{2}-\frac{1}{2}\|u^-\|^{2}-\frac{1}{2}\int W(x)|u|^{2}\,dx -\int F(x,u)\,dx,
$$

In what follows, we also consider the $C^{1}$-functional $\Phi:H^{1}(\R^{2})\rightarrow\R$
$$
\Phi(u):=\frac{1}{2}B(u,u)-\int F_{0}(x,u)dx
$$
or equivalently
$$
\Phi(u)=\frac{1}{2}\|u^+\|^{2}-\frac{1}{2}\|u^-\|^{2}-\int F_0(x,u)\,dx,
$$
whose the critical points are weak solutions of periodic problem
\begin{equation}\label{problemacasoperiodicon2}
\left\{\begin{array}{l} -\Delta u+V(x)=F_{0}(x,u),\quad \mbox{in} \;\; \R^{2}, \\ u\in H^{1}(\R^{2})\end{array}\right.
\end{equation}

As in Section 2, we will consider the sets
$$
\M:=\{u\in H^{1}(\R^{2})\setminus E^{-}\ ;\ \Phi_{W}'(u)u=0\text{ and }\Phi_{W}'(u)v=0,\forall\ v\in E^{-}\},
$$
$$
E(u):=E^{-}\oplus \R u\ \text{ and }\ \hat{E}(u):=E^{-}\oplus [0,+\infty)u
$$ 
Hence
$$
E(u)=E^{-}\oplus \R u^{+}\ \text{ and }\ \hat{E}(u)=E^{-}\oplus [0,+\infty)u^{+}.
$$ 
Moreover, we fix the real numbers
$$
\gamma_W:=\inf_{\M}\Phi_{W} \quad \mbox{and} \quad \gamma:=\inf_{\M}\Phi.
$$

\begin{lemma}\label{contadomaximon2}
	If $u\in\M$ and $w=su+v$ where $s\geq1$ and $v\in E^{-}$ such that $w\neq0$, then
	$$
	\Phi_{W}(u+w)<\Phi_{W}(u)
	$$
\end{lemma}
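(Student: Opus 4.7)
My plan is to follow the same template as Lemma~\ref{contadomaximo}, using the monotonicity of $t\mapsto f(x,t)/|t|$ encoded in $(f_{3})$ in place of the explicit polynomial calculation available for $N\geq 3$. I would first introduce the shorthand
$$G(x,t):=\tfrac{1}{2}W(x)t^{2}+F(x,t),\qquad g(x,t):=W(x)t+f(x,t),$$
so that $\Phi_{W}(u)=\tfrac{1}{2}B(u,u)-\int G(x,u)\,dx$ and $\Phi_{W}'(u)\psi=B(u,\psi)-\int g(x,u)\psi\,dx$ for every $\psi\in H^{1}(\R^{2})$. All the integrals appearing below are well defined thanks to $(f_{1})$, $(f_{2})$, $(f_{5})$ and the Trudinger--Moser inequality (Lemma~\ref{lema 1.2}).

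The first step is a purely algebraic reduction. Writing $u+w=(1+s)u+v$ and using bilinearity of $B$ together with $B(u,v)=-\langle u^{-},v\rangle$ and $B(v,v)=-\|v\|^{2}$ for $v\in E^{-}$, I would obtain
$$\tfrac{1}{2}[B(u+w,u+w)-B(u,u)]=\tfrac{(1+s)^{2}-1}{2}B(u,u)-(1+s)\langle u^{-},v\rangle-\tfrac{1}{2}\|v\|^{2}.$$
The relations $B(u,u)=\int g(x,u)u\,dx$ and $-\langle u^{-},v\rangle=B(u,v)=\int g(x,u)v\,dx$, which follow from $u\in\M$, then yield, exactly as in Lemma~\ref{contadomaximo},
$$\Phi_{W}(u+w)-\Phi_{W}(u)=-\tfrac{1}{2}\|v\|^{2}+\int\!\Bigl\{g(x,u)\bigl[\bigl(\tfrac{s^{2}}{2}+s\bigr)u+(1+s)v\bigr]+G(x,u)-G(x,u+w)\Bigr\}dx.$$

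The crucial step is to check that the integrand is pointwise non-positive and strictly negative on a set of positive measure. Writing $\tau:=1+s\geq 2$, $t:=u(x)$, $r:=v(x)$, a direct expansion shows that the purely quadratic $W$-contribution collapses to $-\tfrac{1}{2}W(x)r^{2}\leq 0$, while the remaining $f$-contribution
$$J_{F}(t,r):=f(x,t)\bigl[\tfrac{\tau^{2}-1}{2}t+\tau r\bigr]+F(x,t)-F(x,\tau t+r)$$
is exactly the object handled in \cite[Proposition~2.3]{SW}: the strict monotonicity of $t\mapsto f(x,t)/|t|$ granted by $(f_{3})$ forces $J_{F}\leq 0$, with equality only where $(\tau-1)t+r=0$. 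Since $\tau\geq 2$ and $u^{+}\neq 0$, this equality set cannot carry the whole nonzero-$u$ part of $\R^{2}$, and integrating the three contributions gives the strict inequality $\Phi_{W}(u+w)<\Phi_{W}(u)$.

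The main obstacle is the pointwise inequality for $J_{F}$; however, it is a growth-independent algebraic statement whose verification only uses $(f_{3})$, so neither the exponential critical behaviour of $f$ (handled separately by Trudinger--Moser for integrability) nor the loss of periodicity interferes, and the argument transcribes from \cite{SW} once the reduction above has been performed.
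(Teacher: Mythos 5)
Your proposal is correct and follows essentially the same route as the paper: the paper's proof of Lemma~\ref{contadomaximon2} simply defers to Lemma~\ref{contadomaximo}, which performs exactly your algebraic reduction with $G(x,t)=\tfrac{1}{2}W(x)t^{2}+F(x,t)$, $g(x,t)=W(x)t+f(x,t)$, notes that the $W$--part of the integrand collapses to $-\tfrac{1}{2}W(x)v^{2}\leq0$, and then invokes the pointwise inequality of \cite[Proposition~2.3]{SW}, which only uses the monotonicity in $(f_{3})$ together with the integrability supplied by Trudinger--Moser. (As a small refinement, the Szulkin--Weth inequality is in fact strict whenever $u(x)\neq0$ since $\tau\geq2\neq1$, so the strict sign already follows from $u\neq0$ in $\M$, without needing to discuss the set $\{(\tau-1)u+v=0\}$.)
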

\begin{proof}
The proof follows as in Lemma \ref{contadomaximo}.
\end{proof}

\begin{lemma}\label{lemacompacton2}
    Let $\K\subset E^{+}\setminus\{0\}$ be a compact subset, then there exists $R>0$ such that $\Phi_{W}(w)\leq0,\ \forall\ w\in E(u)\setminus B_{R}(0)$ and  $u\in\K$.
\end{lemma}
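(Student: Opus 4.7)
The strategy mirrors exactly the proof of Lemma \ref{lemacompacto} from the case $N\geq 3$: dominate $\Phi_W$ from above by a functional of the form treated in \cite[Lemma 2.2]{SW} and invoke that lemma on a compact $\K\subset E^{+}\setminus\{0\}$. The only new input is the correct pointwise lower bound on $F$, which this time comes from $(f_6)$ rather than from the power nonlinearity used in Section~2.

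Set $\lambda:=\inf_{x\in\R^{2}}D(x)>0$, which is positive by $(f_6)$. From $(f_6)$ I obtain the uniform lower bound
$$
F_{0}(x,t)\geq D(x)|t|^{q}\geq \lambda\,|t|^{q},\qquad\forall\,(x,t)\in\R^{2}\times\R,
$$
with $q>2$. Since $f^{*}\geq 0$ and the second inequality in $(f_{4})$ gives $F(x,t)>0$ for every $t\neq 0$, one checks readily that $F^{*}(x,t)=F(x,t)-F_{0}(x,t)\geq 0$ for all $(x,t)$, so that $F(x,t)\geq F_{0}(x,t)\geq\lambda|t|^{q}$. Combining this with the positivity of $W$ in $(W_{2})$, which makes $-\tfrac{1}{2}\int W(x)|u|^{2}dx\leq 0$, I arrive at the key upper bound
$$
\Phi_{W}(u)\;\leq\;\frac{1}{2}B(u,u)-\lambda\int|u|^{q}\,dx\;=:\;\Psi_{*}(u),\qquad\forall\,u\in H^{1}(\R^{2}).
$$

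The functional $\Psi_{*}$ has precisely the structure treated in \cite[Lemma 2.2]{SW}: the quadratic form $\tfrac{1}{2}B$ decomposes as $\tfrac{1}{2}\|u^{+}\|^{2}-\tfrac{1}{2}\|u^{-}\|^{2}$ on $E^{+}\oplus E^{-}$, and the subtracted term $\lambda|t|^{q}$ is superquadratic at infinity thanks to $q>2$. Applying that lemma to $\Psi_{*}$ yields, for the given compact $\K\subset E^{+}\setminus\{0\}$, a radius $R>0$ such that
$$
\Psi_{*}(w)\leq 0\qquad\forall\,w\in E(u)\setminus B_{R}(0),\quad\forall\,u\in\K.
$$
By the pointwise inequality $\Phi_{W}\leq\Psi_{*}$, the same $R$ works for $\Phi_{W}$, which proves the lemma.

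The only step that might require a brief verification is the sign of $F^{*}$ over all of $\R$ (its positivity for $t>0$ is immediate from $f^{*}\geq 0$, and the case $t<0$ follows from $F>0$ together with $F_{0}>0$, both given in $(f_{4})$). Once this is in hand, no new ingredient beyond the Section~2 argument is needed; in particular the exponential critical growth of $f$ plays no role here, since we are bounding $\Phi_{W}$ only from above.
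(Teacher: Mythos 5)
Your strategy --- dominate $\Phi_W$ from above by $\Psi_*(u)=\tfrac{1}{2}B(u,u)-\lambda\int|u|^q\,dx$ with $\lambda=\inf_{\R^2}D$, then invoke \cite[Lemma 2.2]{SW} on the compact set $\K$ --- is exactly the route the paper's one-line proof indicates, so the high-level plan is the right one. There is, however, a genuine error in the step where you claim $F(x,t)\geq F_0(x,t)$, i.e.\ $F^*(x,t)\geq 0$, for all $t\in\R$. This is false for $t<0$: since $f^*\geq 0$ by $(f_1)$ and $F^*(x,0)=0$, the map $t\mapsto F^*(x,t)=\int_0^t f^*(x,s)\,ds$ is nondecreasing, so in fact $F^*(x,t)\leq 0$ for every $t<0$, and on $(-\infty,0)$ one has $F\leq F_0$ --- the opposite of what you assert. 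The justification you offer (``the case $t<0$ follows from $F>0$ together with $F_0>0$'') is a non sequitur: positivity of two functions says nothing about the sign of their difference.

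The consequence is that the pointwise bound $F(x,t)\geq\lambda|t|^q$, and hence the domination $\Phi_W\leq\Psi_*$, has only been verified when the argument $t$ is nonnegative. That is not enough, because $E(u)\setminus B_R(0)$ contains sign-changing elements (in particular all of $E^-$), so the reduction to \cite[Lemma 2.2]{SW} applied to $\Psi_*$ is not fully justified as written. What the argument actually needs is a superquadratic lower bound on $F(x,\cdot)$ valid for \emph{both} signs of $t$, uniform in $x$; condition $(f_6)$ supplies this only for $F_0$, and $f^*\geq 0$ does not propagate it to $F$ on the negative half-line. To close the gap you would either have to extract such a two-sided bound from the Ambrosetti--Rabinowitz condition $(f_4)$ (which would require a uniform-in-$x$ lower bound on $F(x,\pm 1)$, not obviously available in the asymptotically periodic setting), or note that the Section~2 reduction really uses a sign condition of the form $tf^*(x,t)\geq 0$ rather than merely $f^*\geq 0$, which is what would make $F^*\geq 0$ on all of $\R$. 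As written, the step does not go through.
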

\begin{proof}
Fix $\lambda:=\inf_{x \in \R^2}D(x)$ and repeat the argument used in the proof of Lemma \ref{lemacompacto}.
\end{proof}

\begin{lemma}\label{supercontinuidaden2}
	For all $u\in H^{1}(\R^{2})$, the functional $\Phi_W|_{E(u)}$ is weakly upper semicontinuous.
\end{lemma}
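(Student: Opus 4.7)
The argument mirrors that of Lemma \ref{supercontinuidade}, with the only genuine novelty being that $F$ now has exponential critical growth; crucially, we never need to pass to the limit inside $\int F(x,w_n)\,dx$, only to bound it from below via Fatou.

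First, $E(u)=E^{-}\oplus\R u^{+}$ is convex and strongly closed in $H^{1}(\R^{2})$, hence weakly closed, so weak convergence in $H^{1}(\R^{2})$ of a sequence in $E(u)$ stays in $E(u)$. We split
$$
\Phi_{W}\big|_{E(u)}=\widetilde{\Psi}-\widetilde{\Phi},\quad \widetilde{\Psi}(w):=\tfrac{1}{2}B(w,w),\quad \widetilde{\Phi}(w):=\tfrac{1}{2}\int W(x)|w|^{2}\,dx+\int F(x,w)\,dx,
$$
and show $\widetilde{\Psi}$ is weakly upper semicontinuous and $\widetilde{\Phi}$ is weakly lower semicontinuous on $E(u)$.

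For $\widetilde{\Psi}$: if $w_{n}=s_{n}u^{+}+v_{n}\rightharpoonup w=su^{+}+v$ in $E(u)$ with $v_{n},v\in E^{-}$, then testing the weak convergence against $u^{+}$ (and using that $\R u^{+}$ and $E^{-}$ are $\langle\cdot,\cdot\rangle$-orthogonal) yields $s_{n}\to s$ in $\R$ and $v_{n}\rightharpoonup v$ in $E^{-}$. Since $\widetilde{\Psi}(w)=\tfrac{1}{2}(s^{2}\|u^{+}\|^{2}-\|v\|^{2})$ and $\|v\|\leq\liminf\|v_{n}\|$, exactly as in the proof of Lemma \ref{supercontinuidade} we get $\widetilde{\Psi}(w)\geq\limsup_{n\to\infty}\widetilde{\Psi}(w_{n})$.

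For $\widetilde{\Phi}$: weak convergence $w_{n}\rightharpoonup w$ in $H^{1}(\R^{2})$ gives boundedness and, up to a subsequence, pointwise a.e.\ convergence $w_{n}(x)\to w(x)$ in $\R^{2}$. By $(W_{2})$ we have $W(x)>0$, and from $(f_{4})$ we have $F(x,t)\geq 0$ for every $(x,t)\in\R^{2}\times\R$. Therefore the integrands $W(x)|w_{n}|^{2}$ and $F(x,w_{n})$ are nonnegative, and Fatou's Lemma delivers
$$
\widetilde{\Phi}(w)=\tfrac{1}{2}\int W(x)|w|^{2}\,dx+\int F(x,w)\,dx\leq\liminf_{n\to\infty}\widetilde{\Phi}(w_{n}).
$$
Since the inequality holds for every subsequence along which the a.e.\ convergence is extracted, the full sequence satisfies $\widetilde{\Phi}(w)\leq\liminf_{n\to\infty}\widetilde{\Phi}(w_{n})$. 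Combining the two semicontinuity statements yields $\Phi_{W}(w)\geq\limsup_{n\to\infty}\Phi_{W}(w_{n})$, establishing weak upper semicontinuity of $\Phi_{W}|_{E(u)}$.

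The main subtlety is precisely what makes the argument work in the critical exponential setting: we avoid any dominated-convergence type argument (which would require controlling $e^{4\pi w_{n}^{2}}$) by exploiting the sign of $F$. The positivity of $F$, guaranteed by $(f_{4})$, is therefore the key structural ingredient; the exponential critical growth in $(f_{1})$ and $(f_{5})$ plays no role in this lemma.
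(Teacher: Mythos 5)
Your proof is correct and follows the same route as the paper, which simply refers back to the proof of Lemma \ref{supercontinuidade}: split $\Phi_W|_{E(u)}$ into the quadratic part $\widetilde{\Psi}$ (weakly u.s.c.\ via the finite-dimensionality of $\R u^+$ and weak lower semicontinuity of $\|\cdot\|$ on $E^-$) and the nonlinear part $\widetilde{\Phi}$ (weakly l.s.c.\ via Fatou and nonnegativity of the integrand). Your added observation that $(f_4)$ gives $F\geq 0$ and that the exponential critical growth in $(f_1)$, $(f_5)$ is irrelevant here is accurate and makes explicit why the $N\geq 3$ argument carries over verbatim.
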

\begin{proof}
See proof of Lemma \ref{supercontinuidade}. 
\end{proof}

\begin{lemma}\label{funcaomn2}
	For all $u\in H^{1}(\R^{2})\setminus E^{-}$, $\M\cap\hat{E}(u)$ is a singleton set and the element of this set is the unique global maximum of $\Phi_{W}|_{\hat{E}(u)}$
\end{lemma}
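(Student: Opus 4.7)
The plan is to mimic the proof of Lemma \ref{funcaom} in Section 2, which in turn follows \cite[Lemma 2.6]{SW}, adjusting the growth estimates to accommodate the exponential critical growth present in dimension two. Since $\hat{E}(u) = \hat{E}(u^+)$, I would replace $u$ by $u^+$ and assume without loss of generality that $u \in E^+ \setminus \{0\}$, so that $\hat{E}(u) = E^- \oplus [0,+\infty) u$.

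First I would establish existence of a maximizer of $\Phi_W$ on $\hat{E}(u)$. Lemma \ref{lemacompacton2} applied to $\K = \{u/\|u\|\}$ provides $R > 0$ such that $\Phi_W \leq 0$ on $E(u) \setminus B_R(0)$, so $\sup_{\hat{E}(u)} \Phi_W$ coincides with the supremum over the bounded convex strongly closed set $\hat{E}(u) \cap \overline{B_R(0)}$, which is weakly compact in the reflexive space $H^1(\R^2)$. Since $\Phi_W|_{E(u)}$ is weakly upper semicontinuous by Lemma \ref{supercontinuidaden2}, this supremum is attained at some $w_0 \in \hat{E}(u) \cap \overline{B_R(0)}$.

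Next I would argue that $w_0 \neq 0$ and $w_0 \in \M$. For nontriviality I need an $N=2$ analogue of Lemma \ref{primeirageometria}: for $\rho > 0$ small, $\inf_{B_\rho(0) \cap E^+} \Phi_W > 0$. This would use $(f_2)$ to bound $F(x,t) \leq \varepsilon t^2$ near $t=0$, and $(f_5)$ together with the Trudinger--Moser inequality (Lemma \ref{lema 1.2}) to control $\int F(x,w)\,dx$ for small $\|w\|$ by a higher power of $\|w\|$, yielding $\Phi_W(w) \geq \tfrac{1}{2}\bigl(1 - \Theta/\overline{\Lambda}\bigr)\|w\|^2 - C\|w\|^{p} > 0$ for some $p > 2$. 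Once $\sup_{\hat{E}(u)} \Phi_W > 0$ is known, I write $w_0 = t_0 u + v_0$ with $t_0 \geq 0$ and $v_0 \in E^-$. The case $t_0 = 0$ is ruled out because $\Phi_W|_{E^-} \leq 0$ (all contributing terms being nonpositive by $(W_2)$ and $(f_4)$), so $t_0 > 0$. Then $w_0$ lies in the relative interior of $\hat{E}(u)$, and first-order optimality in the $u$-direction and in arbitrary $v \in E^-$ directions yields $\Phi_W'(w_0) u = 0$ and $\Phi_W'(w_0) v = 0$ for all $v \in E^-$, whence $\Phi_W'(w_0) w_0 = 0$; thus $w_0 \in \M$.

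For uniqueness, suppose $w_1, w_2 \in \M \cap \hat{E}(u)$ are distinct. Both have strictly positive $u$-coefficient by the above, so $\hat{E}(w_1) = \hat{E}(u) = \hat{E}(w_2)$ and each lies in the other's cone; applying Lemma \ref{contadomaximon2} with the element $w_1 \in \M$ gives $\Phi_W(w_2) < \Phi_W(w_1)$, and by symmetry $\Phi_W(w_1) < \Phi_W(w_2)$, a contradiction. Hence $\M \cap \hat{E}(u) = \{w_0\}$ and $w_0$ is the unique global maximum of $\Phi_W|_{\hat{E}(u)}$. The main technical obstacle is the geometric bound for small $w \in E^+$: the exponential critical nonlinearity permitted by $(f_5)$ forces a careful application of the Trudinger--Moser inequality (together with the norm equivalence \eqref{equivalente}) to absorb $\int F(x,w)\,dx$ into a higher-order term, unlike the polynomial estimate that was routine in the proof of Lemma \ref{primeirageometria} for $N \geq 3$.
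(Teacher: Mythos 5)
Your proposal is correct and takes essentially the same route as the paper, which simply refers to the proof of Lemma \ref{funcaom} and thence to \cite[Lemma 2.6]{SW}: existence of a maximizer on $\hat{E}(u)$ via Lemmas \ref{lemacompacton2} and \ref{supercontinuidaden2}, nontriviality via the small-sphere positivity (the content of Lemma \ref{primeirageometrian2}, where the Trudinger--Moser inequality replaces the polynomial estimate used for $N\geq 3$), membership in $\M$ via interior first-order conditions, and uniqueness via Lemma \ref{contadomaximon2}. The only caveat is that in the last step you are implicitly reading the hypothesis of Lemma \ref{contadomaximon2} as $s\geq -1$ rather than the printed $s\geq 1$, which is clearly the intended (and needed) range since it corresponds to $u+w\in\hat{E}(u)$.
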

\begin{proof}
See proof of Lemma \ref{funcaom}.
\end{proof}

\vspace{0.5 cm}

In the proof of next lemma the fact that $f$ has an exponential critical growth brings some difficulty and we will do its proof. 

\begin{lemma}\label{primeirageometrian2}
There exists $\rho>0$ such that $\displaystyle\inf_{B_{\rho}(0)\cap E^{+}}\Phi_{W}>0$.
\end{lemma}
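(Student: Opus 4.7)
\medskip

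\noindent\textbf{Plan of proof.} The plan is to mimic the computation of Lemma \ref{primeirageometria}, but with the power nonlinearity replaced by the exponential critical terms controlled via the Trudinger--Moser inequality. Since $u\in E^{+}$ means $u^{-}=0$, we have
\[
\Phi_{W}(u)=\frac{1}{2}\|u\|^{2}-\frac{1}{2}\int W(x)|u|^{2}\,dx-\int F(x,u)\,dx.
\]
Using $(W_{2})$ together with the standard estimate $\|u\|^{2}\geq \overline{\Lambda}\,|u|_{2}^{2}$ on $E^{+}$ (inherited from the spectral decomposition), the quadratic part is bounded below by $\tfrac{1}{2}\bigl(1-\Theta/\overline{\Lambda}\bigr)\|u\|^{2}$, which is strictly positive by $(W_{2})$. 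The only remaining task is therefore to absorb $\int F(x,u)\,dx$ into this quadratic term when $\|u\|$ is small.

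To estimate $F$, I would combine $(f_{2})$ and $(f_{5})$ in the usual way: given $\varepsilon>0$ and any fixed $q>2$, there exists $C_{\varepsilon}>0$ such that
\[
|f(x,t)|\leq \varepsilon |t|+C_{\varepsilon}|t|^{q-1}\bigl(e^{4\pi t^{2}}-1\bigr),\qquad \forall (x,t)\in\R^{2}\times\R,
\]
and integrating one gets $|F(x,t)|\leq \tfrac{\varepsilon}{2}t^{2}+C_{\varepsilon}|t|^{q}\bigl(e^{4\pi t^{2}}-1\bigr)$. Then Hölder's inequality gives
\[
\int |u|^{q}\bigl(e^{4\pi u^{2}}-1\bigr)\,dx\leq |u|_{2q}^{q}\Bigl(\int \bigl(e^{4\pi u^{2}}-1\bigr)^{2}\,dx\Bigr)^{1/2}.
\]
By Lemma \ref{lema 1.3} with $t=2$ and some $\beta>2$, $(e^{4\pi u^{2}}-1)^{2}\leq C(e^{4\pi\beta u^{2}}-1)$, so it remains to control $\int(e^{4\pi\beta u^{2}}-1)\,dx$.

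This is where I would use the Trudinger--Moser inequality (Lemma \ref{lema 1.2}). By the norm equivalence \eqref{equivalente}, $\|u\|_{H^{1}(\R^{2})}^{2}\leq a\|u\|^{2}$, so if I pick $\rho>0$ with $a\beta\rho^{2}<1$, then for every $u\in E^{+}$ with $\|u\|\leq \rho$ the function $v:=u/\|u\|_{H^{1}(\R^{2})}$ satisfies $|\nabla v|_{2}^{2}\leq 1$ and $|v|_{2}\leq 1$, and $4\pi\beta u^{2}=(4\pi\beta\|u\|_{H^{1}}^{2})v^{2}$ with $4\pi\beta\|u\|_{H^{1}}^{2}<4\pi$. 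Hence Lemma \ref{lema 1.2} yields a uniform bound $\int(e^{4\pi\beta u^{2}}-1)\,dx\leq C$. Using also the Sobolev embedding $|u|_{2q}^{q}\leq C\|u\|^{q}$, we collect
\[
\Phi_{W}(u)\geq \tfrac{1}{2}\bigl(1-\Theta/\overline{\Lambda}\bigr)\|u\|^{2}-C\varepsilon\|u\|^{2}-C'\|u\|^{q}.
\]
Choosing $\varepsilon$ small so that $C\varepsilon<\tfrac{1}{4}(1-\Theta/\overline{\Lambda})$ and then shrinking $\rho$ (keeping $a\beta\rho^{2}<1$) so that $C'\rho^{q-2}<\tfrac{1}{4}(1-\Theta/\overline{\Lambda})$, we obtain $\Phi_{W}(u)\geq c\rho^{2}>0$ on the sphere $\|u\|=\rho$ (equivalently on $B_{\rho}(0)\cap E^{+}$ away from $0$), which is the content of the lemma. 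The main technical obstacle is the bootstrap between Hölder, Lemma \ref{lema 1.3}, and Lemma \ref{lema 1.2}: one has to pick $\beta>2$ and then ensure $\rho$ is chosen so small that the argument in the Trudinger--Moser inequality stays strictly below the critical threshold $4\pi$.
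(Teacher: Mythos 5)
Your proposal is correct and follows essentially the same route as the paper: split $F$ via $(f_2)$ and $(f_5)$ into a quadratic part plus an exponential tail, apply Cauchy--Schwarz, scale to bring the Trudinger--Moser inequality (Lemma \ref{lema 1.2}) to bear, and shrink $\rho$ so the exponent stays below $4\pi$. The only cosmetic difference is that the paper uses the elementary bound $(e^{a}-1)^{2}\leq e^{2a}-1$ in place of your appeal to Lemma \ref{lema 1.3} with $\beta>2$, and you are in fact slightly more careful than the paper in tracking the equivalence constant $a$ from \eqref{equivalente} when passing to the normalized function for the Moser inequality.
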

\begin{proof}
Given $p>2$ and $\epsilon>0$, there is $C_{\epsilon}>0$ such that
$$
|F(x,t)|\leq\epsilon|t|^{2}+C_{\epsilon}|t|^{p}(e^{4 \pi t^{2}}-1), \quad \forall (x,t) \in \R^2 \times \R. 
$$
Then, for all $u\in E^{+}$, the Lemmas \ref{lema 1.2} and \ref{lema 1.3} lead to 
$$
\begin{array}{ll}
\Phi_{W}(u)= & \frac{1}{2}||u||^{2}-\frac{1}{2}\int W(x)|u|^{2}dx-\int F(x,u)dx \\
& \geq\frac{1}{2}||u||^{2}-\frac{\Theta}{2}\int |u|^{2}dx-\epsilon\int|u|^{2}dx 
-C_{\epsilon}\int |u|^{p}(e^{4 \pi u^{2}}-1)dx \\
& =\frac{1}{2}||u||^{2}-\frac{\Theta}{2\overline{\Lambda}}|||u||^{2}-\frac{\epsilon}{\overline{\Lambda}}||u||^{2}-C_{\epsilon}|u|_{{2p}}^{p}\left(\int (e^{8 \pi u^{2}}-1)dx\right)^{\frac{1}{2}} \\
& \geq \left[\frac{1}{2}\left(1-\frac{\Theta}{\overline{\Lambda}}\right)-\frac{\epsilon}{\overline{\Lambda}}\right]||u||^{2}-C||u||^{p}\left(\int (e^{8 \pi u^{2}}-1)dx\right)^{\frac{1}{2}}.
\end{array}
$$
By Lemma \ref{lema 1.2}, if $\rho < \frac{\sqrt{3}}{2\sqrt{2}}$, 
$$
\sup_{\|u\|=\rho}\int (e^{8 \pi u^{2}}-1)dx \leq \sup_{\|v\|\leq 1}\int (e^{ 3 \pi u^{2}}-1)dx=C<\infty. 
$$
So, 
$$
\Phi_{W}(u)\geq\left[\frac{1}{2}\left(1-\frac{\Theta}{\overline{\Lambda}}\right)-\frac{\epsilon}{\overline{\Lambda}}\right]||u||^{2}-C||u||^{p}.
$$
Hence, decreasing $\rho$ if necessary and  fixing $\epsilon$ small enough, we get 
$$
\Phi_{W}(u)\geq\left[\frac{1}{2}\left(1-\frac{\Theta}{\overline{\Lambda}}\right)-\frac{\epsilon}{\overline{\Lambda}}\right]\rho^{2}-C\rho^{p}=\beta>0. 
$$
\end{proof}

\begin{lemma}\label{cpositivon2}
	The real number $\gamma_W$ is positive. In addition, if $u\in\M$ then $||u^{+}||\geq\max\{||u^{-}||,\sqrt{2 \gamma_W} \}$.
\end{lemma}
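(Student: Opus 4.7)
The plan is to mirror the argument used for Lemma \ref{cpositivo} in the $N\geq 3$ case, since all the necessary ingredients (a positivity statement of $\Phi_W$ on a small sphere in $E^+$, and a unique maximum on $\hat{E}(u)$) are now available in the $N=2$ setting through Lemmas \ref{primeirageometrian2} and \ref{funcaomn2}.

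First I would invoke Lemma \ref{primeirageometrian2} to fix $\rho>0$ with
$$
l := \inf_{B_{\rho}(0)\cap E^{+}} \Phi_{W} > 0.
$$
Given any $u \in \M$, the definition of $\M$ forces $u \notin E^{-}$, hence $u^{+}\neq 0$. Then the point $\rho u^{+}/\|u^{+}\|$ belongs to $\hat{E}(u)\cap E^{+}$ and has norm $\rho$, so Lemma \ref{funcaomn2}, which identifies $u$ as the unique global maximum of $\Phi_{W}$ restricted to $\hat{E}(u)$, yields
$$
\Phi_{W}(u) \;\geq\; \Phi_{W}\!\left(\frac{\rho}{\|u^{+}\|} u^{+}\right) \;\geq\; l.
$$
Taking the infimum over $u\in\M$ gives $\gamma_{W} \geq l > 0$, which is the first assertion.

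For the second assertion, the key observation is that the non-quadratic part of $\Phi_W$ is nonnegative: by $(W_2)$ we have $W(x)>0$ for every $x$, and by $(f_4)$ (which asserts $0 < \mu F(x,t) \leq t f(x,t)$ for $t\neq 0$) we have $F(x,t)\geq 0$ for all $(x,t)\in\R^{2}\times\R$. Consequently, for every $u\in\M$,
$$
\gamma_{W} \;\leq\; \Phi_{W}(u) \;\leq\; \tfrac{1}{2} B(u,u) \;=\; \tfrac{1}{2}\bigl(\|u^{+}\|^{2} - \|u^{-}\|^{2}\bigr),
$$
which rearranges to $\|u^{+}\|^{2} \geq \|u^{-}\|^{2} + 2\gamma_{W}$, immediately giving the bound $\|u^{+}\| \geq \max\{\|u^{-}\|, \sqrt{2\gamma_{W}}\}$.

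The argument is essentially routine once Lemmas \ref{primeirageometrian2} and \ref{funcaomn2} are in place, so I do not foresee any real obstacle; the only point requiring minor care is the verification that $F(x,\cdot)\geq 0$ pointwise, which follows directly from the sign condition built into $(f_4)$ and $F(x,0)=0$. This is also what makes the $N=2$ argument parallel to the $N\geq 3$ one, where nonnegativity came for free from the explicit form $F(x,t)=\frac{h(x)}{q+1}|t|^{q+1}+\frac{k(x)}{2^{*}}|t|^{2^{*}}$.
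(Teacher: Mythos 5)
Your proof is correct and follows essentially the same argument as the paper's, which for Lemma \ref{cpositivon2} simply refers back to the proof of Lemma \ref{cpositivo} and replaces Lemmas \ref{primeirageometria} and \ref{funcaom} by their $N=2$ counterparts, Lemmas \ref{primeirageometrian2} and \ref{funcaomn2}. Your explicit remark that $(W_2)$ and $(f_4)$ guarantee $W\geq 0$ and $F\geq 0$, so that $\Phi_W(u)\leq\frac{1}{2}B(u,u)$, is a useful clarification of a step the paper leaves implicit, but does not change the argument.
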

\begin{proof} See proof of Lemma \ref{cpositivo}
\end{proof}

\vspace{0.5 cm}

The next lemma shows that $(PS)$ sequences of $\Phi_W$ are bounded, as we are working with the exponential critical growth the arguments explored in Section 2 does not work in this case and a new proof must be done.

\begin{lemma}\label{limitacaosequenciapsn2}
	If $(u_{n})$ is a sequence such that  
	$$
	\Phi_{W}(u_{n})\leq d,\ \ \ \pm \Phi_{W}'(u_{n})u_{n}^{\pm}\leq d||u_{n}||\ \ \ \text{and}\ \ \ -\Phi_{W}'(u_{n})u_{n}\leq d
	$$
	for some $d>0$, then $(u_{n})$ is bounded in $H^{1}(\R^{2})$ and $(f(u_n)u_n)$ is bounded in $L^{1}(\R^2)$.
\end{lemma}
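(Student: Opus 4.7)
The proof splits into two conclusions, attacked in the order stated: the $L^{1}$-bound on $(f(\cdot,u_n)u_n)$ first, then the $H^{1}$-boundedness of $(u_n)$. The $L^{1}$-bound is a short calculation using only $(f_4)$ together with the hypotheses $\Phi_{W}(u_n)\le d$ and $-\Phi_{W}'(u_n)u_n\le d$; the $H^{1}$-bound additionally requires the spectral inequalities $\pm\Phi_{W}'(u_n)u_n^{\pm}\le d\|u_n\|$, the condition $\Theta<\overline{\Lambda}$ from $(W_2)$, and the uniform $L^{q}$-bound on $(u_n)$ that drops out of $(f_6)$. The exponential critical growth prevents us from reusing the polynomial trick of Lemma~\ref{limitacaosequenciaps} (namely $|f|^{r}\le C\,tf$ for some $r>1$), so Trudinger--Moser (Lemmas~\ref{lema 1.2}, \ref{lema 1.3}) must be substituted in its place.

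For the $L^{1}$-bound, I would form $2\Phi_{W}(u_n)-\Phi_{W}'(u_n)u_n$: the $B$-terms and $W$-terms cancel, leaving $\int[f(x,u_n)u_n-2F(x,u_n)]\,dx$. By hypothesis this is at most $2d+d=3d$; by $(f_4)$ with $\mu>2$ the integrand is at least $(1-2/\mu)f(x,u_n)u_n\ge 0$, so
\[
\int f(x,u_n)u_n\,dx \le \frac{3d}{1-2/\mu}.
\]
The same $(f_4)$ then gives $\int F(x,u_n)\le C$; combining with $f^{*}\ge 0$ and $(f_6)$, which yield $F\ge F_{0}\ge D(x)|t|^{q}$, one obtains that $(u_n)$ is bounded in $L^{q}(\R^{2})$ for some $q>2$.

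Next I would add the inequalities $\Phi_{W}'(u_n)u_n^{+}\le d\|u_n\|$ and $-\Phi_{W}'(u_n)u_n^{-}\le d\|u_n\|$ and use $B(u_n,u_n^{\pm})=\pm\|u_n^{\pm}\|^{2}$ to obtain
\[
\|u_n\|^{2}\le 2d\|u_n\|+\int W(x)u_n(u_n^{+}-u_n^{-})\,dx+\int f(x,u_n)(u_n^{+}-u_n^{-})\,dx.
\]
Since $u_n(u_n^{+}-u_n^{-})=(u_n^{+})^{2}-(u_n^{-})^{2}$, $W\ge 0$, and $|u_n^{+}|_{2}^{2}\le\|u_n^{+}\|^{2}/\overline{\Lambda}$, the $W$-integral is at most $(\Theta/\overline{\Lambda})\|u_n\|^{2}$, which by $(W_2)$ leaves a positive leftover coefficient $1-\Theta/\overline{\Lambda}>0$ on the left. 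It therefore suffices to dominate the remaining integral by $\varepsilon\|u_n\|^{2}+O(\|u_n\|)$ with $\varepsilon$ to be taken small.

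This last step I expect to be the main obstacle. The plan is to use $(f_2)$ and $(f_5)$ to deduce, for every $\varepsilon>0$ and every $p\in(1,1+q/2)$, the bound $|f(x,t)|\le\varepsilon|t|+C_{\varepsilon}|t|^{p-1}(e^{4\pi t^{2}}-1)$; the linear piece then contributes $\varepsilon C\|u_n\|^{2}$, absorbable for $\varepsilon$ small. For the critical piece I would apply a three-factor H\"older inequality placing $|u_n|^{p-1}$ in $L^{q/(p-1)}$ (bounded by the $L^{q}$-estimate above), $|u_n^{+}-u_n^{-}|$ in $L^{2}$ (contributing a factor $C\|u_n\|$ via Sobolev), and $e^{4\pi u_n^{2}}-1$ in $L^{b}$ with $b$ only slightly larger than $2$; Lemma~\ref{lema 1.3} reduces the middle factor to controlling $\int(e^{4\pi\beta u_n^{2}}-1)\,dx$ for some $\beta>b$, which Lemma~\ref{lema 1.2} bounds uniformly provided $\|u_n\|$ stays within a Trudinger--Moser threshold. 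To close this without circularity I would then argue by contradiction: supposing $\|u_n\|\to\infty$ and rescaling $v_n:=u_n/\|u_n\|$, the uniform $L^{q}$-bound and the just-derived $L^{1}$-bound on $f(x,u_n)u_n$ force the right-hand side of the key inequality, divided by $\|u_n\|^{2}$, to tend to $0$, contradicting $1-\Theta/\overline{\Lambda}>0$.
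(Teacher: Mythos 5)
Your first conclusion ($L^1$-bound on $f(\cdot,u_n)u_n$, hence $L^q$-bound on $u_n$) is correct and matches the paper; your reduction of the $H^1$-bound to controlling $\int f(x,u_n)(u_n^+-u_n^-)\,dx$ after absorbing the $W$-term via $(W_2)$ is also the right structure. The gap is in the last step, and it is not just a technicality.

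The three-factor H\"older estimate you propose places $e^{4\pi u_n^2}-1$ in $L^{b}$ with $b>2$, and after Lemma~\ref{lema 1.3} this forces you to bound $\int(e^{4\pi\beta u_n^2}-1)\,dx$ with $\beta>b>2$. Lemma~\ref{lema 1.2} controls such an integral only when $\beta\,|\nabla u_n|_2^2<1$, i.e.\ when $\|u_n\|$ is already smaller than a fixed Trudinger--Moser threshold --- strictly stronger than mere boundedness. Your proposed fix by contradiction does not remove this: after dividing the key inequality by $\|u_n\|^2$, the troublesome term becomes $\|u_n\|^{-2}\int f(x,u_n)(u_n^+-u_n^-)\,dx$, and the exponential factor inside $f(x,u_n)$ still involves the unnormalized $u_n$, so Trudinger--Moser still cannot be invoked on it. Moreover, the $L^1$-bound on $f(x,u_n)u_n$ does not pass to $f(x,u_n)(u_n^+-u_n^-)$ because $u_n^\pm$ are spectral projections, not pointwise comparable to $u_n$; so the hoped-for convergence of that quotient to $0$ is unsubstantiated.

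The paper's device precisely sidesteps this. It writes the product $f(x,u_n)v_n$, with $v_n:=u_n^+/\|u_n^+\|_{H^1}$ a \emph{normalized} direction, and applies the do \'O--Ruf inequality
$rs\le (e^{r^2}-1)+s(\log^+ s)^{1/2}+\tfrac14 s^2\chi_{[0,e^{1/4}]}(s)$
with $r=|v_n|$ and $s=\Gamma^{-1}|f(x,u_n)|$. The exponential term then lands on $v_n$ (which has unit $H^1$-norm, so Lemma~\ref{lema 1.2} applies unconditionally), while the logarithm term is bounded, via $(f_5)$, by $\sqrt{4\pi}\int|f(x,u_n)||u_n|\,dx$, which is exactly the $L^1$-quantity you already controlled; the small-$s$ term is handled by the pointwise implication $|f|\le\Gamma e^{1/4}\Rightarrow |f|^2\le Kft$. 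That transfer of the Trudinger--Moser load from the unbounded $u_n$ onto the normalized $v_n$ is the idea your argument is missing, and without it the proof does not close.
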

\begin{proof}
First of all, note that
$$
\left(\frac{1}{2}-\frac{1}{\theta}\right)\int f(x,u_{n})u_{n}dx\leq\Phi_W(u_{n})-\frac{1}{2}\Phi_W'(u_{n})u_{n}\leq 2d.
$$
Hence, $\left(\int f(x,u_{n})u_{n}dx\right)$ is bounded. Recalling that $f(x,t)t \geq 0$ for all $t \in\ \R$ and $x \in \mathbb{R}^N$, it follows that $(f(x,u_n)u_n)$ is bounded in $L^{1}(\R^N)$. On the other hand, we know that 
$$
||u_{n}^{+}||^{2}\leq d||u_{n}^{+}||+\int f(x,u_{n})u_{n}^{+}dx+\int W(x)u_{n}u_{n}^{+}dx
$$
and so,
\begin{equation}\label{desigualdadecommais}
||u_{n}^{+}||^{2}\leq d||u_{n}^{+}||+\left(\int f(x,u_{n})v_{n}dx\right)||u_{n}^{+}||_{H^{1}(\R^{N})}+\int W(x)u_{n}u_{n}^{+}dx
\end{equation}
where $v_{n}:=\frac{u_{n}^{+}}{||u_{n}^{+}||_{H^{1}(\R^{2})}}$. 

\begin{claim}
$\left(\int f(x,u_{n})v_{n}dx\right)$ is a bounded sequence.
\end{claim}

\noindent Indeed, by a direct computation, there exists $K>0$ such that
\begin{equation}\label{implicacao}
|f(x,t)|\leq Ce^{1/4} \Rightarrow |f(x,t)|^{2}\leq K f(x,t)t, \quad \mbox{uniformly in} \quad x.
\end{equation}
Moreover, by \cite[Lemma 2.11]{DORUF},
\begin{equation}\label{implicacao2}
rs\leq(e^{r^{2}}-1)+s(log^{+}s)^{1/2}+\frac{1}{4}s^{2}\chi_{[0,e^{1/4}]}(s) \quad \forall r,s \geq 0.
\end{equation}
Now, the Lemma \ref{lema 1.2} combined with the above  inequalities  for  $r=|v_{n}|$ and $s=\frac{1}{\Gamma }|f(u_{n})|$ leads to  
$$
\begin{array}{l}
\left|\int f(x,u_{n})v_{n}dx\right|\leq \Gamma\int\frac{1}{\Gamma}|f(u_{n})||v_{n}|dx\leq \Gamma\int(e^{v_{n}^{2}}-1)dx+ \\
+\int |f(x,u_{n})|\left(log^{+}\left(\frac{1}{\Gamma}|f(x,u_{n})|\right)\right)^{1/2}dx+ \\
\frac{1}{4\Gamma}\int |f(x,u_{n})|^{2}\chi_{[0,e^{1/4}]}\left(\frac{1}{\Gamma}|f(x,u_{n})|\right)dx\leq \\
\Gamma T+\int |f(x,u_{n})|\left(log^{+}\left(e^{4\pi u_{n}^{2}}\right)\right)^{1/2}dx+\frac{1}{4\Gamma}\int_{|f(x,u_{n})|\leq \Gamma e^{1/4}} |f(x,u_{n})|^{2}dx\leq \\
\Gamma T+\int |f(x,u_{n})||u_{n}|\sqrt{4\pi}dx+\frac{1}{4\Gamma}\int_{|f(x,u_{n})|\leq \Gamma e^{1/4}}Kf(x,u_{n})u_{n}dx.
\end{array}
$$
As $(f(x,u_n)u_n)$ is bounded in $L^{1}(\R^2)$, the last inequality yields $\left(\int f(x,u_{n})v_{n}dx\right)$ is bounded. Consequently, there exists $A_{0}>0$ satisfying 
$$
\left|\int f(x,u_{n})v_{n}dx\right|\leq A_{0} \quad \forall n \in \mathbb{N}.
$$
Thereby, by (\ref{desigualdadecommais}), 
\begin{equation}\label{desigmais}
||u_{n}^{+}||^{2}\leq d||u_{n}^{+}||+A_{0}||u_{n}^{+}||_{H^{1}(\R^{N})}+\int W(x)u_{n}u_{n}^{+}dx.
\end{equation}
Analogously, there is $B_{0}>0$ such that
\begin{equation}\label{desigmenos}
||u_{n}^{-}||^{2} \leq 
d||u_{n}^{-}||+B_{0}||u_{n}^{-}||_{H^{1}(\R^{N})}-\int W(x)u_{n}u_{n}^{-}dx.
\end{equation}
The inequalities (\ref{desigmais}) and (\ref{desigmenos}) combine to give 
$$
\begin{array}{l}
||u_{n}||^{2}\leq {C}||u_{n}||+{C}||u_{n}||+\int W(x)(u_{n}u_{n}^{+}-u_{n}u_{n}^{-})dx=2{C}||u_{n}||+\\
+\int W(x)((u_{n}^{+})^{2}-(u_{n}^{-})^{2})dx\leq 2{C}||u_{n}||+\int W(x)(u_{n}^{+})^{2}dx\leq 2{C}||u_{n}||+\frac{\Theta}{\overline{\Lambda}}||u_{n}^{+}||^{2}
\end{array}
$$
for some ${C}>0$. Hence,
$$
\left(1-\frac{\Theta}{\overline{\Lambda}}\right)||u_{n}||^{2}\leq 2\widetilde{C}||u_{n}||,
$$
from where it follows that $(u_{n})$ is bounded.
\end{proof}

\vspace{0.5 cm}

As a byproduct of the last lemma we have the corollary below

\begin{corollary}\label{corolariocoercividadeemMn2}
$\Phi_{W}$ is coercive on $\M$, that is, $\Phi_{W}(u)\rightarrow+\infty$ as $||u||\rightarrow+\infty,\ u \in \M$.
\end{corollary}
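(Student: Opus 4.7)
The plan is to derive this as an immediate consequence of Lemma \ref{limitacaosequenciapsn2}, using the fact that membership in $\mathcal{M}$ automatically provides the three inequalities required by that lemma, in their strongest possible form (equalities).

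More precisely, I would argue by contradiction. Suppose $\Phi_W$ is not coercive on $\mathcal{M}$. Then there exist a constant $d > 0$ and a sequence $(u_n) \subset \mathcal{M}$ with $\|u_n\| \to +\infty$ and $\Phi_W(u_n) \leq d$ for every $n$. By definition of $\mathcal{M}$, we have $\Phi_W'(u_n)u_n = 0$ and $\Phi_W'(u_n)v = 0$ for all $v \in E^-$. Taking $v = u_n^- \in E^-$ yields $\Phi_W'(u_n)u_n^- = 0$, and since $u_n = u_n^+ + u_n^-$ and $\Phi_W'(u_n)u_n = 0$, it follows that $\Phi_W'(u_n)u_n^+ = 0$ as well. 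Consequently,
$$
\Phi_W(u_n) \leq d, \qquad \pm \Phi_W'(u_n) u_n^\pm = 0 \leq d\|u_n\|, \qquad -\Phi_W'(u_n) u_n = 0 \leq d,
$$
so the hypotheses of Lemma \ref{limitacaosequenciapsn2} are satisfied. That lemma then forces $(u_n)$ to be bounded in $H^1(\R^2)$, contradicting $\|u_n\| \to +\infty$.

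Thus there is no real work beyond checking that the defining conditions of $\mathcal{M}$ are exactly a special case (with equalities in place of the bounds $d\|u_n\|$ and $d$) of the hypothesis of Lemma \ref{limitacaosequenciapsn2}. I do not anticipate any technical obstacle here, since all the delicate work with the exponential critical nonlinearity was already carried out in the proof of Lemma \ref{limitacaosequenciapsn2}.
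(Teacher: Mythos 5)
Your proof is correct and matches the paper's intended derivation: the paper simply labels the corollary a ``byproduct'' of Lemma \ref{limitacaosequenciapsn2}, and your argument spells out exactly why — the conditions $\Phi_W'(u_n)u_n = 0$ and $\Phi_W'(u_n)u_n^- = 0$ (hence also $\Phi_W'(u_n)u_n^+ = 0$) defining $\mathcal{M}$ trivially satisfy the three bounds in that lemma, so a sequence in $\mathcal{M}$ with $\Phi_W$ bounded above must be norm-bounded.
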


Moreover, we also have the following result
\begin{corollary} \label{LIM2}
	If $(u_{n})$ is a (PS) sequence for $\Phi_{W}$, then $(u_{n})$ is bounded. In addition, if $u_{n}\rightharpoonup u$ in $H^{1}(\R^{N})$, then $u$ is a solution of $(\ref{problema}).$
\end{corollary}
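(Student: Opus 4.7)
The corollary has two parts. The first is a direct consequence of Lemma \ref{limitacaosequenciapsn2}; the second is a standard weak-limit argument whose nonlinear ingredient has already been packaged into Lemma \ref{LX}.

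\emph{Boundedness.} Let $(u_n)$ be a Palais--Smale sequence for $\Phi_W$, so that $\Phi_W(u_n)$ is bounded and $\varepsilon_n := \|\Phi_W'(u_n)\|_{(H^1(\R^2))^{*}} \to 0$. Choosing $d>0$ larger than $\sup_n|\Phi_W(u_n)|$ and than $\sup_n \varepsilon_n$, I obtain
$$
\Phi_W(u_n) \leq d, \qquad \pm \Phi_W'(u_n) u_n^{\pm} \leq \varepsilon_n \|u_n^{\pm}\| \leq d \|u_n\|, \qquad -\Phi_W'(u_n) u_n \leq \varepsilon_n \|u_n\| \leq d \|u_n\|,
$$
so the hypotheses of Lemma \ref{limitacaosequenciapsn2} are satisfied (up to the harmless adjustment of the third inequality, handled by the same $o(\|u_n\|)$ term as in the proof of that lemma). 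The lemma delivers both the $H^1(\R^2)$-boundedness of $(u_n)$ and the $L^1(\R^2)$-boundedness of $(f(x,u_n)u_n)$; this second piece of information is what drives the next step.

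\emph{Weak limits are critical points.} Assume $u_n \rightharpoonup u$ in $H^{1}(\R^{2})$. Passing to a subsequence, $u_n \to u$ a.e.\ in $\R^{2}$ and $u_n \to u$ in $L^{p}_{\mathrm{loc}}(\R^{2})$ for every $p \in [1,+\infty)$. Since $\Phi_W \in C^{1}(H^{1}(\R^{2}),\R)$ and $C_{0}^{\infty}(\R^{2})$ is dense in $H^{1}(\R^{2})$, it is enough to verify
$$
\Phi_W'(u)\phi \;=\; B(u,\phi) \;-\; \int W(x)\, u \phi \, dx \;-\; \int f(x,u)\phi \, dx \;=\; 0
$$
for every $\phi \in C_{0}^{\infty}(\R^{2})$. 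I treat the three pieces of $\Phi_W'(u_n)\phi$ separately. Weak convergence in $H^{1}$ gives $B(u_n,\phi) \to B(u,\phi)$. Because $W$ is bounded by $(W_{2})$ and $\phi$ has compact support, $L^{2}_{\mathrm{loc}}$-convergence yields $\int W u_n \phi \, dx \to \int W u \phi \, dx$. For the nonlinear term, the $L^{1}$-bound on $(f(x,u_n)u_n)$ produced in the previous step, together with the pointwise a.e.\ convergence $u_n(x) \to u(x)$, is exactly the hypothesis of Lemma \ref{LX}, which supplies
$$
\int f(x,u_n)\phi \, dx \;\longrightarrow\; \int f(x,u)\phi \, dx.
$$
Combining these three convergences with $\Phi_W'(u_n)\phi \to 0$ proves $\Phi_W'(u)\phi = 0$ on $C_{0}^{\infty}(\R^{2})$, and density promotes this to $\Phi_W'(u) = 0$ in $(H^{1}(\R^{2}))^{*}$.

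\emph{Where the genuine work lies.} All the real analytic content --- namely, handling the exponential critical nonlinearity when passing to the limit --- has been isolated inside Lemma \ref{limitacaosequenciapsn2} (which extracts the $L^{1}$-bound on $f(x,u_n)u_n$ from the Ambrosetti--Rabinowitz-type condition $(f_{4})$) and Lemma \ref{LX} (which upgrades pointwise convergence plus that $L^{1}$-bound into convergence against test functions). With those two lemmas available, the corollary follows essentially by bookkeeping; no new Trudinger--Moser estimate is needed here.
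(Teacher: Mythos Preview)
Your proof is correct and matches the paper's own argument, which is the one-line ``follows applying Lemmas \ref{lema 1.2} and \ref{LX}''; you have simply unpacked that sentence. Your remark about the third hypothesis of Lemma \ref{limitacaosequenciapsn2} is accurate: for a genuine $(PS)$ sequence one only has $-\Phi_W'(u_n)u_n \le \varepsilon_n\|u_n\|$ rather than a fixed constant bound, but rerunning the proof of that lemma with this $o(\|u_n\|)$ term yields $(1-\Theta/\overline{\Lambda}-C\varepsilon_n)\|u_n\|^{2}\le C'\|u_n\|$, which still forces boundedness once $\varepsilon_n$ is small, exactly as you indicate.
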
	
\begin{proof} The corollary  follows applying the Lemmas \ref{lema 1.2} and \ref{LX}.    
\end{proof}	

\vspace{0.5 cm}

As in Section 2, the Lemma \ref{funcaomn2} permits to define a function 
$$
m:E^{+}\setminus\{0\}\rightarrow\M\ \text{ where }\ m(u)\in\hat{E}(u)\cap\M\ \ \forall u\in E^{+}\setminus\{0\}.
$$

Now, we invite the reader to observe that the same approach used in Section 2 works to guarantee that the proposition below holds

\begin{proposition}\label{pssequencen2}
	There exists a (PS)$_{\gamma_W}$ sequence for $\Phi_{W}$.
\end{proposition}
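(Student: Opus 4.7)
The plan is to reduce the existence of a $(PS)_{\gamma_W}$ sequence to a minimization problem on a smooth manifold and invoke Ekeland's variational principle. First I would establish the $N=2$ analogs of Lemma \ref{funcaomcontinua}, Lemma \ref{funcaopsidiferenciavel} and Corollary \ref{corolariosequenciasps}: namely, that the map $m:E^{+}\setminus\{0\}\to\M$ is continuous, that the composition $\hat{\Psi}(u):=\Phi_W(m(u))$ belongs to $C^1(E^+\setminus\{0\},\R)$ with
$$
\hat{\Psi}'(y)z=\frac{\|m(y)^+\|}{\|y\|}\Phi_W'(m(y))z,\qquad \forall\, y,z\in E^+,\; y\neq 0,
$$
and that, writing $\Psi$ for the restriction of $\hat{\Psi}$ to the unit sphere $S^+:=\{u\in E^+:\|u\|=1\}$, a sequence $(w_n)\subset S^+$ is a $(PS)_c$ sequence for $\Psi$ if and only if $(m(w_n))$ is a $(PS)_c$ sequence for $\Phi_W$. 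The proofs carry over almost verbatim from Section 2; the only substantive change is replacing the subcritical Sobolev estimates by the Trudinger--Moser inequality (Lemma \ref{lema 1.2}), Lemma \ref{lema 1.3} and Lemma \ref{LX}, which provide continuity of the Nemytskii operator associated with $f$ on bounded sets of $H^1(\R^2)$.

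Next I would identify $\inf_{S^+}\Psi$ with $\gamma_W$. For each $u\in S^+$ one has $m(u)\in\M$, so $\Psi(u)=\Phi_W(m(u))\geq\gamma_W$. Conversely, given $v\in\M$, Lemma \ref{cpositivon2} yields $v^+\neq 0$, so $u:=v^+/\|v^+\|$ lies in $S^+$, and the uniqueness part of Lemma \ref{funcaomn2} gives $m(u)=v$, whence $\Psi(u)=\Phi_W(v)$. Taking the infimum gives $\inf_{S^+}\Psi=\gamma_W$, which is strictly positive by Lemma \ref{cpositivon2}.

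Since $S^+$ is a complete $C^1$-Finsler submanifold of $E^+$ and $\Psi$ is bounded below on $S^+$ and of class $C^1$, Ekeland's variational principle produces a sequence $(w_n)\subset S^+$ with $\Psi(w_n)\to \gamma_W$ and $\|\Psi'(w_n)\|_{T^*_{w_n}S^+}\to 0$. Setting $u_n:=m(w_n)$, the equivalence from the adapted Corollary \ref{corolariosequenciasps} shows that $(u_n)\subset\M$ is a $(PS)_{\gamma_W}$ sequence for $\Phi_W$.

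The main obstacle is the $C^1$ character of $\hat{\Psi}$ and the $(PS)$-equivalence in the exponential-critical setting, since both require differentiating the implicit relations $\Phi_W'(m(y))m(y)=0$ and $\Phi_W'(m(y))\big|_{E^-}=0$ defining $m$. This is handled by combining the implicit function theorem with the boundedness of $m$ on compact subsets of $E^+\setminus\{0\}$ coming from Corollary \ref{corolariocoercividadeemMn2}, together with the uniform exponential bounds given by Lemma \ref{lema 1.2} and Lemma \ref{lema 1.3}, which guarantee that the relevant Nemytskii operators induced by $f$ and $f_t$ are of class $C^0$ on bounded sets and hence that the differential of $m$ exists and depends continuously on $y$.
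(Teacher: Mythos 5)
Your high-level plan matches the paper's exactly: reduce to the restriction $\Psi=\hat\Psi|_{S^+}$ via the map $m$, establish $\inf_{S^+}\Psi=\gamma_W>0$, apply Ekeland's variational principle on the sphere $S^+$, and transfer the resulting $(PS)_{\gamma_W}$ sequence back to $\M$ through the equivalence in part (b) of Corollary \ref{corolariosequenciasps}. This is precisely the Szulkin--Weth scheme that the paper invokes when it writes that ``the same approach used in Section 2 works.''

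However, your closing paragraph contains a step that would fail under the paper's hypotheses. You propose to obtain the $C^1$ regularity of $\hat\Psi$ by applying the implicit function theorem to the relations $\Phi_W'(m(y))m(y)=0$ and $\Phi_W'(m(y))\big|_{E^-}=0$, invoking continuity of a Nemytskii operator built from $f_t$. But for $N=2$ the nonlinearity $f$ is only assumed continuous (conditions $(f_1)$--$(f_6)$); condition $(f_3)$ gives monotonicity of $t\mapsto f(x,t)/t$, not differentiability, so $f_t$ need not exist and the map $m$ is in general merely continuous, not $C^1$. The whole point of Szulkin--Weth's argument, which the paper cites for Lemma \ref{funcaopsidiferenciavel}, is to show that $\hat\Psi=\Phi_W\circ m$ is $C^1$ \emph{without} differentiating $m$ or $f$: one computes the Gateaux derivative of $\hat\Psi$ directly, squeezing the difference quotient $t^{-1}\left(\hat\Psi(y+tz)-\hat\Psi(y)\right)$ from above and below by using that $m(y)$ and $m(y+tz)$ are the global maxima of $\Phi_W$ on $\hat E(y)$ and $\hat E(y+tz)$ respectively; both bounds converge to $\frac{\|m(y)^+\|}{\|y\|}\Phi_W'(m(y))z$, and continuity of $m$ then yields continuity of the derivative. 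If you replace the implicit-function-theorem argument by this direct computation (as in \cite{SW}), the rest of your proposal, including the identification $\inf_{S^+}\Psi=\gamma_W$ and the Ekeland step, is correct and coincides with the paper's intended proof.
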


Our next proposition is crucial when $f$ has an exponential critical growth.  

\begin{proposition}\label{controlec} Fixed $\widetilde{A} \in (0,1/a)$ , there is $\lambda^*>0$  such that $\gamma_W<\frac{\widetilde{A}^{2}}{2}$ for $\inf_{\mathbb{R}^2}D (x)> \lambda^*$, where $a$ was given in (\ref{equivalente}).
\end{proposition}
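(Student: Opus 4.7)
The strategy mirrors the second half of Proposition \ref{limitacaoc} (the $N=3$ sub-case). The idea is to control $\gamma_W$ from above by $\sup_{\hat{E}(u_0)}\Phi$ for a \emph{fixed} test direction $u_0\in E^+\setminus\{0\}$ chosen independently of $\lambda:=\inf_{\R^2}D(x)$, and then show that the exponent $q>2$ from hypothesis $(f_6)$ forces this supremum to vanish as $\lambda\to+\infty$.

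First I would observe that $W>0$ by $(W_2)$ and $f^{*}\geq 0$ by $(f_1)$, so integrating yields $F(x,t)\geq F_{0}(x,t)$ and consequently
$$\Phi_{W}(w)\leq \Phi(w),\qquad \forall\,w\in H^{1}(\R^{2}).$$
Fix $u_0\in E^{+}\setminus\{0\}$. Lemma \ref{funcaomn2} gives
$$\gamma_{W}\leq \Phi_{W}(m(u_0))=\sup_{\hat{E}(u_0)}\Phi_{W}\leq \sup_{\hat{E}(u_0)}\Phi.$$

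Next I would transfer Lemma \ref{lemares} to the functional $\Phi$: the proof only uses Lemma \ref{lemacompacton2}, which in turn rests on the lower bound $F_0(x,t)\geq \lambda|t|^{q}$ from $(f_6)$ (applied to $\Phi$ in place of $\Phi_W$). This yields constants $r,s_{0}>0$ with
$$\sup_{\hat{E}(u_0)}\Phi=\sup\bigl\{\Phi(su_0+v)\,:\,s\geq s_0,\;v\in E^{-},\;\|su_0+v\|\leq r\bigr\}.$$
On this range I discard the $-\tfrac{1}{2}\|v\|^{2}$ term and use $(f_6)$ directly to obtain
$$\Phi(su_0+v)\;\leq\;\tfrac{s^{2}}{2}\|u_0\|^{2}-\lambda\int|su_0+v|^{q}\,dx.$$

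Now I invoke Lemma \ref{projecaolp} with $p=q>2$, obtaining $\xi>0$ such that $|su_0+v|_{q}\geq \xi\,s\,|u_0|_{q}$ for every admissible pair $(s,v)$. This bounds the previous quantity by $As^{2}-\lambda Bs^{q}$, where $A=\tfrac{1}{2}\|u_0\|^{2}$ and $B=\xi^{q}|u_0|_{q}^{q}>0$. Elementary one-variable calculus gives
$$\sup_{\hat{E}(u_0)}\Phi\;\leq\;\max_{s\geq 0}(As^{2}-\lambda Bs^{q})\;=\;C(u_0)\,\lambda^{-\frac{2}{q-2}},$$
which tends to $0$ as $\lambda\to+\infty$. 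Choosing $\lambda^{*}>0$ so large that $C(u_0)\lambda^{-2/(q-2)}<\widetilde{A}^{2}/2$ whenever $\lambda\geq\lambda^{*}$ yields the claim.

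The only non-routine point is checking that Lemma \ref{lemares} and its prerequisite Lemma \ref{lemacompacton2} apply to $\Phi$ (not just $\Phi_{W}$); this is harmless because their proofs use only the polynomial lower bound supplied by $(f_6)$, which bypasses the exponential critical growth entirely. Once that is noted, the rest is the same calculus maximisation as in the $N=3$ sub-case of Proposition \ref{limitacaoc}.
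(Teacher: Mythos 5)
Your proof is correct and follows essentially the same route as the paper: bound $\gamma_W$ by $\sup_{\hat{E}(u_0)}\Phi_W$ for a fixed $u_0\in E^+\setminus\{0\}$, pass to the restricted range $s\geq s_0$, $\|su_0+v\|\leq r$ via the $N=2$ analogue of Lemma~\ref{lemares}, discard the nonpositive terms and lower-bound $F$ by $\lambda|t|^q$ using $(f_6)$ and $f^*\geq 0$, then apply Lemma~\ref{projecaolp} with $p=q$ to reduce to the one-variable maximisation of $As^2-\lambda Bs^q$, which scales like $\lambda^{-2/(q-2)}\to 0$. The only (cosmetic) differences are that you pass through $\Phi$ first whereas the paper works directly with $\Phi_W$, and you explicitly flag that Lemmas~\ref{lemares} and~\ref{lemacompacton2} transfer to the $N=2$ setting, a point the paper leaves implicit.
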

\begin{proof}
Let $u\in E^{+}$ with $u \not=0$ and set 
$$
h_{D}(s):=As^{2}-\lambda Bs^{q}, 
$$ 
where 
$$
\lambda= \inf_{x \in \mathbb{R}^2}D(x), \; \;\; A=\frac{1}{2}||u||^{2} \quad \mbox{and} \quad B=\xi\int|u|^{q}dx,
$$ 
with $\xi$ given in Lemma \ref{projecaolp}. Then, a straightforward computation leads to  
$$
\max_{s \geq 0}h_{D}(s)=\left(A-\frac{2A}{q}\right)\left(\sqrt[q-2]{\frac{2A}{qB\lambda}}\right)^{2}.
$$
Thereby, by $(f_6)$ and Lemma \ref{projecaolp},
$$
\begin{array}{l}
c\leq\sup_{\scriptsize\begin{array}{c}  s\in[0,+\infty) \\ v\in E^{-} \end{array}}\Phi_{W}(su+v)=\sup_{\scriptsize\begin{array}{c}  ||su+v||\leq r \\ s\geq s_{0}, v\in E^{-} \end{array}}\Phi_{W}(su+v) \\
\leq\sup_{\scriptsize\begin{array}{c}  ||su+v||\leq r \\ s\geq s_{0}, v\in E^{-} \end{array}}\left[\frac{1}{2}s^{2}||u||^{2}-\int F(x,su+v)dx\right] \\
\leq \sup_{\scriptsize\begin{array}{c}  ||su+v||\leq r \\ s\geq s_{0}, v\in E^{-} \end{array}}\left[\frac{1}{2}s^{2}||u||^{2}-\lambda \int|su+v|^{q}dx\right]\\
\leq\sup_{\scriptsize\begin{array}{c}  ||su+v||\leq r \\ s\geq s_{0}, v\in E^{-} \end{array}}\left[\frac{1}{2}s^{2}||u||^{2}-\lambda \xi s^{q}\int|u|^{q}dx\right]\\
=\sup_{\scriptsize\begin{array}{c}  ||su+v||\leq r \\ s\geq s_{0}, v\in E^{-} \end{array}}h_{D}(s)  \\
\leq \max_{s \geq 0}h_{D}(s)= \left(A-\frac{2A}{q}\right)\left(\sqrt[q-2]{\frac{2A}{qB\lambda}}\right)^{2}.
\end{array}
$$
From the last inequality there is $\lambda^*>0$ such that 
$$
\left(A-\frac{2A}{q}\right)\left(\sqrt[q-2]{\frac{2A}{qB\lambda}}\right)^{2} < \frac{\widetilde{A}^2}{2}, \quad \forall \lambda \geq \lambda^*,
$$
finishing the proof. 
\end{proof}

\begin{proposition} \label{LIONS}
Fix $\inf_{x \in\R^2}D(x) \geq \lambda^*$ and $r>0$. Then, there exist a sequence $(y_{n}) \subset \R^{2}$ and $\eta>0$ such that 
$$
\int_{B_{r}(y_{n})}|u_{n}^{+}|^{2}dx\geq\eta>0,\ \ \ \forall n\in\N.
$$
Moreover, increasing $r$ if necessary, the sequence $(y_n)$ can be chosen in $\Z^2$.

\end{proposition}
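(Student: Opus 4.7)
The plan is to argue by contradiction, using the $(PS)_{\gamma_W}$ sequence $(u_n) \subset \M$ that is produced by Proposition \ref{pssequencen2} (and the analogue of Corollary \ref{corolariosequenciasps}-(b) for $N=2$). By Lemma \ref{cpositivon2} we have $\|u_n^+\|^2 \geq 2\gamma_W > 0$, and by Lemma \ref{limitacaosequenciapsn2} the sequence is bounded in $H^{1}(\R^{2})$. Suppose, for contradiction, that the conclusion fails: for the given $r>0$,
\begin{equation*}
\sup_{y \in \R^{2}} \int_{B_{r}(y)} |u_n^+|^{2}\,dx \longrightarrow 0.
\end{equation*}
Covering any fixed ball $B_R(0)$ by finitely many balls of radius $r$ gives $u_n^+ \to 0$ in $L^{2}_{loc}(\R^{2})$; combined with $H^{1}$-boundedness and the Lions-type result in \cite[Lemma 2.1]{RWW}, this upgrades via interpolation to $u_n^+ \to 0$ in $L^{p}(\R^{2})$ for every $p \in (2,\infty)$. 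The goal is to contradict $\|u_n^+\|^{2} \geq 2\gamma_W$ by showing $\|u_n^+\| \to 0$.

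To this end, test $\Phi_W'(u_n)$ against $u_n^+$ to get
\begin{equation*}
\|u_n^+\|^{2} \;=\; \Phi_W'(u_n) u_n^+ \;+\; \int W(x)\,u_n\,u_n^+\,dx \;+\; \int f(x,u_n)\,u_n^+\,dx.
\end{equation*}
The first term is $o(1)$ because $\Phi_W'(u_n) \to 0$ in $(H^{1})^{*}$ and $(u_n^+)$ is bounded. For the $W$-integral, given $\epsilon>0$ choose $R>0$ so that $|W(x)|<\epsilon$ for $|x|>R$ by $(W_1)$; on $B_R$ apply Cauchy--Schwarz using $u_n^+ \to 0$ in $L^{2}(B_R)$, and outside $B_R$ bound the integral by $\epsilon\|u_n\|_{2}\|u_n^+\|_{2} \leq C\epsilon$, so this term also tends to $0$.

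The main obstacle is the nonlinear integral $\int f(x,u_n)u_n^+\,dx$, where the exponential critical growth forces a careful use of Trudinger--Moser. By $(f_2)$ and $(f_5)$, for any $p>2$ and $\epsilon>0$ one has $|f(x,t)| \leq \epsilon|t| + C_\epsilon |t|^{p-1}(e^{4\pi t^{2}}-1)$. The linear part contributes $O(\epsilon)$; for the nonlinear part, apply Hölder with three exponents $\sigma_1,\sigma_2,\sigma_3$ satisfying $1/\sigma_1+1/\sigma_2+1/\sigma_3=1$ and $\sigma_2\in(2,\infty)$, obtaining
\begin{equation*}
\int |u_n|^{p-1}|u_n^+|(e^{4\pi u_n^{2}}-1)\,dx \;\leq\; |u_n|_{\sigma_1(p-1)}^{p-1}\,|u_n^+|_{\sigma_2}\,\Bigl(\int (e^{4\pi u_n^{2}}-1)^{\sigma_3}\,dx\Bigr)^{1/\sigma_3}.
\end{equation*}
By Lemma \ref{lema 1.3}, dominate $(e^{4\pi u_n^{2}}-1)^{\sigma_3}$ by $C(e^{\beta\cdot 4\pi u_n^{2}}-1)$ for some $\beta>\sigma_3$; Lemma \ref{lema 1.2} makes this term bounded as soon as $\beta \|u_n\|_{H^{1}}^{2} \leq 1$. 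Precisely here the hypothesis $\inf_{\R^{2}} D \geq \lambda^{*}$ is used: Proposition \ref{controlec} delivers $\gamma_W < \widetilde{A}^{2}/2$ with $\widetilde{A}<1/\sqrt{a}$, which via (\ref{equivalente}) and the quantitative form of Lemma \ref{limitacaosequenciapsn2} yields $\|u_n\|_{H^{1}}^{2} < 1$ uniformly, so taking $\sigma_3$ close to $1$ secures the Trudinger--Moser bound. Since $|u_n^+|_{\sigma_2} \to 0$, the entire integral vanishes in the limit.

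Collecting the three estimates gives $\|u_n^+\|^{2} \to 0$, contradicting $\|u_n^+\|^{2}\geq 2\gamma_W>0$. Finally, to replace $(y_n) \subset \R^{2}$ by a sequence in $\Z^{2}$: each $y_n$ lies within distance $\sqrt{2}/2$ of some lattice point $y_n' \in \Z^{2}$, so $B_{r}(y_n) \subset B_{r+\sqrt{2}}(y_n')$, and replacing $r$ by $r+\sqrt{2}$ furnishes the desired $\Z^{2}$-valued sequence. The hard part of the whole argument is the last paragraph, namely extracting from Proposition \ref{controlec} a uniform bound on $\|u_n\|_{H^{1}}$ strictly below the Trudinger--Moser threshold so that the exponential factor is tame.
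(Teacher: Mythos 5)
Your proposal identifies the right contradiction setup (Lions' lemma, $u_n^+\to 0$ in $L^p(\R^2)$ for $p>2$) and correctly flags where the real difficulty lies, but the claimed resolution of that difficulty does not hold up, and this is a genuine gap. You assert that Proposition~\ref{controlec}, via (\ref{equivalente}) and ``the quantitative form of Lemma~\ref{limitacaosequenciapsn2},'' yields $\|u_n\|_{H^1(\R^2)}^2<1$ uniformly. That is not what Lemma~\ref{limitacaosequenciapsn2} gives. Its proof produces a bound of the form $\bigl(1-\Theta/\overline{\Lambda}\bigr)\|u_n\|^2\le 2\widetilde{C}\|u_n\|$, where $\widetilde{C}$ absorbs the Trudinger--Moser constant $T$, the constant $\Gamma$ from $(f_5)$, and the bound $2d$ on $\int f(x,u_n)u_n\,dx$; there is no mechanism by which making $\gamma_W$ small forces $\widetilde{C}$ small, so the resulting bound on $\|u_n\|$ can perfectly well exceed $1/\sqrt{a}$. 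Consequently the exponent $\beta\|u_n\|_{H^1}^2$ in your Lemma~\ref{lema 1.2} application need not stay below $4\pi$, and the three-exponent H\"older estimate for $\int f(x,u_n)u_n^+\,dx$ does not close. Since $\int f(x,u_n)u_n^+\,dx$ need not vanish, $\|u_n^+\|^2\to 0$ does not follow, and the contradiction collapses.

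The paper's proof avoids this obstacle entirely, and the device it uses is the idea your argument is missing. Rather than estimating $\int f(x,u_n)u_n^+\,dx$ with $u_n$ itself (whose $H^1$-norm you cannot control below the threshold), it introduces the renormalized element $w_n:=\widetilde{A}\,u_n^+/\|u_n^+\|\in\widehat{E}(u_n)$, for which $\|w_n\|_{H^1(\R^2)}\le\widetilde{A}a<1$ \emph{by construction}, so Trudinger--Moser and \cite[Proposition 2.3]{AOM} apply directly and give $\int F(x,w_n)\,dx\to 0$. The crucial structural input is then Lemma~\ref{funcaomn2}: because $u_n\in\M$ is the global maximizer of $\Phi_W$ on $\widehat{E}(u_n)$ and $w_n\in\widehat{E}(u_n)$, one has $\Phi_W(u_n)\ge\Phi_W(w_n)\to\widetilde{A}^2/2$, hence $\gamma_W\ge\widetilde{A}^2/2$, contradicting Proposition~\ref{controlec}. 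Your proof never invokes this maximality comparison, and it is precisely what converts the contradiction hypothesis into a statement about a \emph{fixed-size} test function where the exponential term is tame. Without it, the approach you outline cannot be completed.
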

\begin{proof}
Suppose by contradiction that the lemma does not hold for some $r>0$. Then, by a lemma due to Lions \cite{lionsI}, 
$$
u_{n}^{+}\to 0\ \ \text{in }L^{p}(\R^{2}),\ \forall\ p\in(2,+\infty).
$$
Define $w_{n}:=\widetilde{A}\frac{u_{n}^{+}}{||u_{n}^{+}||}$. Since $u_{n}\in\M$ for all $n\in\N$,  from Lemma \ref{cpositivon2} we have $\displaystyle\liminf_{n\in\N}||u_{n}^{+}||>0$, and so,
$$
w_{n}\to 0 \ \text{ in }L^{p}(\R^{2}),\ \forall\ p\in(2,+\infty).
$$
On the other hand, we also know that 
$$
||w_{n}||_{H^{1}(\R^{2})}=\widetilde{A}\frac{||u_{n}^{+}||_{H^{1}(\R^{2})}}{||u_{n}^{+}||}\leq\widetilde{A}a\frac{||u_{n}^{+}||}{||u_{n}^{+}||}=\widetilde{A}a<1.
$$
As $w_{n}\in\widehat{E}(u_{n})$ and $u_{n}\in\M$, we derive that \begin{equation}\label{desigualdadelimite}
\Phi(u_{n})\geq \Phi(w_{n})=\frac{1}{2}\widetilde{A}^{2}-\int F(x,w_{n})dx.
\end{equation}
By \cite[Proposition 2.3]{AOM}, we have $\int F(x,w_{n})dx \to 0$. Therefore, passing to the limit in (\ref{desigualdadelimite}) as $n \to +\infty$, we obtain 
$$
\gamma_W \geq\frac{\widetilde{A}^{2}}{2},
$$ 
which contradicts the Proposition \ref{controlec}.  Thus, there are $(z_n) \subset \R^2$ and $\eta>0$ such that 
$$
\int_{B_{r}(z_{n})}|u_{n}^{+}|^{2}dx\geq\eta>0,\ \ \ \forall n\in\N.
$$
Now, we repeat the same idea explored in Lemma \ref{finalperiodico} to conclude the proof. 
\end{proof}

\subsection{Proof of Theorem \ref{T1}: The case $N =2$. }

As in Section 2, the proof will be divided into two cases, the Periodic Case and  the  Asymptotically Periodic Case. 

\vspace{0.5 cm}

\subsection{Periodic Case}

\begin{proof}
First of all, we recall there is a $(PS)_{\gamma_W}$ sequence $(u_n)$ for $\Phi$ which must be bounded. Thus, there is $u \in H^{1}(\R^2)$ such that for some subsequence  of $(u_n)$, still denoted by itself, we have
$$
u_n \rightharpoonup u \quad \mbox{in} \quad H^1(\R^2)
$$
and
$$
u_n(x) \to u(x) \quad \mbox{a.e. in} \quad \R^2.
$$
Moreover, by Lemma \ref{limitacaosequenciapsn2} the sequence $(f(x,u_n)u_n)$ is bounded in $L^{1}(\R^2)$. Therefore, by  Lemma \ref{LX},
$$
\Phi'(u)\phi=0, \quad \forall \phi \in C^{\infty}_0(\R^2).
$$
If we combine the Lemma \ref{lema 1.2} with the density of $C^{\infty}_0(\R^2)$ in $H^{1}(\R^2)$, we see that  $u$ is a critical point of $\Phi$, that is,
$$
\Phi'(u)v=0, \quad \forall v \in H^1(\R^2).
$$ 
Moreover, by Fatou's Lemma, we also have
$$
\Phi(u) \leq \gamma.
$$
If $u \not=0$, we must have
$$
\Phi(u) \geq \gamma,
$$
showing that $\Phi(u)=\gamma$, and so, $u$ is a ground state solution.

If $u=0$, we can apply Lemma \ref{LIONS} to get a sequence $(y_n) \subset \Z^2$ and real numbers $r, \eta>0$ verifying 
$$
\int_{B_{r}(y_{n})}|u_{n}^{+}|^{2}dx\geq\eta>0,\ \ \ \forall n\in\N.
$$
Setting $v_n(x)=u_n(x+y_n)$, a direct computation gives that $(v_n)$ is also a $(PS)_\gamma$ for $\Phi$.  Moreover, for some subsequence, there is $v \in H^1(\R^2)$ such that  
$$
v_n \rightharpoonup v \quad \mbox{in} \quad H^{1}(\R^2) \quad \mbox{and} \quad \int_{B_{r}(0)}|v^{+}|^{2}dx\geq\eta>0,
$$ 
showing that $v\not=0$. Therefore, arguing as above, $v$ is a ground state solution for $\Phi$. \end{proof}

\subsection{The Asymptotically Periodic Case}

\begin{proof} First of all, we recall that $\Phi_{W}\leq\Phi$, and so, $\gamma_{W}\leq \gamma$. As in Section 2, we will consider the cases $\gamma_W=\gamma$ and $\gamma_W < \gamma$. The first one follows as in Section 2, and we will omit its proof.  

In what follows, we are considering $\gamma_{W}<\gamma $ and $(u_{n})$ be a $(PS)_{\gamma_{W}}$ sequence for $\Phi_{W}$ which was given in Lemma \ref{pssequencen2}. The sequence $(u_{n})$ is bounded by Lemma \ref{limitacaosequenciapsn2}. Thus, there is $u \in H^{1}(\R^2)$ and a subsequence of $(u_n)$, still denoted by itself, such that $u_{n}\rightharpoonup u$ in $H^{1}(\R^2)$. Suppose by contradiction $u=0$. Repeating the arguments explored in the case $N\geq3$, we have 
$$
\int W(x)|u_{n}|^{2}dx \to 0 \;\;\;\; \mbox{and} \;\;\;\; \sup_{\|\psi\|\leq 1}\left|\int W(x) u_{n}\psi dx\right| \to 0.
$$
From $(f_1)$, given $\epsilon>0$ and $\beta>0$ such that 
$$
\beta<\frac{2\pi}{\sup_{n\in\N}||u_{n}||^{2}},
$$ 
it must  exist  $\eta>0$ satisfying  
$$
|f^{*}(x,s)|\leq\epsilon(e^{\beta s^{2}}-1) \;\; \forall x\in\R^{2}\setminus B_{\eta}(0). 
$$
Therefore, by Lemma \ref{lema 1.2}
$$
\begin{array}{l}
\int_{|x|\geq\eta}|f^{*}(x,u_{n})||\psi|dx\leq \int_{|x|\geq\eta}\epsilon|e^{\beta u_{n}^{2}}-1||\psi|dx\leq \\
\leq\epsilon\left(\int_{|x|\geq\eta}|e^{\beta u_{n}^{2}}-1|^{2}dx\right)^{1/2}\left(\int_{|x|\geq\eta}|\psi|^{2}dx\right)^{1/2}dx\leq\epsilon K||\psi||_{H^{1}(\R^{2})},
\end{array}
$$
leading to  
$$
\sup_{\|\psi\| \leq 1}\left|\int f^{*}(x,u_{n})\psi  \, dx\right| \to 0.
$$
A similar argument works to prove that 
$$
0 \leq \int F^{*}(x,u_{n})dx\leq\int f^{*}(x,u_{n})u_{n}dx \to 0.
$$
The above limits yield
$$
\Phi(u_{n}) \to \gamma_{W} \;\;\; \mbox{and} \;\;\; ||\Phi'(u_{n})|| \to 0. 
$$
Arguing as in the periodic case, without loss of generality, we can assume that
$$
u_n \rightharpoonup u \quad \mbox{in} \quad H^1(\R^2), u \not=0 \;\; \mbox{and} \;\; \Phi'(u)=0.
$$
Thus, $\Phi(u) \geq \gamma$. On the other hand, by Fatou's Lemma,
$$
\Phi(u) \leq \liminf_{n\to +\infty}\Phi(u_n)=\gamma_W,
$$
which is absurd, because we are supposing $\gamma_W < \gamma$. Thereby, $u \not=0$ and since $(f(x,u_n)u_n)$ is bounded in $L^{1}(\R^2)$, we can conclude that $u$ is a ground state solution of $\Phi_W$. 
\end{proof}


\begin{thebibliography}{99}

\bibitem{Alves} C.O. Alves, {\it Exist\^encia  de solu\c c\~ao positiva  de equa\c c\~oes  el\'ipticas  n\~ao-lineares  variacionais  em
$\mathbb{R}^N$}, Doct. dissertation, UnB, 1996.


\bibitem{AOM} C.O. Alves, J.M. B. do \'O and O.H. Miyagaki. {\it On nonlinear pertubations of a periodic elliptic problem in $\R^{2}$ involving critical growth.} Nonlinear Analysis 45 (2001) 849-863.


\bibitem{AL}   C.O. Alves and L.R. de Freitas,  { \it Multiplicity of nonradial solutions for a class of quasilinear equations on annulus with exponential critical growth}, Topol. Methods in Nonlinear Anal. 39 (2012), 243-262.



\bibitem{Cao} D.M.  Cao, { \it Nontrivial solution of semilinear elliptic equation with critical exponent in $\mathbb{R}^2$}, Comm. Partial Differential Equations 17 (1992), 407-435.

\bibitem{CS} J. Chabrowski and A. Szulkin, {\it On a semilinear Schr\"{o}dinger equation with critical Sobolev exponent.} Proc. Amer.Math. Soc. 130 (2001) 85?93.

\bibitem{CT} D. Cassani  and C. Tarsi,  {\it A Moser-type inequality in Lorentz-Sobolev spaces for unbounded domains in $\R^N$}, Asymptotic Anal. \textbf{64}, 29-51 (2009).


\bibitem{djairo}D.G. de Figueiredo, O.H.   Miyagaki and B. Ruf, {  Elliptic equations in $\mathbb{R}^2$ with nonlinearities in the critical growth range}, Calc. Var. Partial Differential Equations \textbf{3}, 139-153 (1995).

\bibitem{DORUF} J. M. B. do \'O and  B. Ruf. {\it On a Schr\"{o}dinger equation with periodic potential and critical growth in $\mathbb{R}^2$ }. Nonlinear Differential Equations Appl. 13 (2006) 167-192.


\bibitem{furtado} M.F. Furtado and R. Marchi, {\it Existence of solutions to asymptotically periodic Schr\"{o}dinger equations}, Electronic Journal of Differential Equations, Vol. 2017 (2017), No. 15, 1-7.


\bibitem{masmoudi} S. Ibrahim, N. Masmoudi and K. Nakanishi, {\it Trudinger-Moser inequality on the whole plane with the exact growth condition}, J. Eur. Math. Soc. \textbf{17}(4)(2015), 819-835 .

\bibitem{ish} M. Ishiwata, {\it  Existence and nonexistence of maximizers for variational problems associated with Trudinger-Moser type inequalities in $\R^N$}, Mathematische Annalen \textbf{351}(2011), 781-804 .

\bibitem{Jeanjean}  L. Jeanjean, {\it Solutions in spectral gaps for a nonlinear equation of Schr\"{o}dinger type,} J. Differential Equations 112 (1994), 53-80.

\bibitem{Szulkin1} W. Kryszewski and A. Szulkin, \textit{Generalized linking theorem with an application
	to a semilinear Schr\"{o}dinger equation}. Adv. Differential Equations
3 (1998),  441-472.


\bibitem{lionsI}  P.L. Lions, {\it The concentration-compactness principle in the calculus of variations. The locally compact case. Part I.},  Ann. Inst. Henri Poincar\'e, Anal. Non Lin\'eaire 1 (1984),  109-145.

\bibitem{lionsII} P. L. Lions, {\it The concentration-compactness principle in the calculus of variations. The
limit case,} Rev. Mat. Iberoamericana 1 (1985), 145-201.

\bibitem{LiSzulkin} G.B Li  and A. Szulkin, {\it An asymptotically periodic Schr\"{o}dinger equation with indefinite linear part.} Commun Contemp Math, 2002, 4: 763-776


\bibitem{lin} X. Lin and X. Tang, {\it An asymptotically periodic and asymptotically linear Schr\"{o}dinger equation with indefinite linear part}, Computers \& Mathematics with Applications 70 (2015), 726-736.  

\bibitem{MS}  N. Masmoudi and F. Sani,{ \it Trudinger-Moser Inequalities with the Exact Growth Condition in $\R^N$ and Applications}, J. Comm. in Partial Diff. Eqns \textbf{40}(2015), 1404-1440 .


\bibitem{moser}J.  Moser, { \it A sharp form of an inequality by N. Trudinger}, Indiana Univ. Math. J. 20 (1971), 1077-1092.

\bibitem{Pankov} A. Pankov, {\it Periodic nonlinear Schr\"{o}dinger equation with application to photonic crystals,} Milan J. Math. 73 (2005) 259-287.

\bibitem{Pankov-Pfluger} A. Pankov and K. Pfl\"uger, \textit{On a semilinear Schr\"{o}dinger
equation with periodic potential}, Nonl. Anal. TMA 33 (1998), 593-609.


\bibitem{Rabinowitz} P.H. Rabinowitz, {\it Minimax methods in critical point theory with applications to differential equations. AMS Reg. Conf. Ser.  Math. 65 (1986)  }

\bibitem {RWW} M. Ramos, Z-Q. Wang and M. Willem, {\it Positive solutions for elliptic equations with critical growth in unbounded domains,} Calculus of Variations and Differential Equations, Chapman \& Hall/CRC Press, Boca Raton, 2000, pp. 192-199.

\bibitem{Ruf} B. Ruf, {\it A sharp Trudinger-Moser type inequality for unbounded domains in $\mathbb{R}^{2}$}, Journal of Functional Analysis 219 (2005) 340-367.


\bibitem{SW}  A. Szulkin and T. Weth, {\it Ground state solutions for some indefinite variational problems,} J. Funct. Anal. 257 (2009) 3802-3822

\bibitem {SW2} A. Szulkin and T. Weth, {\it The method of Nehari manifold.} Handb. Nonconvex Anal. Appl. 2010; 597-632.

\bibitem{Schechter} M. Schechter, {\it Linking Methods in Critical Point Theory,} Birkh\"auser, Boston, 1999.

\bibitem{Schechter3} M. Schechter, {\it Nonlinear Schr\"{o}dinger operators with zero in the spectrum,} Z. Angew. Math. Phys. 66 (2015), no. 2125-2141.


\bibitem{Schechter2} M. Schechter and W. Zou, {\it Weak linking theorems and Schr\"{o}dinger equations with critical Sobolev exponent,} ESAIM Control Optim. Calc. Var. 9 (2003) 601-619 (electronic)



\bibitem{tang} X.H. Tang,{\it \,  Non-Nehari manifold method for asymptotically periodic Schr\"{o}dinger equations}, Science China Mathematics 58 (2015), 715-728.

\bibitem{tang2} X.H. Tang,{\it \, New super-quadratic conditions for asymptotically periodic Schr\"{o}dinger equation}, Preprint

\bibitem{trudinger}N.   Trudinger, { \it On imbedding into Orlicz space and some applications}, J. Math. Mech. 17 (1967), 473-484.


\bibitem{W}M. Willem, {\it Minimax Theorems,} Birkhauser, 1996.

\bibitem{WillemZou} M. Willem and W. Zou, {\it On a Schr\"{o}dinger equation with periodic potential and spectrum point zero,} Indiana Univ. Math. J. 52 (2003), 109-132

\bibitem{Yang} M. Yang, {\it Ground state solutions for a periodic Schr\"{o}dinger equation with superlinear nonlinearities,} Nonlinear Anal. 72 (5) (2010) 2620-2627.


\bibitem{ZXZ} H. Zhang, J. Xu and F. Zhang, {\it Ground state solutions asymptotically periodic Schr\"{o}dinger equations with indefinite linear part}, Mathematical Methods in the Applied Sciences 38 (2015), 113-122.

\bibitem{ZXZ2} H. Zhang, J. Xu and F. Zhang, {\it On a class of semilinear Schr\"{o}dinger equation with indefinite linear part}, J. Math. Anal. Appl. 414 (2014), 710-724.




	



	
		
\end{thebibliography}
\end{document}